
\documentclass[a4paper,11pt]{amsart}
\usepackage[left=2.7cm,right=2.7cm,top=3.5cm,bottom=3cm]{geometry}
\usepackage{amsthm,amssymb,amsmath,amsfonts,mathrsfs,amscd}
\usepackage[latin1]{inputenc}
\usepackage[all]{xy}
\usepackage{latexsym}
\usepackage{longtable}

\setcounter{MaxMatrixCols}{10}

\newcommand{\Q}{\mathbb{Q}}

\newcommand{\qbar }{{\bar \Q }}

\newfont{\gotip}{eufb10 at 12pt}

\DeclareMathOperator{\Hom}{Hom}

\theoremstyle{plain}
\newtheorem{theorem}{Theorem}[section]
\newtheorem{corollary}[theorem]{Corollary}
\newtheorem{lemma}[theorem]{Lemma}
\newtheorem{proposition}[theorem]{Proposition}
\newtheorem{conjecture}[theorem]{Conjecture}
\newtheorem{axiom}[theorem]{Proposition-Property}
\theoremstyle{definition}
\newtheorem{definition}[theorem]{Definition}

\newtheorem{examplewr}[theorem]{Examples}

\theoremstyle{remark}
\newtheorem{obswr}[theorem]{Observation}
\newtheorem{remarkwr}[theorem]{Remark}

\newenvironment{remark}{\begin{remarkwr}\begin{upshape}}{\end{upshape}\end{remarkwr}}
\newenvironment{example}{\begin{examplewr}\begin{upshape}}{\end{upshape}\end{examplewr}}

\typeout{TCILATEX Macros for Scientific Word 5.0 <13 Feb 2003>.}
\typeout{NOTICE:  This macro file is NOT proprietary and may be 
freely copied and distributed.}
\makeatletter

\ifx\pdfoutput\relax\let\pdfoutput=\undefined\fi
\newcount\msipdfoutput
\ifx\pdfoutput\undefined
\else
 \ifcase\pdfoutput
 \else 
    \msipdfoutput=1
    \ifx\paperwidth\undefined
    \else
      \ifdim\paperheight=0pt\relax
      \else
        \pdfpageheight\paperheight
      \fi
      \ifdim\paperwidth=0pt\relax
      \else
        \pdfpagewidth\paperwidth
      \fi
    \fi
  \fi  
\fi

%

%
\newcount\@hour\newcount\@minute\chardef\@x10\chardef\@xv60
\def\tcitime{
\def\@time{%
  \@minute\time\@hour\@minute\divide\@hour\@xv
  \ifnum\@hour<\@x 0\fi\the\@hour:%
  \multiply\@hour\@xv\advance\@minute-\@hour
  \ifnum\@minute<\@x 0\fi\the\@minute
  }}%


\def\x@hyperref#1#2#3{%
   \catcode`\~ = 12
   \catcode`\$ = 12
   \catcode`\_ = 12
   \catcode`\# = 12
   \catcode`\& = 12
   \y@hyperref{#1}{#2}{#3}%
}

\def\y@hyperref#1#2#3#4{%
   #2\ref{#4}#3
   \catcode`\~ = 13
   \catcode`\$ = 3
   \catcode`\_ = 8
   \catcode`\# = 6
   \catcode`\& = 4
}

\@ifundefined{hyperref}{\let\hyperref\x@hyperref}{}
\@ifundefined{msihyperref}{\let\msihyperref\x@hyperref}{}

\@ifundefined{qExtProgCall}{\def\qExtProgCall#1#2#3#4#5#6{\relax}}{}
%
%
%
%
\def\QCTOpt[#1]#2{%
  \def\QCTOptB{#1}
  \def\QCTOptA{#2}
}
\def\QCTNOpt#1{%
  \def\QCTOptA{#1}
  \let\QCTOptB\empty
}
\def\Qct{%
  \@ifnextchar[{%
    \QCTOpt}{\QCTNOpt}
}
\def\QCBOpt[#1]#2{%
  \def\QCBOptB{#1}%
  \def\QCBOptA{#2}%
}
\def\QCBNOpt#1{%
  \def\QCBOptA{#1}%
  \let\QCBOptB\empty
}
\def\Qcb{%
  \@ifnextchar[{%
    \QCBOpt}{\QCBNOpt}%
}
\def\PrepCapArgs{%
  \ifx\QCBOptA\empty
    \ifx\QCTOptA\empty
      {}%
    \else
      \ifx\QCTOptB\empty
        {\QCTOptA}%
      \else
        [\QCTOptB]{\QCTOptA}%
      \fi
    \fi
  \else
    \ifx\QCBOptA\empty
      {}%
    \else
      \ifx\QCBOptB\empty
        {\QCBOptA}%
      \else
        [\QCBOptB]{\QCBOptA}%
      \fi
    \fi
  \fi
}
\newcount\GRAPHICSTYPE
\GRAPHICSTYPE=\z@
\def\GRAPHICSPS#1{%
 \ifcase\GRAPHICSTYPE
   \special{ps: #1}%
 \or
   \special{language "PS", include "#1"}%
 \fi
}%
%
%
%

\def\graffile#1#2#3#4{%
    \bgroup
	   \@inlabelfalse
       \leavevmode
       \@ifundefined{bbl@deactivate}{\def~{\string~}}{\activesoff}%
        \raise -#4 \BOXTHEFRAME{%
           \hbox to #2{\raise #3\hbox to #2{\null #1\hfil}}}%
    \egroup
}%
%
\def\draftbox#1#2#3#4{%
 \leavevmode\raise -#4 \hbox{%
  \frame{\rlap{\protect\tiny #1}\hbox to #2%
   {\vrule height#3 width\z@ depth\z@\hfil}%
  }%
 }%
}%
\newcount\@msidraft
\@msidraft=\z@
\let\nographics=\@msidraft
\newif\ifwasdraft
\wasdraftfalse

\def\GRAPHIC#1#2#3#4#5{%
   \ifnum\@msidraft=\@ne\draftbox{#2}{#3}{#4}{#5}%
   \else\graffile{#1}{#3}{#4}{#5}%
   \fi
}
\def\addtoLaTeXparams#1{%
    \edef\LaTeXparams{\LaTeXparams #1}}%
%

\newif\ifBoxFrame \BoxFramefalse
\newif\ifOverFrame \OverFramefalse
\newif\ifUnderFrame \UnderFramefalse

\def\BOXTHEFRAME#1{%
   \hbox{%
      \ifBoxFrame
         \frame{#1}%
      \else
         {#1}%
      \fi
   }%
}

\def\doFRAMEparams#1{\BoxFramefalse\OverFramefalse\UnderFramefalse\readFRAMEparams#1\end}%
\def\readFRAMEparams#1{%
 \ifx#1\end%
  \let\next=\relax
  \else
  \ifx#1i\dispkind=\z@\fi
  \ifx#1d\dispkind=\@ne\fi
  \ifx#1f\dispkind=\tw@\fi
  \ifx#1t\addtoLaTeXparams{t}\fi
  \ifx#1b\addtoLaTeXparams{b}\fi
  \ifx#1p\addtoLaTeXparams{p}\fi
  \ifx#1h\addtoLaTeXparams{h}\fi
  \ifx#1X\BoxFrametrue\fi
  \ifx#1O\OverFrametrue\fi
  \ifx#1U\UnderFrametrue\fi
  \ifx#1w
    \ifnum\@msidraft=1\wasdrafttrue\else\wasdraftfalse\fi
    \@msidraft=\@ne
  \fi
  \let\next=\readFRAMEparams
  \fi
 \next
 }%
%

\def\IFRAME#1#2#3#4#5#6{%
      \bgroup
      \let\QCTOptA\empty
      \let\QCTOptB\empty
      \let\QCBOptA\empty
      \let\QCBOptB\empty
      #6%
      \parindent=0pt
      \leftskip=0pt
      \rightskip=0pt
      \setbox0=\hbox{\QCBOptA}%
      \@tempdima=#1\relax
      \ifOverFrame
          \typeout{This is not implemented yet}%
          \show\HELP
      \else
         \ifdim\wd0>\@tempdima
            \advance\@tempdima by \@tempdima
            \ifdim\wd0 >\@tempdima
               \setbox1 =\vbox{%
                  \unskip\hbox to \@tempdima{\hfill\GRAPHIC{#5}{#4}{#1}{#2}{#3}\hfill}%
                  \unskip\hbox to \@tempdima{\parbox[b]{\@tempdima}{\QCBOptA}}%
               }%
               \wd1=\@tempdima
            \else
               \textwidth=\wd0
               \setbox1 =\vbox{%
                 \noindent\hbox to \wd0{\hfill\GRAPHIC{#5}{#4}{#1}{#2}{#3}\hfill}\\%
                 \noindent\hbox{\QCBOptA}%
               }%
               \wd1=\wd0
            \fi
         \else
            \ifdim\wd0>0pt
              \hsize=\@tempdima
              \setbox1=\vbox{%
                \unskip\GRAPHIC{#5}{#4}{#1}{#2}{0pt}%
                \break
                \unskip\hbox to \@tempdima{\hfill \QCBOptA\hfill}%
              }%
              \wd1=\@tempdima
           \else
              \hsize=\@tempdima
              \setbox1=\vbox{%
                \unskip\GRAPHIC{#5}{#4}{#1}{#2}{0pt}%
              }%
              \wd1=\@tempdima
           \fi
         \fi
         \@tempdimb=\ht1
         \advance\@tempdimb by -#2
         \advance\@tempdimb by #3
         \leavevmode
         \raise -\@tempdimb \hbox{\box1}%
      \fi
      \egroup%
}%
%
\def\DFRAME#1#2#3#4#5{%
  \vspace\topsep
  \hfil\break
  \bgroup
     \leftskip\@flushglue
	 \rightskip\@flushglue
	 \parindent\z@
	 \parfillskip\z@skip
     \let\QCTOptA\empty
     \let\QCTOptB\empty
     \let\QCBOptA\empty
     \let\QCBOptB\empty
	 \vbox\bgroup
        \ifOverFrame 
           #5\QCTOptA\par
        \fi
        \GRAPHIC{#4}{#3}{#1}{#2}{\z@}%
        \ifUnderFrame 
           \break#5\QCBOptA
        \fi
	 \egroup
  \egroup
  \vspace\topsep
  \break
}%
%
\def\FFRAME#1#2#3#4#5#6#7{%
  \@ifundefined{floatstyle}
    {
     \begin{figure}[#1]%
    }
    {
	 \ifx#1h
      \begin{figure}[H]%
	 \else
      \begin{figure}[#1]%
	 \fi
	}
  \let\QCTOptA\empty
  \let\QCTOptB\empty
  \let\QCBOptA\empty
  \let\QCBOptB\empty
  \ifOverFrame
    #4
    \ifx\QCTOptA\empty
    \else
      \ifx\QCTOptB\empty
        \caption{\QCTOptA}%
      \else
        \caption[\QCTOptB]{\QCTOptA}%
      \fi
    \fi
    \ifUnderFrame\else
      \label{#5}%
    \fi
  \else
    \UnderFrametrue%
  \fi
  \begin{center}\GRAPHIC{#7}{#6}{#2}{#3}{\z@}\end{center}%
  \ifUnderFrame
    #4
    \ifx\QCBOptA\empty
      \caption{}%
    \else
      \ifx\QCBOptB\empty
        \caption{\QCBOptA}%
      \else
        \caption[\QCBOptB]{\QCBOptA}%
      \fi
    \fi
    \label{#5}%
  \fi
  \end{figure}%
 }%
%
%
%
%
%
\newcount\dispkind%

\def\makeactives{
  \catcode`\"=\active
  \catcode`\;=\active
  \catcode`\:=\active
  \catcode`\'=\active
  \catcode`\~=\active
}
\bgroup
   \makeactives
   \gdef\activesoff{%
      \def"{\string"}%
      \def;{\string;}%
      \def:{\string:}%
      \def'{\string'}%
      \def~{\string~}%
    }
\egroup

\def\FRAME#1#2#3#4#5#6#7#8{%
 \bgroup
 \ifnum\@msidraft=\@ne
   \wasdrafttrue
 \else
   \wasdraftfalse%
 \fi
 \def\LaTeXparams{}%
 \dispkind=\z@
 \def\LaTeXparams{}%
 \doFRAMEparams{#1}%
 \ifnum\dispkind=\z@\IFRAME{#2}{#3}{#4}{#7}{#8}{#5}\else
  \ifnum\dispkind=\@ne\DFRAME{#2}{#3}{#7}{#8}{#5}\else
   \ifnum\dispkind=\tw@
    \edef\@tempa{\noexpand\FFRAME{\LaTeXparams}}%
    \@tempa{#2}{#3}{#5}{#6}{#7}{#8}%
    \fi
   \fi
  \fi
  \ifwasdraft\@msidraft=1\else\@msidraft=0\fi{}%
  \egroup
 }%
%

\def\TEXUX#1{"texux"}

%
%
%
%
%
%
%
\def\limfunc#1{\mathop{\rm #1}}%
\def\func#1{\mathop{\rm #1}\nolimits}%
%

%
\long\def\QQQ#1#2{%
     \long\expandafter\def\csname#1\endcsname{#2}}%
\@ifundefined{QTP}{\def\QTP#1{}}{}
\@ifundefined{QEXCLUDE}{\def\QEXCLUDE#1{}}{}
\@ifundefined{Qlb}{}{}
\@ifundefined{Qlt}{}{}
\long\def\QQA#1#2{}%
\def\QTR#1#2{{\csname#1\endcsname {#2}}}%
\def\EXPAND#1[#2]#3{}%
\def\NOEXPAND#1[#2]#3{}%
\def\LaTeXparent#1{}%
\def\ChildStyles#1{}%
\def\ChildDefaults#1{}%
\def\QTagDef#1#2#3{}%

\@ifundefined{correctchoice}{}{}
\@ifundefined{HTML}{\def\HTML#1{\relax}}{}
\@ifundefined{TCIIcon}{\def\TCIIcon#1#2#3#4{\relax}}{}
\if@compatibility
  \typeout{Not defining UNICODE  U or CustomNote commands for LaTeX 2.09.}
\else
  \providecommand{\UNICODE}[2][]{\protect\rule{.1in}{.1in}}
  \providecommand{\U}[1]{\protect\rule{.1in}{.1in}}
  
\fi

\@ifundefined{lambdabar}{
      
   }{}

%
\@ifundefined{StyleEditBeginDoc}{}{}
%
\def\QQfnmark#1{\footnotemark}

%
%
\@ifundefined{TCIMAKEINDEX}{}{\makeindex}%
%
\@ifundefined{abstract}{%
 \def\abstract{%
  \if@twocolumn
   \section*{Abstract (Not appropriate in this style!)}%
   \else \small 
   \begin{center}{\bf Abstract\vspace{-.5em}\vspace{\z@}}\end{center}%
   \quotation 
   \fi
  }%
 }{%
 }%
\@ifundefined{endabstract}{\def\endabstract
  {\if@twocolumn\else\endquotation\fi}}{}%
\@ifundefined{maketitle}{\def\maketitle#1{}}{}%
\@ifundefined{affiliation}{\def\affiliation#1{}}{}%
\@ifundefined{proof}{}{}%
\@ifundefined{endproof}{}{}%
\@ifundefined{newfield}{\def\newfield#1#2{}}{}%
\@ifundefined{chapter}{\def\chapter#1{\par(Chapter head:)#1\par }%
 \newcount\c@chapter}{}%
\@ifundefined{part}{\def\part#1{\par(Part head:)#1\par }}{}%
\@ifundefined{section}{\def\section#1{\par(Section head:)#1\par }}{}%
\@ifundefined{subsection}{\def\subsection#1%
 {\par(Subsection head:)#1\par }}{}%
\@ifundefined{subsubsection}{\def\subsubsection#1%
 {\par(Subsubsection head:)#1\par }}{}%
\@ifundefined{paragraph}{\def\paragraph#1%
 {\par(Subsubsubsection head:)#1\par }}{}%
\@ifundefined{subparagraph}{\def\subparagraph#1%
 {\par(Subsubsubsubsection head:)#1\par }}{}%
\@ifundefined{therefore}{}{}%
\@ifundefined{backepsilon}{}{}%
\@ifundefined{yen}{}{}%
\@ifundefined{registered}{%
   \def\registered{\relax\ifmmode{}\r@gistered
                    \else$\m@th\r@gistered$\fi}%
 \def\r@gistered{^{\ooalign
  {\hfil\raise.07ex\hbox{$\scriptstyle\rm\text{R}$}\hfil\crcr
  \mathhexbox20D}}}}{}%
\@ifundefined{Eth}{}{}%
\@ifundefined{eth}{}{}%
\@ifundefined{Thorn}{}{}%
\@ifundefined{thorn}{}{}%
%
\@ifundefined{degree}{}{}%
%
\newdimen\theight
\@ifundefined{Column}{\def\Column{%
 \vadjust{\setbox\z@=\hbox{\scriptsize\quad\quad tcol}%
  \theight=\ht\z@\advance\theight by \dp\z@\advance\theight by \lineskip
  \kern -\theight \vbox to \theight{%
   \rightline{\rlap{\box\z@}}%
   \vss
   }%
  }%
 }}{}%
\@ifundefined{qed}{\def\qed{%
 \ifhmode\unskip\nobreak\fi\ifmmode\ifinner\else\hskip5\p@\fi\fi
 \hbox{\hskip5\p@\vrule width4\p@ height6\p@ depth1.5\p@\hskip\p@}%
 }}{}%
\@ifundefined{cents}{}{}%
\@ifundefined{tciLaplace}{}{}%
\@ifundefined{tciFourier}{}{}%
\@ifundefined{textcurrency}{}{}%
\@ifundefined{texteuro}{}{}%
\@ifundefined{euro}{}{}%
\@ifundefined{textfranc}{}{}%
\@ifundefined{textlira}{}{}%
\@ifundefined{textpeseta}{}{}%
\@ifundefined{miss}{\def\miss{\hbox{\vrule height2\p@ width 2\p@ depth\z@}}}{}%
\@ifundefined{vvert}{}{}
\@ifundefined{tcol}{\def\tcol#1{{\baselineskip=6\p@ \vcenter{#1}} \Column}}{}%
\@ifundefined{dB}{}{}
\@ifundefined{mB}{}{}
\@ifundefined{nB}{}{}
\@ifundefined{note}{}{}%
\def\newfmtname{LaTeX2e}
%
\ifx\fmtname\newfmtname
  \DeclareOldFontCommand{\rm}{\normalfont\rmfamily}{\mathrm}
  \DeclareOldFontCommand{\sf}{\normalfont\sffamily}{\mathsf}
  \DeclareOldFontCommand{\tt}{\normalfont\ttfamily}{\mathtt}
  \DeclareOldFontCommand{\bf}{\normalfont\bfseries}{\mathbf}
  \DeclareOldFontCommand{\it}{\normalfont\itshape}{\mathit}
  \DeclareOldFontCommand{\sl}{\normalfont\slshape}{\@nomath\sl}
  \DeclareOldFontCommand{\sc}{\normalfont\scshape}{\@nomath\sc}
\fi

%

\def\alpha{{\Greekmath 010B}}%
\def\beta{{\Greekmath 010C}}%
\def\gamma{{\Greekmath 010D}}%
\def\delta{{\Greekmath 010E}}%
\def\epsilon{{\Greekmath 010F}}%
\def\zeta{{\Greekmath 0110}}%
\def\eta{{\Greekmath 0111}}%
\def\theta{{\Greekmath 0112}}%
\def\iota{{\Greekmath 0113}}%
\def\kappa{{\Greekmath 0114}}%
\def\lambda{{\Greekmath 0115}}%
\def\mu{{\Greekmath 0116}}%
\def\nu{{\Greekmath 0117}}%
\def\xi{{\Greekmath 0118}}%
\def\pi{{\Greekmath 0119}}%
\def\rho{{\Greekmath 011A}}%
\def\sigma{{\Greekmath 011B}}%
\def\tau{{\Greekmath 011C}}%
\def\upsilon{{\Greekmath 011D}}%
\def\phi{{\Greekmath 011E}}%
\def\chi{{\Greekmath 011F}}%
\def\psi{{\Greekmath 0120}}%
\def\omega{{\Greekmath 0121}}%
\def\varepsilon{{\Greekmath 0122}}%
\def\vartheta{{\Greekmath 0123}}%
\def\varpi{{\Greekmath 0124}}%
\def\varrho{{\Greekmath 0125}}%
\def\varsigma{{\Greekmath 0126}}%
\def\varphi{{\Greekmath 0127}}%

\def\nabla{{\Greekmath 0272}}
\def\FindBoldGroup{%
   {\setbox0=\hbox{$\mathbf{x\global\edef\theboldgroup{\the\mathgroup}}$}}%
}

\def\Greekmath#1#2#3#4{%
    \if@compatibility
        \ifnum\mathgroup=\symbold
           \mathchoice{\mbox{\boldmath$\displaystyle\mathchar"#1#2#3#4$}}%
                      {\mbox{\boldmath$\textstyle\mathchar"#1#2#3#4$}}%
                      {\mbox{\boldmath$\scriptstyle\mathchar"#1#2#3#4$}}%
                      {\mbox{\boldmath$\scriptscriptstyle\mathchar"#1#2#3#4$}}%
        \else
           \mathchar"#1#2#3#4%
        \fi 
    \else 
        \FindBoldGroup
        \ifnum\mathgroup=\theboldgroup 
           \mathchoice{\mbox{\boldmath$\displaystyle\mathchar"#1#2#3#4$}}%
                      {\mbox{\boldmath$\textstyle\mathchar"#1#2#3#4$}}%
                      {\mbox{\boldmath$\scriptstyle\mathchar"#1#2#3#4$}}%
                      {\mbox{\boldmath$\scriptscriptstyle\mathchar"#1#2#3#4$}}%
        \else
           \mathchar"#1#2#3#4%
        \fi     	    
	  \fi}

\newif\ifGreekBold  \GreekBoldfalse
\let\SAVEPBF=\pbf
\def\pbf{\GreekBoldtrue\SAVEPBF}%

\@ifundefined{theorem}{\newtheorem{theorem}{Theorem}}{}
\@ifundefined{lemma}{\newtheorem{lemma}[theorem]{Lemma}}{}
\@ifundefined{corollary}{\newtheorem{corollary}[theorem]{Corollary}}{}
\@ifundefined{conjecture}{\newtheorem{conjecture}[theorem]{Conjecture}}{}
\@ifundefined{proposition}{\newtheorem{proposition}[theorem]{Proposition}}{}
\@ifundefined{axiom}{}{}
\@ifundefined{remark}{\newtheorem{remark}{Remark}}{}
\@ifundefined{example}{}{}
\@ifundefined{exercise}{}{}
\@ifundefined{definition}{\newtheorem{definition}{Definition}}{}

\@ifundefined{mathletters}{%
  \newcounter{equationnumber}  
  \def\mathletters{%
     \addtocounter{equation}{1}
     \edef\@currentlabel{\theequation}%
     \setcounter{equationnumber}{\c@equation}
     \setcounter{equation}{0}%
     \edef\theequation{\@currentlabel\noexpand\alph{equation}}%
  }
  
}{}

\@ifundefined{BibTeX}{%
    \def\BibTeX{{\rm B\kern-.05em{\sc i\kern-.025em b}\kern-.08em
                 T\kern-.1667em\lower.7ex\hbox{E}\kern-.125emX}}}{}%
\@ifundefined{AmS}%
    {\def\AmS{{\protect\usefont{OMS}{cmsy}{m}{n}%
                A\kern-.1667em\lower.5ex\hbox{M}\kern-.125emS}}}{}%
\@ifundefined{AmSTeX}{}{}%
%

\def\@@eqncr{\let\@tempa\relax
    \ifcase\@eqcnt \def\@tempa{& & &}\or \def\@tempa{& &}%
      \else \def\@tempa{&}\fi
     \@tempa
     \if@eqnsw
        \iftag@
           \@taggnum
        \else
           \@eqnnum\stepcounter{equation}%
        \fi
     \fi
     \global\tag@false
     \global\@eqnswtrue
     \global\@eqcnt\z@\cr}

\def\TCItag{\@ifnextchar*{\@TCItagstar}{\@TCItag}}
\def\@TCItag#1{%
    \global\tag@true
    \global\def\@taggnum{(#1)}}
\def\@TCItagstar*#1{%
    \global\tag@true
    \global\def\@taggnum{#1}}
%
%
%
%
%
%
%
%
%
%
%
%
%
%
%
%
%
%
%
\def\tint{\mathop{\textstyle \int}}%
\def\tsum{\mathop{\textstyle \sum }}%
\def\tprod{\mathop{\textstyle \prod }}%
\def\tbigoplus{\mathop{\textstyle \bigoplus }}%
%
%
%
%
%
%
%
%
%
%
\def\dint{\mathop{\displaystyle \int}}%
\def\dsum{\mathop{\displaystyle \sum }}%

\if@compatibility\else
  \RequirePackage{amsmath}
\fi

\def\ExitTCILatex{\makeatother }

\bgroup
\ifx\ds@amstex\relax
   \message{amstex already loaded}\aftergroup\ExitTCILatex
\else
   \@ifpackageloaded{amsmath}%
      {\if@compatibility\message{amsmath already loaded}\fi\aftergroup\ExitTCILatex}
      {}
   \@ifpackageloaded{amstex}%
      {\if@compatibility\message{amstex already loaded}\fi\aftergroup\ExitTCILatex}
      {}
   \@ifpackageloaded{amsgen}%
      {\if@compatibility\message{amsgen already loaded}\fi\aftergroup\ExitTCILatex}
      {}
\fi
\egroup


\typeout{TCILATEX defining AMS-like constructs in LaTeX 2.09 COMPATIBILITY MODE}
%
%
\let\DOTSI\relax
\def\RIfM@{\relax\ifmmode}%
\def\FN@{\futurelet\next}%
\newcount\intno@
\def\iint{\DOTSI\intno@\tw@\FN@\ints@}%
\def\iiint{\DOTSI\intno@\thr@@\FN@\ints@}%
\def\iiiint{\DOTSI\intno@4 \FN@\ints@}%
\def\idotsint{\DOTSI\intno@\z@\FN@\ints@}%
\def\ints@{\findlimits@\ints@@}%
\newif\iflimtoken@
\newif\iflimits@
\def\findlimits@{\limtoken@true\ifx\next\limits\limits@true
 \else\ifx\next\nolimits\limits@false\else
 \limtoken@false\ifx\ilimits@\nolimits\limits@false\else
 \ifinner\limits@false\else\limits@true\fi\fi\fi\fi}%
\def\multint@{\int\ifnum\intno@=\z@\intdots@                          
 \else\intkern@\fi                                                    
 \ifnum\intno@>\tw@\int\intkern@\fi                                   
 \ifnum\intno@>\thr@@\int\intkern@\fi                                 
 \int}
\def\multintlimits@{\intop\ifnum\intno@=\z@\intdots@\else\intkern@\fi
 \ifnum\intno@>\tw@\intop\intkern@\fi
 \ifnum\intno@>\thr@@\intop\intkern@\fi\intop}%
\def\intic@{%
    \mathchoice{\hskip.5em}{\hskip.4em}{\hskip.4em}{\hskip.4em}}%
\def\negintic@{\mathchoice
 {\hskip-.5em}{\hskip-.4em}{\hskip-.4em}{\hskip-.4em}}%
\def\ints@@{\iflimtoken@                                              
 \def\ints@@@{\iflimits@\negintic@
   \mathop{\intic@\multintlimits@}\limits                             
  \else\multint@\nolimits\fi                                          
  \eat@}
 \else                                                                
 \def\ints@@@{\iflimits@\negintic@
  \mathop{\intic@\multintlimits@}\limits\else
  \multint@\nolimits\fi}\fi\ints@@@}%
\def\intkern@{\mathchoice{\!\!\!}{\!\!}{\!\!}{\!\!}}%
\def\plaincdots@{\mathinner{\cdotp\cdotp\cdotp}}%
\def\intdots@{\mathchoice{\plaincdots@}%
 {{\cdotp}\mkern1.5mu{\cdotp}\mkern1.5mu{\cdotp}}%
 {{\cdotp}\mkern1mu{\cdotp}\mkern1mu{\cdotp}}%
 {{\cdotp}\mkern1mu{\cdotp}\mkern1mu{\cdotp}}}%
%
%
%
\def\RIfM@{\relax\protect\ifmmode}
\def\text{\RIfM@\expandafter\text@\else\expandafter\mbox\fi}
\let\nfss@text\text
\def\text@#1{\mathchoice
   {\textdef@\displaystyle\f@size{#1}}%
   {\textdef@\textstyle\tf@size{\firstchoice@false #1}}%
   {\textdef@\textstyle\sf@size{\firstchoice@false #1}}%
   {\textdef@\textstyle \ssf@size{\firstchoice@false #1}}%
   \glb@settings}

\def\textdef@#1#2#3{\hbox{{%
                    \everymath{#1}%
                    \let\f@size#2\selectfont
                    #3}}}
\newif\iffirstchoice@
\firstchoice@true
%
%
\def\Let@{\relax\iffalse{\fi\let\\=\cr\iffalse}\fi}%
\def\vspace@{\def\vspace##1{\crcr\noalign{\vskip##1\relax}}}%
\def\multilimits@{\bgroup\vspace@\Let@
 \baselineskip\fontdimen10 \scriptfont\tw@
 \advance\baselineskip\fontdimen12 \scriptfont\tw@
 \lineskip\thr@@\fontdimen8 \scriptfont\thr@@
 \lineskiplimit\lineskip
 \vbox\bgroup\ialign\bgroup\hfil$\m@th\scriptstyle{##}$\hfil\crcr}%
\def\Sb{_\multilimits@}%
\def\endSb{\crcr\egroup\egroup\egroup}%
\def\Sp{^\multilimits@}%

%
%
%
\newdimen\ex@
\ex@.2326ex
\def\rightarrowfill@#1{$#1\m@th\mathord-\mkern-6mu\cleaders
 \hbox{$#1\mkern-2mu\mathord-\mkern-2mu$}\hfill
 \mkern-6mu\mathord\rightarrow$}%
\def\leftarrowfill@#1{$#1\m@th\mathord\leftarrow\mkern-6mu\cleaders
 \hbox{$#1\mkern-2mu\mathord-\mkern-2mu$}\hfill\mkern-6mu\mathord-$}%
\def\leftrightarrowfill@#1{$#1\m@th\mathord\leftarrow
\mkern-6mu\cleaders
 \hbox{$#1\mkern-2mu\mathord-\mkern-2mu$}\hfill
 \mkern-6mu\mathord\rightarrow$}%
\def\overrightarrow{\mathpalette\overrightarrow@}%
\def\overrightarrow@#1#2{\vbox{\ialign{##\crcr\rightarrowfill@#1\crcr
 \noalign{\kern-\ex@\nointerlineskip}$\m@th\hfil#1#2\hfil$\crcr}}}%

\def\overleftarrow{\mathpalette\overleftarrow@}%
\def\overleftarrow@#1#2{\vbox{\ialign{##\crcr\leftarrowfill@#1\crcr
 \noalign{\kern-\ex@\nointerlineskip}$\m@th\hfil#1#2\hfil$\crcr}}}%
\def\overleftrightarrow{\mathpalette\overleftrightarrow@}%
\def\overleftrightarrow@#1#2{\vbox{\ialign{##\crcr
   \leftrightarrowfill@#1\crcr
 \noalign{\kern-\ex@\nointerlineskip}$\m@th\hfil#1#2\hfil$\crcr}}}%
\def\underrightarrow{\mathpalette\underrightarrow@}%
\def\underrightarrow@#1#2{\vtop{\ialign{##\crcr$\m@th\hfil#1#2\hfil
  $\crcr\noalign{\nointerlineskip}\rightarrowfill@#1\crcr}}}%

\def\underleftarrow{\mathpalette\underleftarrow@}%
\def\underleftarrow@#1#2{\vtop{\ialign{##\crcr$\m@th\hfil#1#2\hfil
  $\crcr\noalign{\nointerlineskip}\leftarrowfill@#1\crcr}}}%
\def\underleftrightarrow{\mathpalette\underleftrightarrow@}%
\def\underleftrightarrow@#1#2{\vtop{\ialign{##\crcr$\m@th
  \hfil#1#2\hfil$\crcr
 \noalign{\nointerlineskip}\leftrightarrowfill@#1\crcr}}}%

\def\qopnamewl@#1{\mathop{\operator@font#1}\nlimits@}
\let\nlimits@\displaylimits
\def\setboxz@h{\setbox\z@\hbox}

\def\varlim@#1#2{\mathop{\vtop{\ialign{##\crcr
 \hfil$#1\m@th\operator@font lim$\hfil\crcr
 \noalign{\nointerlineskip}#2#1\crcr
 \noalign{\nointerlineskip\kern-\ex@}\crcr}}}}

 \def\rightarrowfill@#1{\m@th\setboxz@h{$#1-$}\ht\z@\z@
  $#1\copy\z@\mkern-6mu\cleaders
  \hbox{$#1\mkern-2mu\box\z@\mkern-2mu$}\hfill
  \mkern-6mu\mathord\rightarrow$}
\def\leftarrowfill@#1{\m@th\setboxz@h{$#1-$}\ht\z@\z@
  $#1\mathord\leftarrow\mkern-6mu\cleaders
  \hbox{$#1\mkern-2mu\copy\z@\mkern-2mu$}\hfill
  \mkern-6mu\box\z@$}

\def\projlim{\qopnamewl@{proj\,lim}}
\def\injlim{\qopnamewl@{inj\,lim}}
\def\varinjlim{\mathpalette\varlim@\rightarrowfill@}
\def\varprojlim{\mathpalette\varlim@\leftarrowfill@}
\def\varliminf{\mathpalette\varliminf@{}}
\def\varliminf@#1{\mathop{\underline{\vrule\@depth.2\ex@\@width\z@
   \hbox{$#1\m@th\operator@font lim$}}}}
\def\varlimsup{\mathpalette\varlimsup@{}}
\def\varlimsup@#1{\mathop{\overline
  {\hbox{$#1\m@th\operator@font lim$}}}}

%
%
%
%
%
%
\begingroup \catcode `|=0 \catcode `[= 1
\catcode`]=2 \catcode `\{=12 \catcode `\}=12
\catcode`\\=12 
|gdef|@alignverbatim#1\end{align}[#1|end[align]]
|gdef|@salignverbatim#1\end{align*}[#1|end[align*]]

|gdef|@alignatverbatim#1\end{alignat}[#1|end[alignat]]
|gdef|@salignatverbatim#1\end{alignat*}[#1|end[alignat*]]

|gdef|@xalignatverbatim#1\end{xalignat}[#1|end[xalignat]]
|gdef|@sxalignatverbatim#1\end{xalignat*}[#1|end[xalignat*]]

|gdef|@gatherverbatim#1\end{gather}[#1|end[gather]]
|gdef|@sgatherverbatim#1\end{gather*}[#1|end[gather*]]

|gdef|@gatherverbatim#1\end{gather}[#1|end[gather]]
|gdef|@sgatherverbatim#1\end{gather*}[#1|end[gather*]]

|gdef|@multilineverbatim#1\end{multiline}[#1|end[multiline]]
|gdef|@smultilineverbatim#1\end{multiline*}[#1|end[multiline*]]

|gdef|@arraxverbatim#1\end{arrax}[#1|end[arrax]]
|gdef|@sarraxverbatim#1\end{arrax*}[#1|end[arrax*]]

|gdef|@tabulaxverbatim#1\end{tabulax}[#1|end[tabulax]]
|gdef|@stabulaxverbatim#1\end{tabulax*}[#1|end[tabulax*]]

|endgroup

\def\align{\@verbatim \frenchspacing\@vobeyspaces \@alignverbatim
You are using the "align" environment in a style in which it is not defined.}

\@namedef{align*}{\@verbatim\@salignverbatim
You are using the "align*" environment in a style in which it is not defined.}
\expandafter\let\csname endalign*\endcsname =\endtrivlist

\def\alignat{\@verbatim \frenchspacing\@vobeyspaces \@alignatverbatim
You are using the "alignat" environment in a style in which it is not defined.}

\@namedef{alignat*}{\@verbatim\@salignatverbatim
You are using the "alignat*" environment in a style in which it is not defined.}
\expandafter\let\csname endalignat*\endcsname =\endtrivlist

\def\xalignat{\@verbatim \frenchspacing\@vobeyspaces \@xalignatverbatim
You are using the "xalignat" environment in a style in which it is not defined.}

\@namedef{xalignat*}{\@verbatim\@sxalignatverbatim
You are using the "xalignat*" environment in a style in which it is not defined.}
\expandafter\let\csname endxalignat*\endcsname =\endtrivlist

\def\gather{\@verbatim \frenchspacing\@vobeyspaces \@gatherverbatim
You are using the "gather" environment in a style in which it is not defined.}

\@namedef{gather*}{\@verbatim\@sgatherverbatim
You are using the "gather*" environment in a style in which it is not defined.}
\expandafter\let\csname endgather*\endcsname =\endtrivlist

\def\multiline{\@verbatim \frenchspacing\@vobeyspaces \@multilineverbatim
You are using the "multiline" environment in a style in which it is not defined.}

\@namedef{multiline*}{\@verbatim\@smultilineverbatim
You are using the "multiline*" environment in a style in which it is not defined.}
\expandafter\let\csname endmultiline*\endcsname =\endtrivlist

\def\arrax{\@verbatim \frenchspacing\@vobeyspaces \@arraxverbatim
You are using a type of "array" construct that is only allowed in AmS-LaTeX.}

\def\tabulax{\@verbatim \frenchspacing\@vobeyspaces \@tabulaxverbatim
You are using a type of "tabular" construct that is only allowed in AmS-LaTeX.}

\@namedef{arrax*}{\@verbatim\@sarraxverbatim
You are using a type of "array*" construct that is only allowed in AmS-LaTeX.}
\expandafter\let\csname endarrax*\endcsname =\endtrivlist

\@namedef{tabulax*}{\@verbatim\@stabulaxverbatim
You are using a type of "tabular*" construct that is only allowed in AmS-LaTeX.}
\expandafter\let\csname endtabulax*\endcsname =\endtrivlist


 \def\endequation{%
     \ifmmode\ifinner 
      \iftag@
        \addtocounter{equation}{-1} 
        $\hfil
           \displaywidth\linewidth\@taggnum\egroup \endtrivlist
        \global\tag@false
        \global\@ignoretrue   
      \else
        $\hfil
           \displaywidth\linewidth\@eqnnum\egroup \endtrivlist
        \global\tag@false
        \global\@ignoretrue 
      \fi
     \else   
      \iftag@
        \addtocounter{equation}{-1} 
        \eqno \hbox{\@taggnum}
        \global\tag@false%
        $$\global\@ignoretrue
      \else
        \eqno \hbox{\@eqnnum}
        $$\global\@ignoretrue
      \fi
     \fi\fi
 } 

 \newif\iftag@ \tag@false
 
 \def\TCItag{\@ifnextchar*{\@TCItagstar}{\@TCItag}}
 \def\@TCItag#1{%
     \global\tag@true
     \global\def\@taggnum{(#1)}}
 \def\@TCItagstar*#1{%
     \global\tag@true
     \global\def\@taggnum{#1}}

  \@ifundefined{tag}{
     \def\tag{\@ifnextchar*{\@tagstar}{\@tag}}
     \def\@tag#1{%
         \global\tag@true
         \global\def\@taggnum{(#1)}}
     \def\@tagstar*#1{%
         \global\tag@true
         \global\def\@taggnum{#1}}
  }{}

%
%
%
%
%

\makeatother

\begin{document}
\title[$\mathcal{L}$-invariants and Darmon cycles attached to modular forms]{%
$\mathcal{L}$-invariants and Darmon cycles attached to modular forms}
\author{Victor Rotger, Marco Adamo Seveso}
\maketitle

\begin{abstract}
Let $f$ be a modular eigenform of even weight $k\geq 2$ and new at a prime $%
p $ dividing exactly the level with respect to an indefinite quaternion
algebra. The theory of Fontaine-Mazur allows to attach to $f$ a monodromy
module $\mathbf{D}^{FM}_f$ and an $\mathcal{L}$-invariant $\mathcal{L}%
^{FM}_f $. The first goal of this paper is building a suitable $p$-adic
integration theory that allows us to construct a new monodromy module $\mathbf{D}%
_f$ and ${\mathcal{L}}$-invariant ${\mathcal{L}}_f$, in the spirit of
Darmon. We conjecture both monodromy modules are isomorphic, and in
particular the two ${\mathcal{L}}$-invariants are equal.

For the second goal of this note we assume the conjecture is true. Let $K$
be a real quadratic field and assume the sign of the functional equation of
the $L$-series of $f$ over $K$ is $-1$. The Bloch-Beilinson conjectures
suggest that there should be a supply of elements in the Mordell-Weil group
of the motive attached to $f$ over the tower of narrow ring class fields of $%
K$. Generalizing work of Darmon for $k=2$, we give a construction of local
cohomology classes which we expect to arise from global classes and satisfy
an explicit reciprocity law, accounting for the above prediction.
\end{abstract}

\tableofcontents

\section{Introduction}

\label{S0}

Let $X/\mathbb{Q}$ denote the canonical model of the smooth projective
Shimura curve attached to an Eichler order $\mathcal{R}$ in an indefinite
quaternion algebra $\mathcal{B}$ over $\mathbb{Q}$. When $\mathcal{B}\simeq {%
\ \mathrm{M}}_2(\mathbb{Q})$ (respectively $\mathcal{B}$ is division), $X$
is the coarse moduli space parametrizing generalized elliptic curves (resp.
abelian surfaces with multiplication by a maximal order in $\mathcal{B}$)
together with a $\Gamma_0$-level structure.

Let $k\geq 2$ be an even integer and let $n:=k-2$ and $m:=n/2$. As explained
in \cite{Ja}, \cite{Sc} for $\mathcal{B}\simeq {\mathrm{M}}_2( \mathbb{Q})$
and in \cite[§10.1]{IS} when $\mathcal{B}$ is division, there exists a Chow
motive $\mathcal{M}_{n}$ over $\mathbb{Q}$ attached to the space $S_k(X)$ of
cusp forms of weight $k$ on $X$. Attached to any eigenform $f\in S_{k}(X)$, there
exists a Grothendieck motive $\mathcal{M}_{n,f}$ over $\mathbb{Q}$ with
coefficients over the field $L_{f}:=\mathbb{Q}(\{a_{\ell }(f)\})$ generated
by the eigenvalues of $f$ under the action of the Hecke operators $T_{\ell }$
for all prime $\ell $, which is constructed as the $f$-isotypical factor of $\mathcal{M}_{n}$ in the category of Grothendieck motives (cf.\,\cite[Thm. 1.2.4]{Sc}).

Fix a prime $p$ and let $H_p(\mathcal{M}_n)$ denote the $p$-adic ètale
realization of $\mathcal{M}_{n}$ obtained as the $(m+1)$-th Tate twist of
the $p$-adic \`{e}tale cohomology of a suitable Kuga-Sato variety. It is a
finite dimensional continuous representation of $G_{ \mathbb{Q}}=\func{Gal\,}
(\qbar/\mathbb{Q})$ over $\mathbb{Q}_p$, endowed with a compatible action of
a Hecke algebra. Similarly, for any eigenform $f\in S_k(X)$ let $V_p(f)$
denote the $p$-adic realization of ${\mathcal{M}}_{n,f}$, a two-dimensional
representation over $L_{f,p}:=L_f\otimes \mathbb{Q}_p$.

Assume now that $p$ divides exactly the level of $\mathcal{R}$. Let $\mathbb{%
\ T}$ denote the maximal quotient of the algebra generated by the Hecke
operators acting on $S_k(X)^{p-new}$ and let $V_p:=H_p( \mathcal{M}_n)
^{p-new}$ denote the $p$-new quotient of $H_p(\mathcal{M}_n)$.

The restriction of $V_p$ to a decomposition subgroup $D_p\simeq \func{Gal\,}
(\qbar_p/\mathbb{Q}_p)$ is a \emph{semistable} representation and the
filtered $(\phi,N)$-module $\mathbf{D}^{FM}=D_{st}(V_p)$ attached by
Fontaine and Mazur to $V_p$ is a two-dimensional monodromy $\mathbb{T}
\otimes \mathbb{Q}_p$-module over $\mathbb{Q}_p$ in the sense of \cite[
Definition 2.2]{IS}. An important invariant of its isomorphism class is the $%
{\mathcal{L}}$-invariant $\mathcal{L}^{FM} := {\mathcal{L}}(\mathbf{D}
^{FM})\in \mathbb{T} \otimes \mathbb{Q}_p$ that one may associate to it. We
refer the reader to \cite{Ma} and \cite[§2]{IS} (and to Proposition \ref{UL}
below) for details. Similarly, let $\mathbf{D}^{FM}_f$ and ${\mathcal{L}}
^{FM}_f\in L_{f,p}$ respectively denote the two-dimensional monodromy module
over $L_{f,p}$ and ${\mathcal{L}}$-invariant associated with $f$.

An illustrative explicit example arises when $k=2$, since then $n=0$ and ${\ 
\mathcal{M}}_0$ can simply be interpreted as the Jacobian $J$ of $X$. Then ${%
\mathcal{M}}_{0,f}=A_f$ is the abelian variety attached to $f$ by Shimura
(cf.\,\cite{Sh1}). As is well-known, if $f$ is an eigenform in $%
S_k(X)^{p-new}$ then $A_f$ has purely toric reduction at $p$ and
Tate-Morikawa's theory allows to attach to it an ${\mathcal{L}}$-invariant ${%
\ \mathcal{L}} (A_f)\in L_{f,p}$ purely in terms of the $p$-adic rigid
analytic description of this variety. When $E=A_f$ is an elliptic curve, for
instance, this ${\mathcal{L}}$-invariant is simply 
\begin{equation*}
{\mathcal{L}}(E)=\frac{\log(q)}{{\func{ord}}_p(q)},
\end{equation*}
where $q=q(E)$ is the Tate period of $E$.

Thanks to the work of several authors (Greenberg-Stevens,
Kato-Kurihara-Tsuji, Coleman-Iovita, Colmez) we now know that ${\mathcal{L}}
^{FM}_f={\mathcal{L}}(A_f)$. The importance of this invariant partly relies
on the fact that, when $a_p=1$, it accounts for the discrepancy between the
special values of the classical L-series $L(f,s)$ and the $p$-adic
L-function $L_p(f,s)$ at $s=1$. This phenomenon was predicted by Mazur, Tate
and Teitelbaum as the \emph{exceptional zero conjecture} and was first
proved by Greenberg and Stevens.

For higher weights $k\geq 4$ similar phenomena occur, and several, a priori
different, ${\mathcal{L}}$-invariants attached to a $p$-new eigenform $f$
were defined by several authors (Teitelbaum, Coleman, Darmon and Orton,
Breuil) besides the aforementioned Fontaine-Mazur's ${\mathcal{L}}^{FM}_f$.
Let us stress that the definition of all these invariants is not always
available in the general setting of this introduction. However, we again
know now, thanks to the previously mentioned works together with \cite{Br}, 
\cite{BDI} and \cite{IS}, that all these invariants are equal whenever are
defined. See the above references for a detailed account of the theory.

The ${\mathcal{L}}$-invariant ${\mathcal{L}}_{f}^{D}$ introduced by Darmon
in the foundational work \cite{Dar} (and generalized by Orton in \cite{Or}
and Greenberg in \cite{Gr}) is the one which is more germane to this article
(cf.\ also §\ref{A}). Darmon's ${\mathcal{L}}$-invariant is only available
when $\mathcal{B}\simeq {\mathrm{M}}_{2}(\mathbb{Q})$ and when $\mathcal{B} $
is an indefinite quaternion algebra but $k=2$. Note that when $\mathcal{B}
\simeq {\mathrm{M}}_{2}(\mathbb{Q})$ its construction heavily relies on the
theory of modular symbols, which in turn is based on the presence of
cuspidal points on the modular curve $X$. This feature is simply absent when 
$\mathcal{B}$ is a division algebra.

The first goal of this paper is providing a construction of an ${\mathcal{L}}
$-invariant ${\mathcal{L}}$ attached to the space of $p$-new cusp forms for
all quaternion algebras $\mathcal{B}$ in the spirit of Darmon, Greenberg and
Orton even in the case $k>2$. This is achieved in §\ref{S22} as a
culmination of the results gathered in §\ref{S1} and §\ref{S2}, which show
the existence of a suitable $p$-adic integration theory and are the
technical core of this paper. One of the main results of this first part of
the article is Theorem \ref{2parts}, which the reader may find of
independent interest. It is an avatar of the classical Amice-Velu-Vishik
theorem and the comparison theorem of Stevens in \cite{St}. The proof
exploits the modular representations of the quaternion algebra $\mathcal{B}$
studied intensively by Teitelbaum and others (cf.\thinspace §\ref{S1} below
for details).

In view of the above discussion it is natural to conjecture that our
invariant ${\mathcal{L}}$ equals ${\mathcal{L}}^{FM}$; cf.\,Conjecture \ref%
{Conjecture equality of the L-invariants 1} for the precise statement. And
indeed, this equality should follow as a consequence of a $p$-adic
Jacquet-Langlands correspondence, which is already available in the
literature when $\mathcal{B}\simeq {\mathrm{M}}_2(\mathbb{Q} )$ thanks to
the work of Stevens. See the forthcoming work \cite{Se} and the references
therein for progress in this direction. In §\ref{S42} we construct a
monodromy module $\mathbf{D}$ out of the ${\mathcal{L}}$-invariant ${\ 
\mathcal{L}}$ which, granting Conjecture \ref{Conjecture equality of the
L-invariants 1}, is shown to be isomorphic to $\mathbf{D}^{FM}$.

Let us now describe the second goal and main motivation of this article, to
which §\ref{S3} is devoted as an application of the material in §\ref{S1}, § %
\ref{S2} and §\ref{SMonodromy}.

Let $K$ be a number field, which for simplicity we assume to be unramified
over $p$. As in \cite{BK} and \cite{Ne2}, for every place $v$ of $K$ define $%
H_{st}^{1}(K_v,V_p)$ to be the kernel of 
\begin{equation}  \label{Hst}
H^1( K_{v},V_p) \rightarrow 
\begin{cases}
H^1( K_v^{unr},V_p) & \text{ if } v\nmid p \\ 
H^1(K_v,\mathbf{B}_{st}\otimes _{\mathbb{Q}_p} V_p) & \text{ if } v\mid p%
\end{cases}%
\end{equation}
where $K_v^{unr}$ is the maximal unramified extension of $K_v$ and $\mathbf{%
\ \ B }_{st}$ stands for Fontaine's ring (cf.\,loc.\,cit.). Define the
(semistable) \emph{Mordell-Weil group} of the representation $V_p$ as 
\begin{equation}  \label{MW}
\mathrm{MW}_{st}(K,V_p) := \ker(H^1(K,V_p)\, \overset{\prod \func{res}_v}{
\longrightarrow} \,\prod\nolimits_{v} \frac{H^1(K_{v},V_p)}{%
H_{st}^1(K_v,V_p) }).
\end{equation}

For any motive ${\mathcal{M}}$ over a field $\kappa $ and any integer $j$,
let $\func{CH}^{j}(\mathcal{M})$
denote the Chow group of cycles on ${\mathcal{M}}$ of codimension $j$
with rational coefficients and let $\func{CH}^{j}(\mathcal{M})_{0}$ denote its subgroup of null-homologous cycles. By the work of Nekov\'a\v{r} (cf.\thinspace \cite[§7]{IS} for precise statements in our general
quaternionic setting), the classical $p$-adic ètale Abel-Jacobi map induces
a commutative diagram: 
\begin{equation}
\begin{array}{ccc}
\func{CH}^{m+1}(\mathcal{M}_{n}\otimes K)_{0} & \overset{cl_{0,K}^{m+1}}{
\rightarrow } & \mathrm{MW}_{st}(K,V_{p}) \\ 
&  &  \\ 
\downarrow &  & \text{ \ }\downarrow res_{v} \\ 
\func{CH}^{m+1}(\mathcal{M}_{n}\otimes K_{v})_{0} & \overset{
cl_{0,K_{v}}^{m+1}}{\rightarrow } & H_{st}^{1}(K_{v},V_{p})%
\end{array}
\label{Diagram Abel-Jacobi maps}
\end{equation}
for any place $v$ of $K$ over $p$. Note that in this situation we have $%
\func{CH}^{m+1}({\mathcal{M}}_{n})_{0}=\func{CH}^{m+1}({\mathcal{M}}_{n})$,
as proved in \cite[Lemma 10.1]{IS}. Composing with the natural projection $%
V_{p}{\rightarrow }V_{p}(f)$, we obtain a map $\func{CH}^{m+1}({\mathcal{M}}
_{n}\otimes K)_{0}\overset{cl_{0,K}^{m+1}(f)}{\rightarrow }\mathrm{MW}
_{st}(K,V_{p}(f))$. As a natural generalization of the conjecture of Birch
and Swinnerton-Dyer, the conjectures of Bloch and Beilinson predict that
(cf.\thinspace e.g.\thinspace \cite[§4]{Ne}): 
\begin{equation}
\func{rank}_{L_{f,p}}(cl_{0,K}^{m+1}(f))={\func{ord}}_{s=k/2}L(f\otimes K,s).
\label{BBconj}
\end{equation}

Let $N^-=\func{disc}(\mathcal{B})\geq 1$ denote the reduced discriminant of $%
\mathcal{B}$ and let $pN^+$ denote the level of $\mathcal{R}$. We have $%
(N^-,pN^+)=1$ and, as we already mentioned, $p\nmid N^+$.

Assume now that $K$ is quadratic, either real or imaginary, satisfying the
following \emph{Heegner hypothesis}:

\begin{itemize}
\item The discriminant $D_K$ of $K$ is coprime to $N:=pN^+ N^-$.

\item All prime factors of $N^-$ remain inert in $K$.

\item All prime factors of $N^+$ split in $K$.

\item $p$ splits (remains inert) in $K$, if $K$ is imaginary (real,
respectively).
\end{itemize}

Thanks to the first condition, the sign of the functional equation of $%
L(f\otimes K,s)$ is simply $(\frac{-N}{K})$. The last three conditions imply
that this sign is $-1$. In particular, $L(f\otimes K,k/2)=0$. Let now $c\geq
1$ be a positive integer and let $H_{c}/K$ denote the (narrow) ring class
field of conductor $c$, whose Galois group $G_{c}:=\func{Gal}(H_{c}/K)$ is
canonically isomorphic via Artin's reciprocity map to the (narrow) Picard
group ${\mathrm{Pic}}({\mathcal{O}}_{c})$ of the order ${\mathcal{O}}
_{c}\subset K$ of conductor $c$. Assuming $(c,N)=1$, for any character $\chi
:G_{c}{\rightarrow }\mathbb{C}^{\times }$ the root number of the twisted
L-series $L(f\otimes K,\chi ,s)$ continues to be $-1$ and the L-series of $%
f\otimes H_{c}$ admits the factorisation $L(f\otimes H_{c},s)=\prod_{\chi
\in G_{c}^{\vee }}L(f\otimes K,\chi ,s)$. It follows that ${\func{ord}}
_{s=k/2}L(f\otimes H_{c},s)\geq h({\mathcal{O}}_{c}):=|G_{c}|$ and the
Bloch-Beilinson conjecture \eqref{BBconj} predicts that $\func{rank}
_{L_{f,p}}(cl_{0,K}^{m+1}(f))=h({\mathcal{O}}_{c})$. In crude terms, there
should be a systematic way of producing a collection of nontrivial elements 
\begin{equation}
\{s_{c}\in \mathrm{MW}_{st}(H_{c},V_{p}(f))\}
\end{equation}
in the Mordell-Weil group of $f$ with coefficients on the tower of class
fields $H_{c}/K$ for $c\geq 1$,$(c,N)=1$.

When $K$ is imaginary, and $N^-=1$, Nekov\'a\v{r} \cite{Ne} was able to
construct these sought-after elements as images by the $p$-adic ètale
Abel-Jacobi map $cl_{0,K}^{m+1}(f)$ of certain Heegner cycles on ${\mathcal{%
\ \ M }}_{n}$ whose construction exploits, as in the classical case $k=2$,
the theory of complex multiplication on elliptic curves. This construction
was later extended to arbitrary discriminants $N^- \geq 1$ by Besser (cf.\,%
\cite[ §8]{IS} for a review).

Assume for the rest of this introduction (and of the article) that $K$ is
real. The aim of §\ref{S3} is exploiting the $p$-adic integration theory
established in §\ref{S2} in order to propose a conjectural construction of
suitable analogues of Heegner cycles for real quadratic fields.

Namely, our construction yields local cohomology classes $s_c\in
H_{st}^1(K_p,V_p)$ which we expect to arise from global cohomology classes
in $\mathrm{MW}_{st}(H_c,V_p) $. Notice that this makes sense, as $H_c$
naturally embeds in $K_p$ because $p$ is inert in $K$.

More precisely, we produce local cohomology classes $s_{\Psi}\in
H_{st}^1(K_p,V_p)$ for every oriented optimal embedding $\Psi: {\mathcal{O}}
_c\hookrightarrow \mathcal{R}$. We expect them to be global over $H_c$ and
we conjecture that they satisfy a reciprocity law which describes the Galois
action of $G_c $ on them. In addition, one further expects these classes to
be related, via a Gross-Zagier formula, to the first derivative of $%
L(f\otimes K,s)$ at $s=k/2$. See §\ref{S3} for precise statements. This
provides a higher weight generalization of the theory of points due to
Darmon \cite{Dar} and continued in \cite{Das}, \cite{Gr}, \cite{DG}, \cite%
{LRV} and \cite{LRV2}.

A fundamental difference (or perhaps one should say inconvenient, as this is
the reason why the rationality of $s_{\Psi}$ over $H_c$ remains highly
conjectural) of this construction when compared with Nekov\'a\v{r}'s
approach is that these cohomology classes are not defined (at least not a
priori) as the image of any cycles on $\func{CH}^{m+1}( \mathcal{M}
_{n}\otimes K_p)_0$.

Instead, letting $\mathbf{P}_{n}$ denote the space of polynomials of degree $%
\leq n$ with coefficients in $K_p$, the role of the Chow group in our
setting is played by the module $H_1(\Gamma ,\func{Div}( \mathcal{H}
_p)(K_p)\otimes \mathbf{P}_n)$. The choice of this module is motivated by
the fact that one can naturally attach a $1$-cycle $y_{\Psi}$ to each optimal
embedding $\Psi$, in a manner which is reminiscent of the $p$-adic
construction of Heegner points for imaginary quadratic fields (cf.\,\cite[§5]%
{BD}), and is a straightforward generalization of the points defined by M. Greenberg in 
\cite{Gr} (cf.\,also \cite{LRV2}). For this reason, the cycles $y_{\Psi}$ may be called Stark-Heegner cycles (following loc. cit.) or also, 
as we suggest here, {\em Darmon cycles}.

Here, $\Gamma \subseteq (\mathcal{B} \otimes \mathbb{Q}_p)^{\times}$ is a
group whose definition is recalled in § \ref{S1} and already makes its
appearance in classical works of Ihara and in \cite{Dar}. The module $\func{
Div}(\mathcal{H}_p)(K_p)$ is the subgroup of divisors with coefficients on $%
\mathcal{H}_p(\bar{K}_p):=\bar{K}_p\setminus \mathbb{Q}_p$ which are
invariant under the action of the Galois group $\func{Gal\,}(\bar{K}_p/K_p)$.

We define $s_{\Psi}$ as the image of $y_{\Psi}$ by a composition of
morphisms 
\begin{equation}
H_1(\Gamma ,\func{Div}(\mathcal{H}_p)(K_p)\otimes \mathbf{P}_n) \overset{
\Phi^{AJ}}{{\longrightarrow }} \frac{D\otimes K_p}{F^m(\mathbf{D}\otimes
K_p) } \overset{?}{\simeq} \frac{D^{FM}\otimes K_p}{F^m(\mathbf{D}%
^{FM}\otimes K_p)}\simeq H_{st}^1(K_p,V_p)
\end{equation}
where the first map is introduced in \eqref{AJ2} and should be regarded as
an analogue of the $p$-adic Abel-Jacobi map; the second map is the
conjectural isomorphism predicted by Conjecture \ref{Conjecture equality of
the L-invariants 1}; the last map is the isomorphism provided by
Bloch-Kato's exponential. Cf.\,§\ref{S34} for more details.

The last section of this manuscript is devoted to the particular cases $k=2$
in §\ref{A1} and $N^{-}=1$ in §\ref{A2}. For $k=2$ we quickly review the
work of \cite{Gr} and \cite{LRV}, comparing it to our constructions. For $%
N^{-}=1$ we rephrase the theory in the convenient language of modular
symbols. This formulation is employed in \cite{Se}, where Conjecture \ref{Conjecture rationality} (3) and Conjecture \ref{Conjecture non-trivial
cases} are proved for suitable genus characters of $K$.

\vspace{0.3cm}

\emph{Acknowledgements.} It is a pleasure to thank Michael Spiess for his
wise comments and advice at delicate stages of this project. The authors also thank Stefano Vigni for his comments, which helped to improve the exposition of our results, and the Centre de Recerca Matem{\`a}tica (Bellaterra, Spain) for its warm hospitality in Winter 2010, when part of this research was carried out.

\section{Modular representations of quaternion algebras}

\label{S1}

\subsection{Quaternion algebras and Hecke modules}

\label{S11}

Let $\mathcal{B}$ be a quaternion algebra over $\mathbb{Q}$ and let $N^-\geq
1$ denote its reduced discriminant. Let $b\mapsto \overline{b}$ denote the
canonical anti-involution of $\mathcal{B}$ and write $\mathrm{Tr}(b)=b+ 
\overline{b}$, $\mathrm{n}(b)=b\overline{b}$ for the reduced trace (resp.
norm) of elements of $\mathcal{B}$.

Assume $\mathcal{B}$ is indefinite, that is, $N^-$ is the square-free
product of an even number of primes. Equivalently, there is an isomorphism $%
\iota _{\infty }:\mathcal{B}\otimes \mathbb{R}\simeq \mathrm{M}_{2}(\mathbb{%
R })$, which we fix for the rest of the article.

For any prime $\ell$, write $\mathcal{B}_{\ell}=\mathcal{B}\otimes \mathbb{Q}
_{\ell}$ and fix isomorphisms $\iota_{\ell}:\,\mathcal{B}_{\ell}\,\simeq \, 
\mathrm{M} _{2}(\mathbb{Q}_{\ell}) $ for $l\nmid N^-$ and $\mathcal{B}
_{\ell}\,\simeq \,\mathbb{H}_{\ell}$ for $\ell\mid N^-$. Here, $\mathbb{H}
_{\ell}$ stands for a fixed choice of a division quaternion algebra over $%
\mathbb{Q}_{\ell}$, which is unique up to isomorphism. Throughout, for each
place $l\leq \infty$ of $\mathbb{Q} $ we shall regard $\mathcal{B}$ as
embedded in $\mathrm{M}_2(\mathbb{Q}_{\ell})$ or $\mathbb{H}_{\ell}$ via the
above fixed isomorphisms.

Let $N^+\geq 1$ be a positive integer coprime with $N^-$ and fix a prime $%
p\nmid N^+N^-$. Write $N=pN^+N^-$ and let $\mathcal{R}_0(pN^+)\,\subset \, 
\mathcal{R}_0(N^+)$ be Eichler orders in $\mathcal{B}$ of level $pN^+$ and $%
N^+$. Let $\Gamma _0(pN^+)$ (resp. $\Gamma_0(N^+)$) denote the subgroup of $%
\mathcal{R}_0(pN^+)^{\times }$ (resp. $\mathcal{R}_0(N^+)^{\times }$) of
elements of reduced norm $1$. Choose an element ${\omega }_p\in \mathcal{R}
_0(pN^+)$ of reduced norm $p$ normalizing $\Gamma_0(pN^+)$ and set $\hat{
\Gamma }_0(N^+):= {\omega }_p \Gamma_0(N^+) {\omega }_p^{-1}$. In order to
lighten the notation, there is no reference to the discriminant $N^-$ in the
symbols chosen to denote these orders and groups; this should cause no
confusion, as the quaternion algebra $\mathcal{B}$ will always be fixed in
our discussion.

Both $\Gamma_0(pN^+)$ and $\Gamma_0(N^+)$ are naturally embedded in $\mathrm{%
\ SL}_2(\mathbb{R})$ and act discrete and discontinuously on Poincar\'e's
upper half-plane $\mathcal{H}$ through Moebius transformations, with compact
quotient if and only if $N^->1$. Let $X^{N^-}_0(pN^+)$, resp. $%
X^{N^-}_0(N^+) $, denote Shimura's canonical model over $\mathbb{Q} $ of
(the cuspidal compactification of, if $N^-=1$) these quotients (cf.\,\cite[§%
9.2]{Sh1}).

For reasons that will become clear later, it will also be convenient to
consider the Eichler $\mathbb{Z}[ 1/p] $-order $\mathcal{R}:= \mathcal{R}
_0(N^+)[ 1/p] $. Similarly as above, let $\Gamma$ denote the subgroup of
elements of reduced norm $1$ of $\mathcal{R}^{\times }$. This group was
first studied by Ihara and also makes an appearance in the works \cite{Dar}, 
\cite{Das}, \cite{Gr} and \cite{LRV}.

If $A$ is a module endowed with an action of $\mathcal{B}^{\times }$ and $G$
is either $\Gamma _{0}(pN^{+})$, $\Gamma _{0}(N^{+})$, $\hat{\Gamma}
_{0}(N^{+})$ or $\Gamma $, the homology and cohomology groups $H_{i}(G,A)$
and $H^{i}(G,A)$ are naturally modules over a Hecke algebra 
\begin{equation*}
\mathcal{H}(G):=\mathbb{Z}[T_{\ell }:\ell \nmid N_{G};U_{\ell }:\ell \mid
N_{G}^{+},W_{\ell }^{-}:\ell \mid N^{-},W_{p},W_{\infty }],
\end{equation*}
where $N_{G}^{+}=pN^{+}$ for $G=\Gamma _{0}(pN^{+}),\Gamma $ and $%
N_{G}^{+}=N^{+}$ otherwise, and $N_{G}=N_{G}^{+}N^{-}$. If $A_{1}{\
\rightarrow }A_{2}$ is a morphism of $\mathcal{B}^{\times }$ -modules, the
corresponding maps 
\begin{equation}
H^{i}(G,A_{1}){\rightarrow }H^{i}(G,A_{2})  \label{Hecke}
\end{equation}
are then morphisms of $\mathcal{H}(G)$-modules. Cf.\thinspace e.g.\thinspace 
\cite[§1]{AS}, \cite[§3]{Gr} and \cite[§2]{LRV} for details.

Choose an element $\omega _p\in \mathcal{R}_0(pN^+)$\ (resp. $\omega_{\infty
}$) of reduced norm $p$\ (resp. $-1$)\ that normalizes $\mathcal{R}_0(pN^+)$
; such elements exist and are unique up to multiplication by elements of $%
\Gamma _0(pN^+)$. The operators $W_p$ and $W_{\infty}$ mentioned above are
the (Atkin-Lehner) involutions defined as the double-coset operators
attached to ${\omega }_p$ and ${\omega }_{\infty}$, respectively. For any $%
\mathbb{Z}[W_\infty]$-module $A$ and sign $\epsilon\in\{\pm1\}$ we set $%
A^\epsilon:=A/(W_\infty-\epsilon)$. Up to $2$-torsion, $A \simeq A^+ \oplus
A^-$.

For any element $\gamma $ in $\mathrm{GL}_2(\mathbb{Q}_p)$ or $\mathrm{GL}
_2( {\mathbb{R}} )$, write $\hat{\gamma}:= {\omega }_p \gamma {\omega }
_p^{-1}$. For any subgroup $G$ of $\mathrm{GL}_2(\mathbb{Q}_p)$ or $\mathrm{%
GL}_2({\mathbb{R}} )$, write $\hat G = \{ \hat g, g\in G\}$. Note that $\hat{
\Gamma_0(pN^+)}=\Gamma_0(pN^+)$, $\hat{\Gamma}=\Gamma$, whereas $\hat{
\Gamma }_0(N^+) \ne \Gamma_0(N^+)$. In fact, 
\begin{equation}
\Gamma = \Gamma_0(N^+) \star_{\Gamma_0(pN^+)} \hat{\Gamma }_0(N^+)
\end{equation}
is the amalgamated product of $\Gamma_0(N^+)$ with $\hat{\Gamma }_0(N^+)$
over $\Gamma_0(pN^+) = \Gamma_0(N^+) \cap \hat{\Gamma }_0(N^+) $.

\subsection{The Bruhat-Tits tree}

\label{S12}

Let $\mathcal{T}$ denote Bruhat-Tits' tree attached to $\mathrm{PGL}_{2}( 
\mathbb{Q}_p)$, whose set $\mathcal{V}$ of vertices is the set of homothety
classes of rank two $\mathbb{Z}_p$-submodules of $\mathbb{Q}_p^2$. Write $%
\mathcal{E}$ for the set of oriented edges of the tree. Given $e\in \mathcal{%
E}$, write $s(e)$ and $t(e)$ for the source and target of the edge, and $%
\bar e$ for the edge in $\mathcal{E}$ such that $s(\bar e)=t(e)$ and $t(\bar
e)=s(e)$. Cf.\,e.g.\,\cite[§1.3.1]{DT} for more details.

Write $v_{\ast }$, $\hat{v}_{\ast }$ for the vertices associated with the
standard lattice $L_{\ast }:=\mathbb{Z}_p\times \mathbb{Z}_p$ and the
lattice $\hat{L}_{\ast}:=\mathbb{Z}_p\times p\mathbb{Z}_p$, respectively.
Note that $\omega _p$ acts on $\mathcal{T}$, mapping $v_{\ast }$ to $\hat{v}
_{\ast }$. In general, for any vertex $v\in \mathcal{V}$, write $\hat{v}
:=\omega _p(v)$.

Let $e_{\ast}$ be the edge with source $s(e_{\ast }) = v_{\ast}$ and $%
t(e_{\ast})=\hat{v}_{\ast}$. Let $\mathcal{V}^+$ (resp. $\mathcal{V}^-$)
denote the subset of vertices $v\in \mathcal{V}$ which lie at \emph{even}
(resp. \emph{odd}) distance from $v_{\ast }$. Similarly, write $\mathcal{E}
^+ $ (resp. $\mathcal{E}^-$) for the subset of edges $e$ in $\mathcal{E}$
such that $s(e)\in \mathcal{V}^+$ (resp. $\mathcal{V}^-$).

Let $G$ be a subgroup of $\mathrm{GL}_{2}(\mathbb{Q}_p)$ (as the ones
already introduced in the previous section) and let $A$ be any left $G$
-module. For any set ${\mathcal{S}}$, e.g.$\,{\mathcal{S}}=\mathcal{V}$ or $%
\mathcal{E}$, write $C({\mathcal{S}},A)$ for the group of functions on ${\ 
\mathcal{S}}$ with values in $A$.

Let also $C_0(\mathcal{E},A)$ be the subgroup of functions $c$ in $C( 
\mathcal{E},A)$ such that $c(\bar{e})=-c(e)$ for all $e\in \mathcal{E}$, and 
\begin{equation*}
C_{har}( A) =\{c\in C_0(\mathcal{E},A)\,:\sum_{s(e)=v}c(e)=0\quad \forall
v\in \mathcal{V}\,\}
\end{equation*}
be the subgroup of $A$-valued \emph{harmonic cocycles}. These groups are
naturally endowed with a left action of $G$ by the rule $(^{\gamma
}c)(e):=\gamma (\,c(\gamma ^{-1}e)\,)$ and it is easy to see that they sit
in the exact sequences (cf.\,\cite[Lemma 24]{Gr} for the first one): 
\begin{equation}  \label{exact sequence edges/vertices}
\begin{array}{ccccccccc}
0 & \rightarrow & C_{har}( A) & \rightarrow & C_0(\mathcal{E},A) & \overset{
\varphi }{\rightarrow } & C(\mathcal{V},A) & \rightarrow & 0 \\ 
&  &  &  & \varphi (c)(v) & := & \sum_{s(e)=v}c(e)\text{,} &  & 
\end{array}%
\end{equation}
\begin{equation}  \label{exact sequence vertices/edges}
\begin{array}{ccccccccc}
0 & \rightarrow & A & \rightarrow & \mathcal{C}( \mathcal{V},A) & \overset{
\partial ^{\ast }}{\rightarrow } & \mathcal{C}_0( \mathcal{E},A) & 
\rightarrow & 0 \\ 
&  &  &  & ( \partial ^{\ast }c) ( e) & := & c( s( e) ) -c( t( e) ) \text{.}
&  & 
\end{array}%
\end{equation}

\subsection{Rational representations}

\label{S13}

In this section we recall a construction of a rational representation $%
\mathbb{V}_n$ of $\mathcal{B}^{\times}$ for each even integer $n\geq 0$
which already appears in \cite[§1.2]{BDIS} and \cite[§5]{IS}. Its relevance
will be apparent in the next section, as according to the Eichler-Shimura
isomorphism (cf.\,\eqref{Diagram Eichler-Shimura isomorphism} below) the
cohomology groups of $\mathbb{V}_n$ provide a natural rational structure for
both the spaces of holomorphic and $p$-adic modular forms with respect to
the arithmetic subgroups of $\mathcal{B}^{\times}$.

Let $\mathcal{B}_0=\{b\in \mathcal{B}, \func{Tr}(b)=0\} \subset \mathcal{B}$
, endowed with a right action of $\mathcal{B}^{\times}$ by the rule $b\cdot
\beta:=\beta^{-1} b \beta$ for $\beta\in \mathcal{B}^{\times}$ and $b\in 
\mathcal{B}_0$. The pairing 
\begin{equation}  \label{pair}
\left\langle b_1,b_2\right\rangle :=\frac1{2}\mathrm{Tr}(b_1\cdot \bar{b}_2)
\end{equation}
is non-degenerate and symmetric on $\mathcal{B}_0$, and allows to identify $%
\mathcal{B}_0$ with its own dual, whence to regard it as a left $\mathcal{B}
^{\times}$-module.

For any $r\geq 0$, the $r$-th symmetric power $\limfunc{Sym}^{r}( \mathcal{B}
_0)$ of $\mathcal{B}_0$ is naturally a left $\mathcal{B}^{\times}$-module
endowed with the pairing induced by \eqref{pair}, which we continue to
denote $\left\langle -,-\right\rangle $. For $r\geq 2$, the Laplace operator 
\begin{equation*}
\begin{array}{cccc}
\Delta_r : & \limfunc{Sym}\nolimits^{r}( \mathcal{B}_0) & \rightarrow & 
\limfunc{Sym}\nolimits^{r-2}( \mathcal{B}_0)%
\end{array}%
\end{equation*}
attached to $\left\langle -,-\right\rangle $ is defined by the rule 
\begin{equation*}
\Delta_r ( b_1\cdot ...\cdot b_r) := \tsum\nolimits_{i,j=1}^{r}\left\langle
b_i,b_{j}\right\rangle b_1\cdot ...\cdot \widehat{b}_i\cdot ...\cdot 
\widehat{b}_{j}\cdot ...\cdot b_r.
\end{equation*}

The Laplace operator $\Delta_r$ is a morphism of $\mathcal{B}^{\times}$
-modules because, as one checks, $\langle b_1\cdot \beta ,b_2 \beta \rangle
= \langle b_1 ,b_2\rangle$ for all $\beta \in \mathcal{B} ^{\times}$, $b_1,
b_2\in \mathcal{B}_0$.

\begin{definition}
Let $\mathbb{V}_0 = \mathbb{Q} $, $\mathbb{V}_2=\mathcal{B}_0$ and, for any 
\emph{even} integer $n\geq 4$, let $m:=n/2$ and 
\begin{equation*}
\mathbb{V}_n:=\ker \Delta _{m}.
\end{equation*}
\end{definition}

If $R$ is a commutative $\mathbb{Q}$-algebra, write $\mathbb{V}_n( R) := 
\mathbb{V}_n\otimes R$. For $n=0$, $\mathbb{V}_0=\mathbb{Q} $ is endowed
simply with the trivial action of $\mathcal{B}^{\times}$. For arbitrary $n$,
we may regard the spaces $\mathbb{V}_n$ both as right and left $\mathcal{B}
^{\times}$-modules, the pairing $\left\langle -,-\right\rangle $ identifying
one with another (cf.\,\cite[§1.2]{BDIS}). As such, the general theory
reviewed in §\ref{S11} and §\ref{S12} applies in particular to these modules.

Over a base field $K/\mathbb{Q} $ which splits $\mathcal{B}$, the modules $%
\mathbb{V}_n(K)$ admit a much simpler and classical description, which we
now review. For any even integer $n\geq 0$ let $\mathbf{P}_n$ denote the $%
\mathbb{Q}$-vector space of polynomials of degree at most $n$ with rational
coefficients, and write $\mathbf{P}_n(R):=\mathbf{P}_n\otimes R$ for any
algebra $R$ as above. It can be endowed with a right action of $\mathrm{GL}
_{2}(R)$ by the rule 
\begin{equation*}
P(x)\cdot \gamma \,:=\,\frac{(cx+d)^n}{\det(\gamma)^{n/2}}\cdot P(\frac{ax+b 
}{cx+d}),\quad \quad \gamma = 
\begin{pmatrix}
a & b \\ 
c & d%
\end{pmatrix}
,\,P\in \mathbf{P}_n(R).
\end{equation*}

This way, $\mathbf{V}_n(R)=\mathbf{P}_n^{\vee }( R) :=\Hom_{R}(\mathbf{P}
_n(R),R)$, the dual of$\mathbf{P}_n( R) $, inherits a left $\mathrm{GL}
_{2}(R)$-action, which actually descends to ${\mathrm{PGL}}_2(R)$.

Let $K$ be a field of characteristic $0$ such that $\mathcal{B}\otimes_{ 
\mathbb{Q}} K\simeq {\mathrm{M}}_2(K)$, and identify these two algebras by
fixing an isomorphism between them. The function 
\begin{equation*}
\begin{matrix}
\mathcal{B}_0 & {\longrightarrow } & \mathbf{P_2}(K) \\ 
b & \mapsto & \mathrm{tr}(b\cdot 
\begin{pmatrix}
x & -x^2 \\ 
1 & -x%
\end{pmatrix}
)%
\end{matrix}%
\end{equation*}
is an isomorphism of right $\mathcal{B}^{\times}$-modules. Identifying $%
\mathcal{B}_0$ with its own dual via \eqref{pair}, it induces an isomorphism
of left $\mathcal{B}^{\times}$-modules (we omit the details; cf.\,\cite[§1.2]%
{BDIS}, where the definitions of the pairings and actions are the same as
the ones taken here, and \cite[§5]{IS}, \cite[§2]{JL}): 
\begin{equation}  \label{Vn}
\mathbb{V}_n(K) \simeq \mathbf{V}_n(K).
\end{equation}

Notice that we already fixed in §\ref{S11} isomorphisms $\iota_{\ell}: 
\mathcal{B} \otimes \mathbb{Q}_{\ell}\simeq {\mathrm{M}}_2(\mathbb{Q}
_{\ell}) $ for places $l\leq \infty$, $l\nmid N^-$. Accordingly, in the
sequel we shall freely identify $\mathbb{V}_n(\mathbb{Q}_{\ell})$ with $%
\mathbf{V}_n( \mathbb{Q}_{\ell})$.

\subsection{Modular forms and the Eichler-Shimura isomorphism}

\label{S14}

For any even integer $n\geq 0$ set $k=n+2 = 2(m+1)$. Let $G$ either $%
\Gamma_0(pN^+)$, $\Gamma_0(N^+)$, $\hat{\Gamma }_0(N^+)$ or $\Gamma$.

\begin{definition}
A $\mathcal{H}(G)$-module $M$ admits an \emph{Eisenstein/Cuspidal
decomposition} (of weight $k$) whenever there exists a Hecke operator $%
T_{\ell}$ for some $\ell\nmid N_G$ such that $M=M^{Eis}\oplus M^{c}$ and $%
t_{\ell}:=T_{\ell}- \ell^{k-1}-1$ vanishes on $M^{Eis} $ and is invertible
on $M^{c}$.
\end{definition}

\begin{remark}
\label{RmkEC} If such decomposition exists, it is easy to check that it is
unique and both factors $M^{Eis}$ and $M^{c}$ are naturally $\mathcal{H}(G)$
-modules.

Furthermore, let $M_i$, $i=1,2$, be $\mathcal{H}(G_i)$-modules, where $G_1$, $G_2$ is any choice of groups in
either the set $\{\Gamma_0(pN^+), \Gamma\}$ or the set $\{\Gamma_0(N^+), \hat{\Gamma }
_0(N^+) \}$. Let $f: M_1{\rightarrow } M_2$ be a morphism which is
equivariant for the actions of $\mathcal{H}(G_1)$ and $\mathcal{H}(G_2)$ in
the obvious sense. If both $M_1$ and $M_2$ admit an Eisenstein/Cuspidal
decomposition, then $f$ decomposes accordingly as $f=f^{Eis}\oplus f^{c}$.
In particular, $\ker(f)$ and $\mathrm{coker}(f)$ also admit an
Eisenstein/Cuspidal decomposition.

Finally, if 
\begin{equation*}
0 {\rightarrow } M_1 {\rightarrow } M_2 {\rightarrow } M_3 {\rightarrow }
M_4 {\rightarrow } M_5 {\rightarrow } 0
\end{equation*}
is a Hecke equivariant exact sequence of Hecke modules such that $M_1$, $M_2$
, $M_4$ and $M_5$ admit an Eisenstein/Cuspidal decomposition, then so does $%
M_3$.
\end{remark}

Let now $G$ denote either $\Gamma_0(pN^+)$ or $\Gamma_0(N^+)$. Let 
\begin{equation*}
S_k(G)\subseteq M_{k}(G)
\end{equation*}
denote the $\mathbb{C}$-vector space of weight $k$ holomorphic (cuspidal)
modular forms with respect to $G$. Let $\mathbb{T}_G$ (resp. $\tilde{\mathbb{%
\ T}}_G$) be the maximal quotient of the Hecke algebra $\mathcal{H}
(G)\otimes \mathbb{Q}$ acting faithfully on $S_k(G)$ (resp. $M_{k}(G)$).

As a basic example, $M=M_{k}(G)$ admits an Eisenstein/Cuspidal decomposition
with $M^{c}=S_{k}(G)$ and $M^{Eis}=E_{k}( G)$, the space of modular forms
generated by the Eisenstein series. These series are only defined for $N^-=1$
; in order to have uniform notations, we set this space to be $\{0\}$ when $%
N^->1$.

By \cite[Theorem 3.51]{Sh1} and the Jacquet-Langlands correspondence, 
\begin{equation}  \label{Remark on the rank 1}
\dim _{\mathbb{C}}S_{k}(G) =\dim _{\mathbb{Q}}\mathbb{T}_G
\end{equation}
and in fact $S_{k}(G)$ is a module of rank one over $\mathbb{T}_G\otimes 
\mathbb{C}$.

The Eichler-Shimura isomorphism yields an identification of exact sequences
(see \cite[Ch. 6]{Hi} and \cite[Ch. III]{Fr}) 
\begin{equation}  \label{Diagram Eichler-Shimura isomorphism}
\begin{array}{ccccccccc}
0 & \rightarrow & S_{k}( G) \otimes _{{\mathbb{R}}}\mathbb{C} & \rightarrow
& (S_{k}( G ) \otimes _{{{\mathbb{R}}}}\mathbb{C})\oplus E_{k}( G ) & 
\rightarrow & E_{k}( G) & \rightarrow & 0 \\ 
&  & \parallel &  & \parallel &  & \parallel &  &  \\ 
0 & \rightarrow & H_{par}^1(G,\mathbf{V}_n( \mathbb{C}) ) & \rightarrow & 
H^1( G,\mathbf{V}_n( \mathbb{C}) ) & \overset{\func{res}}{\rightarrow } & 
H^1_{Eis}(G,\mathbf{V}_n(\mathbb{C})) & \rightarrow & 0,%
\end{array}%
\end{equation}
where $H^1_{Eis}(G,\mathbf{V}_n(\mathbb{C}))$ is the image of the
restriction map 
\begin{equation}  \label{restr}
H^1( G,\mathbf{V}_n( \mathbb{C}) ) {\longrightarrow }
\bigoplus_{i=1}^{t}H^1(G_{s_i},\mathbf{V}_n( \mathbb{C}) ).
\end{equation}
Here, $C_{G}=\{s_1,...,s_{t}\}$ denotes a set of representatives for the
cusps of $G$ and for any $s\in C_{G}$, $G_s$ denotes the stabilizer of $s$
in $G$.

Thanks to \eqref{Vn} and to the theorem of Universal Coefficients, there
is an isomorphism of Hecke modules 
\begin{equation}
H^{1}(G,\mathbf{V}_{n}(\mathbb{C}))\simeq H^{1}(G,\mathbb{V}_{n})\otimes 
\mathbb{C}.  \label{H1Vn}
\end{equation}

Note that the map \eqref{restr} can in fact be viewed as the base change to $%
\mathbb{C}$ of the natural restriction map $H^{1}(G,\mathbf{V}_{n}){\
\longrightarrow }\bigoplus_{i=1}^{t}H^{1}(G_{s_{i}},\mathbb{V}_{n})$. As a
point of caution, the reader may notice that when $N^{-}>1$ these cohomology
groups make no sense if we replace the module of coefficients $\mathbb{V}%
_{n} $ by $\mathbf{V}_{n}$. The kernel and image of this map can thus be
taken as the definition of $H_{par}^{1}(G,\mathbb{V}_{n})$ and $%
H_{Eis}^{1}(G,\mathbb{\ V}_{n})$, respectively. Since the Eichler-Shimura
isomorphism is Hecke equivariant, it follows from \eqref{H1Vn} that $H^{1}(G,%
\mathbb{V} _{n})=H_{par}^{1}(G,\mathbb{V}_{n})\oplus H_{Eis}^{1}(G,\mathbb{V}%
_{n})$ is already an Eisenstein/Cuspidal decomposition.

\begin{remark}
\label{rankone} It thus follows from \eqref{Remark on the rank 1}, %
\eqref{Diagram Eichler-Shimura isomorphism} and \eqref{H1Vn} that $H^1(G, 
\mathbb{V}_n)^{c}$ is a module of rank two over $\mathbb{T}_{G}.$ More
precisely, since $\mathrm{n}(\omega_{\infty})=-1$, it follows from the work 
\cite{Sh3} that the Atkin-Lehner involution $W_{\infty}$ acts on $H^1(G, 
\mathbb{V}_n(\mathbb{C} ))^{c}$ as complex conjugation. Hence, for each
choice of sign $\epsilon\in \{\pm 1\}$, $H^1(G,\mathbb{V}_n)^{c,\epsilon}$
is a module of rank one over $\mathbb{T}_{G}.$
\end{remark}

\begin{remark}
\label{Remark on the rank 3} If $f\in S_{k}(G)$ is a primitive normalized
eigenform for the action of $\mathbb{T}_{G}$, it corresponds via the
Eichler-Shimura isomorphism to an element $c_{f}\in H^{1}(G,\mathbb{V}
_{n}(L_{f}))^{c}$ , where $L_{f}$ is the number field generated over $%
\mathbb{Q}$ by the eigenvalues of $f$. This is a consequence of multiplicity
one and the fact that $H^{1}(G,\mathbb{V}_{n})^{c}$ is a rational structure
for $S_{k}(G)$ which is preserved by the action of the Hecke algebra. Hence,
if $K$ is a field which contains all the eigenvalues for the action of the
Hecke operators, then $H^{1}(G,\mathbb{V}_{n}(K))^{c}$ admits a basis of
eigenvectors for this action.
\end{remark}

\begin{remark}
\label{i} For all $i\geq 0$, the spaces $H^i(G,\mathbb{V}_n)$ also admit an
Eisenstein/Cuspidal decomposition. For $i=1$ this is the content of the
above discussion. For $i>2$, these groups vanish because the cohomological
dimension of $G$ is $2$.

For $i=0$: if $n>0$, $H^0(G,\mathbb{V}_0) = \{ 0\}$ by \cite[p.\,162,
Prop.\,1; p.\,165, Lemma 2]{Hi} and there is nothing to prove; if $n=0$, the
action of the Hecke operators $T_{\ell}$ for $\ell \nmid N_G$ is given by
multiplication by $\ell+1$ and therefore 
\begin{equation}  \label{0}
H^0(G,\mathbb{V}_0)^{c} = \{ 0\}.
\end{equation}

For $i=2$: $H^2(G,\mathbb{V}_n)\simeq H_{c}^0(G,\mathbb{V}_n)^{\vee}$ by
Poincar\'e duality and the paragraph above applies. Here, the latter group
stands for the cohomology group with compact support of $G$ with
coefficients on $\mathbb{V}_n$. See e.g.\cite[Ch.\,III]{Fr} and \cite{MS}
for more details.
\end{remark}

\begin{definition}
\label{pnew} Let 
\begin{equation*}
\begin{matrix}
\mathrm{cor}: & H^1( \Gamma _0(pN^+),\mathbb{V}_n) & {\rightarrow } & 
H^1(\Gamma_0(N^+),\mathbb{V}_n) \\ 
\hat{\mathrm{cor}}: & H^1( \Gamma _0(pN^+),\mathbb{V}_n) & {\rightarrow } & 
H^1(\hat{\Gamma }_0(N^+),\mathbb{V}_n)%
\end{matrix}%
\end{equation*}
denote the corestriction maps induced by the inclusions $\Gamma_0(pN^+)
\subset \Gamma_0(N^+), \hat{\Gamma }_0(N^+)$ and let 
\begin{equation*}
H^1( \Gamma_0(pN^+),\mathbb{V}_n)^{p-new}:=\mathrm{Ker}(\mathrm{cor} \oplus 
\hat{\mathrm{cor}}).
\end{equation*}
\end{definition}

Similarly, we may define $H^1(\Gamma_0(pN^+),\mathbb{V}_n)^{p-old, c}:= 
\mathrm{Im}(\func{res} + \hat{\func{res}})^{c}$, where 
\begin{equation*}
H^1(\Gamma_0(N^+),\mathbb{V}_n)\oplus H^1(\Gamma_0(N^+),\mathbb{V}_n) \,\, 
\overset{\func{res} +\hat{\func{res}}}{{\longrightarrow }} \,\,
H^1(\Gamma_0(pN^+),\mathbb{V}_n)
\end{equation*}
is the sum of the natural restriction maps.

Obviously, over $\mathbb{C}$ the above corestriction maps admit a parallel
description purely in terms of modular forms and degeneracy maps, via %
\eqref{Diagram Eichler-Shimura isomorphism}. Via the above identifications,
the Petersson inner product induces on $H^1(\Gamma_0(pN^+),\mathbf{V}_n( 
\mathbb{C}))^{c}$ a perfect pairing with respect to which $%
H^1(\Gamma_0(pN^+),\mathbf{V}_n(\mathbb{C}))^{p-old, c}$ is the orthogonal
complement of $H^1( \Gamma_0(pN^+),\mathbf{V}_n(\mathbb{C}))^{p-new, c}$.

Let $K_p$ be a complete field extension of $\mathbb{Q}_p$. We call $H^1(
\Gamma_0(pN^+),\mathbb{V}_n)^{p-new}\otimes K_p = H^1( \Gamma_0(pN^+), 
\mathbf{V}_n(K_p))^{p-new}$ the \emph{space of $p$-new $p$-adic modular
forms }. It follows from the above discussion that 
\begin{equation}  \label{decomp}
H^1(\Gamma_0(pN^+),\mathbf{V}_n(K_p))^{c} = H^1(\Gamma_0(pN^+),\mathbf{V}
_n(K_p))^{p-old,c} \oplus H^1(\Gamma_0(pN^+),\mathbf{V}_n(K_p))^{p-new,c}.
\end{equation}

\subsection{The cohomology of $\Gamma$}

\label{S15}

Besides the relationship between $H^1(\Gamma_0(pN^+),\mathbb{V}_n)$ and
modular forms provided by the Eichler-Shimura isomorphism, these groups can
also be related to the cohomology of the group $\Gamma$ introduced in §\ref%
{S11} with values on the modules of functions on Bruhat-Tits's tree $%
\mathcal{T}_p$, as we now review.

The long exact sequence in cohomology arising from 
\eqref{exact sequence
edges/vertices} with $A=\mathbb{V}_n$ gives rise to an exact sequence of $%
\mathcal{H}(\Gamma)$-modules (cf.\,§\ref{S11}) 
\begin{equation}  \label{les}
...{\rightarrow } H^0(\Gamma,C(\mathcal{V},\mathbb{V}_n)) \,{\rightarrow }
\, H^1(\Gamma, C_{har}(\mathbb{V}_n)) \, \overset{s}{{\rightarrow }} \,
H^1(\Gamma,C_0(\mathcal{E},\mathbb{V}_n)) \, \rightarrow \, H^1(\Gamma,C( 
\mathcal{V},\mathbb{V}_n)).
\end{equation}

By Shapiro's lemma, for all $i\geq 0$ there are isomorphisms 
\begin{equation}  \label{shp}
\begin{matrix}
\mathscr{S}_{\mathcal{V}}: & H^i(\Gamma,C(\mathcal{V},\mathbb{V}_n)) & \simeq
& H^i(\Gamma_0(N^+),\mathbb{V}_n)^2, \\ 
\mathscr{S}_{\mathcal{E}}: & H^i(\Gamma,C_0(\mathcal{E},\mathbb{V}_n)) & 
\simeq & H^i(\Gamma_0(pN^+),\mathbb{V}_n),%
\end{matrix}%
\end{equation}
where throughout, by a slight abuse of notation, by $H^i(\Gamma_0(N^+), 
\mathbb{V}_n)^2$ we actually mean $H^i(\Gamma_0(N^+),\mathbb{V}_n)\oplus
H^i( \hat{\Gamma }_0(N^+),\mathbb{V}_n)$. Note that conjugation by ${\omega }
_p$ induces a canonical isomorphism 
\begin{equation}  \label{GahGa}
H^i(\Gamma_0(N^+),\mathbb{V}_n)\simeq H^i(\hat{\Gamma }_0(N^+),\mathbb{V}_n)
\end{equation}
which we will sometimes use in order to identify these two spaces without
further comment.

These isomorphisms are Hecke-equivariant in the following sense: for every
prime $\ell \nmid p N^+ N^-$, $T_{\ell}\cdot \mathscr{S}_{\mathcal{V}} = %
\mathscr{S}_{\mathcal{V}} \cdot T_{\ell }$ and $T_{\ell}\cdot \mathscr{S}_{ 
\mathcal{E}} = \mathscr{S}_{\mathcal{E}} \cdot T_{\ell }$. Although we are
using the same symbol for the Hecke operator at $\ell $ acting on the two
cohomology groups, note that they lie in the two different Hecke algebras $%
\mathcal{H}(\Gamma)$ and $\mathcal{H}(\Gamma_0(N^+))$ (resp., $\mathcal{H}
(\Gamma_0(pN^+))$). The compatibility with the isomorphism $\mathscr{S}$
follows from the key fact that $T_{\ell}$ can be defined in $\mathcal{H} (G)$
for $G=\Gamma_0(pN^+), \Gamma_0(N^+), \Gamma$ as a double-coset operator by
means of the \emph{same} choices of local representatives. Cf.\, \cite[Prop.
A.1]{Das}, \cite[§2.3]{LRV} for more details.

Remark \ref{RmkEC} and the isomorphisms of \eqref{shp} can be used to define
an Eisenstein/Cuspidal decomposition on $H^1(\Gamma,C_0(\mathcal{E},\mathbb{V%
}_n))$, $H^1(\Gamma,C(\mathcal{V},\mathbb{V}_n))$ and $H^1( \Gamma ,C_{har}( 
\mathbb{V}_n))$ by transporting it from $H^1(G,\mathbb{V}_n)$, where $%
G=\Gamma_0(pN^+)$, $\Gamma_0(N^+)$ or $\hat{\Gamma }_0(N^+)$.

\begin{lemma}
\label{Lemma harmonic cocycles/cusp forms} There is a Hecke equivariant
isomorphism 
\begin{equation*}
H^1( \Gamma ,C_{har}( \mathbb{V}_n))^{c}\overset{\simeq }{\rightarrow }H^1(
\Gamma _0(pN^+),\mathbb{V}_n) ^{p-new, c}\text{.}
\end{equation*}
\end{lemma}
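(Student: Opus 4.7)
The plan is to extract the claim from the long exact sequence \eqref{les} after passing to cuspidal parts and applying the Shapiro isomorphisms \eqref{shp}. More precisely, write the relevant portion of \eqref{les} as
\begin{equation*}
H^0(\Gamma,C(\mathcal{V},\mathbb{V}_n)) \to H^1(\Gamma, C_{har}(\mathbb{V}_n)) \overset{s}{\to} H^1(\Gamma,C_0(\mathcal{E},\mathbb{V}_n)) \overset{\delta}{\to} H^1(\Gamma,C(\mathcal{V},\mathbb{V}_n)),
\end{equation*}
where $\delta$ is induced by the connecting map $\varphi$ of \eqref{exact sequence edges/vertices}. By Remark \ref{RmkEC}, all four groups admit an Eisenstein/Cuspidal decomposition and both $s$ and $\delta$ split accordingly, so the sequence remains exact after taking $c$-components.

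First I would check that, under the Shapiro identifications \eqref{shp}, the map $\delta$ corresponds to the sum $\mathrm{cor}\oplus\hat{\mathrm{cor}}$ of Definition \ref{pnew}. This is the main technical point and boils down to a coset bookkeeping: using that the edges $e$ in $\mathcal{E}$ with $s(e)=v_\ast$ (resp.\ $s(e)=\hat v_\ast$) are in $\Gamma_0(N^+)$-equivariant bijection with $\Gamma_0(N^+)/\Gamma_0(pN^+)$ (resp.\ $\hat\Gamma_0(N^+)/\Gamma_0(pN^+)$), the Shapiro model of a class in $H^1(\Gamma,C_0(\mathcal{E},\mathbb{V}_n))$ as a $1$-cocycle on $\Gamma_0(pN^+)$ with values in $\mathbb{V}_n$ is mapped by $\varphi$ to the two cocycles obtained by summing over these cosets, which is exactly the description of corestriction. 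Granting this, $\ker(\delta^c)$ is identified with $H^1(\Gamma_0(pN^+),\mathbb{V}_n)^{p\text{-new},c}$ by Definition \ref{pnew}.

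Next I would show that $s^c$ is injective, equivalently that the image of $H^0(\Gamma,C(\mathcal{V},\mathbb{V}_n))$ in $H^1(\Gamma,C_{har}(\mathbb{V}_n))$ is purely Eisenstein. By the first Shapiro isomorphism in \eqref{shp},
\begin{equation*}
H^0(\Gamma,C(\mathcal{V},\mathbb{V}_n))^c \simeq H^0(\Gamma_0(N^+),\mathbb{V}_n)^{c}\oplus H^0(\hat\Gamma_0(N^+),\mathbb{V}_n)^{c},
\end{equation*}
and both summands vanish by Remark \ref{i} (for $n>0$ the groups are already zero, while for $n=0$ the cuspidal component is killed by $t_\ell$ as in \eqref{0}). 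Hence $s^c$ is injective.

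Combining the two steps, $s^c$ identifies $H^1(\Gamma,C_{har}(\mathbb{V}_n))^c$ with $\ker(\delta^c)=H^1(\Gamma_0(pN^+),\mathbb{V}_n)^{p\text{-new},c}$, and Hecke equivariance is automatic because every arrow in \eqref{les} and \eqref{shp} is Hecke-equivariant (as recalled right after \eqref{shp}). The only real obstacle is the explicit coset-level verification that $\delta$ corresponds to $\mathrm{cor}\oplus\hat{\mathrm{cor}}$; this is standard (cf.\ \cite[Prop.~A.1]{Das}, \cite[\S2.3]{LRV}) but must be done carefully to track orientations of edges and the twist by $\omega_p$ entering the second copy.
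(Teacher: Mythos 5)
Your argument is correct and follows essentially the same route as the paper: exactness of the long exact sequence \eqref{les}, identification of the image of $s$ with the $p$-new subspace via Shapiro's lemma and Definition \ref{pnew}, and injectivity from the vanishing of $H^0(\Gamma,C(\mathcal{V},\mathbb{V}_n))$ (with the cuspidal restriction only genuinely needed for $n=0$). The one point you flag as needing careful verification -- that the map induced by $\varphi$ corresponds under Shapiro to $\mathrm{cor}\oplus\hat{\mathrm{cor}}$ -- is taken for granted in the paper's proof, so your plan is, if anything, slightly more explicit about where the work lies.
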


\begin{proof}
Composing the map $s$ in \eqref{les} with Shapiro's isomorphism $\mathscr{S}
_{\mathcal{E}}$ in \eqref{shp}, we obtain a map 
\begin{equation}
H^1(\Gamma,C_{har}(\mathbb{V}_n)) \, \,\overset{\mathscr{S}_{\mathcal{E}}
\cdot s}{{\longrightarrow }} \, \, H^1(\Gamma_0(pN^+),\mathbb{V}_n),
\end{equation}
which we already argued to be Hecke equivariant. By Definition \ref{pnew}
and \eqref{shp}, $\mathscr{S}_{\mathcal{E}} \cdot s$ maps surjectively onto $%
H^1( \Gamma_0(pN^+),\mathbb{V}_n)^{p-new}$. By \cite[p.\,165]{Hi} , the $%
\Gamma_0(pN^+)$-module $\mathbf{V}_n(\mathbb{C} )$ is irreducible for $n>0$.
Hence $\mathbb{V}_n^{\Gamma_0(pN^+)}=0$ by \eqref{Vn}; since $\Gamma_0(pN^+)
\subset \Gamma_0(N^+), \hat{\Gamma }_0(N^+)$, the proposition now follows
for $n>0$ from the exactness of \eqref{les}, and \eqref{shp}.

When $n=0$, the action of $\mathcal{B}^{\times}$ on $\mathbb{V}_n$ is
trivial, whence $\mathbb{V}_n^{\Gamma_0(pN^+)}=\mathbb{V}_n$. Since $H^0(G, 
\mathbb{V}_n)^{c}=\{0\}$ both for $G=\Gamma_0(N^+)$ and $\hat{\Gamma }
_0(N^+) $ by \eqref{0}, the proposition follows as before.
\end{proof}

\vspace{0.3cm}

Let $K_p$ be a complete field extension of $\mathbb{Q}_p$. The long exact
sequence in cohomology arising from $( \text{\ref{exact sequence
vertices/edges}}) $ with $A=\mathbf{V}_n( K_p)$ is 
\begin{equation}  \label{long exact sequence vertices/edges}
\begin{matrix}
...{\rightarrow } \, H^1(\Gamma_0(N^+),\mathbf{V}_n(K_p))^2 \rightarrow H^1(
\Gamma _0( pN^+) ,\mathbf{V}_n(K_p)) \, \overset{\delta }{\rightarrow } ...
\\ 
\\ 
\overset{\delta }{\rightarrow } H^{2}( \Gamma ,\mathbf{V}_n(K_p)) \overset{
\varepsilon }{\rightarrow } H^{2}( \Gamma_0(N^+),\mathbf{V}_n(K_p))^2
\rightarrow H^{2}( \Gamma _0( pN^+) ,\mathbf{V}_n(K_p)),%
\end{matrix}%
\end{equation}
once we apply the isomorphisms of \eqref{shp}. Exactly as in \eqref{les},
all maps in \eqref{long exact sequence vertices/edges} are Hecke equivariant
and admit an Eisentein/Cuspidal decomposition.

\begin{lemma}
\label{Lemma p-new and p-old subspaces}The boundary map $\delta ^{c}$
restricts to an isomorphism 
\begin{equation*}
\delta ^{c}:\,H^1( \Gamma _0(pN^+),\mathbf{V}_n(K_p)) ^{p-new,c}\,\overset{
\simeq }{\rightarrow }\,H^{2}( \Gamma ,\mathbf{V}_n( K_p) )^{c}\text{.}
\end{equation*}
\end{lemma}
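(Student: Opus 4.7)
The plan is to exploit the long exact sequence \eqref{long exact sequence vertices/edges} together with the decomposition \eqref{decomp}, reducing the statement to two facts: (i) the kernel of $\delta$ on the cuspidal part agrees with the $p$-old cuspidal subspace, and (ii) the target $H^2(\Gamma_0(N^+),\mathbf{V}_n(K_p))^c$ vanishes, so that $\delta^c$ is surjective.

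First, I would unwind the definition of the connecting map. The map $H^1(\Gamma_0(N^+),\mathbf{V}_n(K_p))^2{\rightarrow} H^1(\Gamma_0(pN^+),\mathbf{V}_n(K_p))$ appearing in \eqref{long exact sequence vertices/edges} is, by construction, obtained from the map $\partial^*$ in \eqref{exact sequence vertices/edges} after applying the Shapiro isomorphisms of \eqref{shp}. A direct inspection at the level of cocycles (using that $\mathrm{Stab}_{\Gamma}(v_{\ast})=\Gamma_0(N^+)$, $\mathrm{Stab}_{\Gamma}(\hat v_{\ast})=\hat{\Gamma }_0(N^+)$, and $\mathrm{Stab}_{\Gamma}(e_\ast)=\Gamma_0(pN^+)$) shows that, up to the sign involved in orientation, this map is precisely $\res - \hat{\res}$ on the two factors. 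In particular its image coincides, by Definition \ref{pnew} and the conventions following it, with $H^1(\Gamma_0(pN^+),\mathbf{V}_n(K_p))^{p-old}$, so by exactness $\ker(\delta) = H^1(\Gamma_0(pN^+),\mathbf{V}_n(K_p))^{p-old}$. All maps in \eqref{long exact sequence vertices/edges} are Hecke equivariant and admit an Eisenstein/Cuspidal decomposition, so the same identity holds after taking cuspidal parts; combined with \eqref{decomp}, this yields
\[
\ker(\delta^c)=H^1(\Gamma_0(pN^+),\mathbf{V}_n(K_p))^{p-old,c},
\]
and hence the restriction of $\delta^c$ to $H^1(\Gamma_0(pN^+),\mathbf{V}_n(K_p))^{p-new,c}$ is injective.

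Second, for surjectivity I would invoke the vanishing of the cuspidal part of the $H^2$'s of $\Gamma_0(N^+)$ and $\hat{\Gamma }_0(N^+)$. By Poincar\'e duality these are dual to the compactly-supported $H^0$'s, which by the argument recalled in Remark \ref{i} (combined with \eqref{Vn} to pass to $\mathbf{V}_n(K_p)$) have vanishing cuspidal part: for $n>0$ the group $H^0$ itself is zero, whereas for $n=0$ the Hecke operators $T_\ell$ act by $\ell+1$, killing $t_\ell$. Consequently $H^2(\Gamma_0(N^+),\mathbf{V}_n(K_p))^c=0$, the map $\varepsilon^c$ in \eqref{long exact sequence vertices/edges} is zero, and by exactness $\delta^c$ is surjective onto $H^2(\Gamma,\mathbf{V}_n(K_p))^c$. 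Since the $p$-old cuspidal part maps to zero, this surjection is already realized by the restriction of $\delta^c$ to the $p$-new cuspidal part, which finishes the proof.

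The only genuinely delicate point is the identification of the connecting map with $\res-\hat{\res}$ under Shapiro's isomorphism; everything else is a packaging of results already in the excerpt. This identification, however, is a routine cocycle computation once one chooses the standard coset representatives for $\Gamma / \Gamma_0(N^+)$ and $\Gamma/\hat\Gamma_0(N^+)$ corresponding to the $\Gamma$-orbits of $v_\ast$ and $\hat v_\ast$ on $\mathcal{V}$.
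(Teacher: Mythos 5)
Your proof is correct and follows essentially the same route as the paper's: surjectivity of $\delta^c$ from the vanishing of $H^2(\Gamma_0(N^+),\mathbf{V}_n(K_p))^c$, identification of $\ker\delta^c$ with the $p$-old cuspidal subspace via the long exact sequence, and then \eqref{decomp} to conclude. The only difference is that you spell out the Shapiro-level identification of the map $H^1(\Gamma_0(N^+),\mathbf{V}_n)^2\to H^1(\Gamma_0(pN^+),\mathbf{V}_n)$ with the restriction map, which the paper's proof leaves implicit.
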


\begin{proof}
We have $H^{2}( \Gamma_0(N^+),\mathbf{V}_n(K_p))^{c}=0$ by \eqref{0}. Remark %
\ref{RmkEC} implies that \emph{taking cuspidal parts} in an exact functor.
It thus follows from \eqref{long exact sequence vertices/edges} that there
is an exact sequence 
\begin{equation}  \label{long exact sequence vertices/edges cuspidal parts}
(H^1(\Gamma_0(N^+),\mathbf{V}_n(K_p))^{c})^2\rightarrow H^1( \Gamma _0(
pN^+) ,\mathbf{V}_n(K_p))^{c}\,\,\overset{\delta ^{c}}{\rightarrow }
\,\,H^{2}( \Gamma ,\mathbf{V}_n(K_p)) ^{c}\rightarrow 0\text{.}
\end{equation}

The lemma now follows from \eqref{decomp}.
\end{proof}

\subsection{Morita-Teitelbaum's integral representations}

\label{S16}

Recall the $\mathrm{GL}_{2}$-module $\mathbf{P}_n$ and note that it admits
as a natural $\mathbb{Z}$-structure the free $\mathbb{Z}$-module $\mathbf{P}
_{n,\mathbb{Z}}$ of polynomials of degree at most $n$ with integers
coefficients. For any $\mathbb{Z}$-algebra $R$, $\mathbf{P}_n(R):=\mathbf{P}
_{n,\mathbb{Z}}\otimes R$ is endowed with a right $\mathrm{GL}_{2}( R) $
-action by the same formula. This way, $\mathbf{V}_n(R)=\mathbf{P}_n^{\vee
}( R) :=\Hom_{R}(\mathbf{P}_n(R),R)$, the dual of $\mathbf{P}_n( R) $,
inherits a left $\mathrm{GL}_{2}( R) $-action.

For any vertex $v\in \mathcal{V}$, choose any element $\gamma _{v}\in {\ 
\mathrm{GL}}_{2}(\mathbb{Q}_p)$ such that $\gamma _{v}(v)=v_{\ast }$ and set 
\begin{equation*}
\mathbf{V}_{n,v}(\mathbb{Z}_p):=\gamma _{v}^{-1}\cdot \mathbf{V}_n(\mathbb{Z}
_p) \subset \mathbf{V}_n(\mathbb{Q}_p)\text{.}
\end{equation*}

Notice that this definition does not depend on the choice of $\gamma _{v}$,
because the stabilizer of $v_{\ast }$ in ${\mathrm{PGL}}_{2}(\mathbb{Q}_p)$
is ${\mathrm{PGL}}_{2}(\mathbb{Z}_p)$, which leaves $\mathbf{V} _n(\mathbb{Z}
_p)$ invariant. Define 
\begin{equation*}
C^{int}(\mathcal{V},\mathbf{V}_n(\mathbb{Q}_p)):=\{c\in C(\mathcal{V}, 
\mathbf{V}_n(\mathbb{Q}_p)):c(v)\in \mathbf{V}_{n,v}\,\forall v\in \mathcal{%
V }\}.
\end{equation*}

Similarly, for any oriented edge $e=(v,v^{\prime })\in \mathcal{E}$, set $%
\mathbf{V}_{n,e}(\mathbb{Z}_p):=\mathbf{V}_{n,v}(\mathbb{Z}_p)\cap \mathbf{V}
_{n,v^{\prime }}(\mathbb{Z}_p)\subset \mathbf{V}_n(\mathbb{Q}_p)$ and define 
\begin{equation*}
C^{int}(\mathcal{E},\mathbf{V}_n(\mathbb{Q}_p)):=\{c\in C(\mathcal{E}, 
\mathbf{V}_n(\mathbb{Q}_p)):c(e)\in \mathbf{V}_{n,e}(\mathbb{Z}_p)\,\forall
e\in \mathcal{E}\},
\end{equation*}
which is naturally a $\mathbb{Z}_p$-module.

Introduce also the $\mathbb{Z}_p$-modules 
\begin{equation*}
C_0^{int}(\mathcal{E},\mathbf{V}_n(\mathbb{Q}_p)):=C^{int}(\mathcal{E}, 
\mathbf{V}_n(\mathbb{Q}_p))\cap C_0(\mathcal{E},\mathbf{V}_n(\mathbb{Q}_p))
\end{equation*}
and 
\begin{equation*}
C_{har}^{int}(\mathcal{E},\mathbf{V}_n(\mathbb{Q}_p)):=C^{int}(\mathcal{E}, 
\mathbf{V}_n(\mathbb{Q}_p))\cap C_{har}(\mathbf{V}_n(\mathbb{Q}_p)).
\end{equation*}

The next result of Teitelbaum should be regarded as a refinement of the
exact sequence $( \text{\ref{exact sequence edges/vertices}}) $.

\begin{proposition}
\cite[p.\,564-566]{Te2} \label{t93} For every \emph{even} integer $n\geq 0$
the natural sequence 
\begin{equation}  \label{esint}
0\,{\rightarrow }\,C_{har}^{int}( \mathbf{V}_n(\mathbb{Q}_p)) \,{\rightarrow 
}\,C_0^{int}(\mathcal{E},\mathbf{V}_n(\mathbb{Q}_p))\,{\rightarrow }
\,C^{int}(\mathcal{V},\mathbf{V}_n(\mathbb{Q}_p))\,{\rightarrow }\,0
\end{equation}
is an exact sequence of $\mathrm{PGL}_{2}( \mathbb{Q}_p) $-modules.
\end{proposition}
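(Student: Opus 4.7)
The sequence in question is the restriction of the exact sequence \eqref{exact sequence edges/vertices} (with $A=\Vn(\mathbb{Q}_p)$) to the three integral subspaces. By the very definition $\Vnv(\mathbb{Z}_p)=\gamma_v^{-1}\Vn(\mathbb{Z}_p)$, the family of lattices $\{\Vnv(\mathbb{Z}_p)\}_{v\in\mathcal{V}}$ is $\PGL_2(\mathbb{Q}_p)$-equivariant in the sense that $\gamma\cdot\Vnv(\mathbb{Z}_p)=\mathbf{V}_{n,\gamma v}(\mathbb{Z}_p)$, and the same holds for the edge lattices $\Vne(\mathbb{Z}_p)=\mathbf{V}_{n,v}(\mathbb{Z}_p)\cap\mathbf{V}_{n,v'}(\mathbb{Z}_p)$. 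Consequently $C_{har}^{int}(\Vn(\mathbb{Q}_p))$, $C_0^{int}(\mathcal{E},\Vn(\mathbb{Q}_p))$ and $C^{int}(\mathcal{V},\Vn(\mathbb{Q}_p))$ are sub-$\PGL_2(\mathbb{Q}_p)$-modules of their rational counterparts, and the restrictions of the maps in \eqref{exact sequence edges/vertices} remain $\PGL_2(\mathbb{Q}_p)$-equivariant. Injectivity of the inclusion on the left is immediate, and exactness in the middle follows from the rational exactness of \eqref{exact sequence edges/vertices} together with the definition $C_{har}^{int}=C_{har}\cap C_0^{int}$. The substance of the proposition therefore lies entirely in the surjectivity of
\[
\varphi\colon C_0^{int}(\mathcal{E},\Vn(\mathbb{Q}_p))\;\longrightarrow\; C^{int}(\mathcal{V},\Vn(\mathbb{Q}_p)),\qquad \varphi(\tilde c)(v)=\sum_{s(e)=v}\tilde c(e).
\]

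I would reduce surjectivity to the following local spanning lemma, which is the technical heart of the argument: for each $v\in\mathcal{V}$ with $p+1$ neighbors $v_0,\dots,v_p$, and for each $p$-element subset $S\subset\{0,\dots,p\}$,
\[
\Vnv(\mathbb{Z}_p)\;=\;\sum_{i\in S}\bigl(\Vnv(\mathbb{Z}_p)\cap\mathbf{V}_{n,v_i}(\mathbb{Z}_p)\bigr).
\]
By $\PGL_2(\mathbb{Q}_p)$-equivariance of $v\mapsto\Vnv$, it suffices to check this for $v=v_*$; and since the stabilizer of $v_*$ surjects onto $\PGL_2(\mathbb{F}_p)$, which acts doubly transitively on the set $\mathbb{P}^1(\mathbb{F}_p)$ of neighbors of $v_*$, all $p$-element subsets $S$ are conjugate, so one fixed choice suffices. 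At this point one carries out an explicit calculation: take the standard representatives $\gamma_{v_a}^{-1}=\bigl(\begin{smallmatrix}1&0\\a&p\end{smallmatrix}\bigr)$ for $a\in\mathbb{F}_p$ and $\gamma_{v_\infty}^{-1}=\bigl(\begin{smallmatrix}p&0\\0&1\end{smallmatrix}\bigr)$, express each intersection $\Vn(\mathbb{Z}_p)\cap\mathbf{V}_{n,v_a}(\mathbb{Z}_p)$ in the basis $\{y_0,\dots,y_n\}$ of $\Vn(\mathbb{Z}_p)$ dual to the monomial basis $\{1,x,\dots,x^n\}$ of $\mathbf{P}_n(\mathbb{Z}_p)$, and verify by linear algebra that the sum over any $p$ such intersections exhausts $\Vn(\mathbb{Z}_p)$. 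This explicit verification, valid for every even $n\geq 0$, is the main obstacle and is the content of Teitelbaum's computation in \cite{Te2}.

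Granting the local spanning lemma, one constructs a lift $\tilde c\in C_0^{int}(\mathcal{E},\Vn(\mathbb{Q}_p))$ of a given $c\in C^{int}(\mathcal{V},\Vn(\mathbb{Q}_p))$ by induction on the distance from $v_*$. At the root, the lemma (applied with the full set $S=\{0,\dots,p\}$) yields a decomposition $c(v_*)=\sum_{i=0}^p x_i$ with $x_i\in\Vn(\mathbb{Z}_p)\cap\mathbf{V}_{n,v_i}(\mathbb{Z}_p)$; one sets $\tilde c$ on the edge from $v_*$ to $v_i$ to be $x_i$, and extends by the alternating rule. At a vertex $v\neq v_*$ with parent $v'$, the value $\tilde c(e_{\mathrm{in}})\in\Vnv(\mathbb{Z}_p)\cap\mathbf{V}_{n,v'}(\mathbb{Z}_p)$ on the edge from $v$ back to $v'$ is already determined by the alternating condition, and the defect $c(v)-\tilde c(e_{\mathrm{in}})\in\Vnv(\mathbb{Z}_p)$ is decomposed, via the spanning lemma applied with $S$ equal to the set of $p$ children of $v$, as a sum of integral contributions on the child edges. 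The resulting $\tilde c$ lies in $C_0^{int}(\mathcal{E},\Vn(\mathbb{Q}_p))$ and satisfies $\varphi(\tilde c)=c$ by construction, completing the proof. The $\PGL_2(\mathbb{Q}_p)$-equivariance of the whole sequence is automatic from the equivariance of the defining lattice families.
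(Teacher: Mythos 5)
The paper does not prove this proposition; it is quoted from \cite[p.\,564-566]{Te2}, so there is no internal argument to compare against, and your proposal is in effect a reconstruction of Teitelbaum's proof. The outline is correct. Your formal reductions are sound: the equivariance $\gamma\cdot\Vnv(\Z_p)=\mathbf{V}_{n,\gamma v}(\Z_p)$ makes the three integral spaces $\PGL_2(\Q_p)$-submodules and the restricted maps equivariant; exactness at the first two positions follows from the rational exactness of \eqref{exact sequence edges/vertices} together with $C^{int}_{har}=C_{har}\cap C_0^{int}$; and the substance is, as you say, the surjectivity of $\varphi$, which you correctly reduce to the local spanning lemma, and by equivariance and transitivity of $\PGL_2(\F_p)$ on $p$-element subsets of $\PP^1(\F_p)$ to a single vertex and a single subset (transitivity on the complementary singletons already suffices; double transitivity is not needed). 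The inductive lift on shells around $v_\ast$ then works exactly as you describe, and you rightly identify the explicit lattice computation in \cite{Te2} as the one nontrivial ingredient.

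One point deserves emphasis, since it is easy to blur: the inductive step genuinely requires the $p$-element version of the spanning lemma, not the weaker assertion that all $p+1$ edge lattices together exhaust $\Vnv(\Z_p)$ --- at a non-root vertex the parent edge is already forced by the alternating condition, so only the $p$ child edges are free. You do state the $p$-element form, and it is what one must extract from \cite{Te2}. In fact the computation you sketch delivers it quickly: with $\gamma_{v_\infty}^{-1}=\mathrm{diag}(p,1)$ and $\gamma_{v_0}^{-1}=\mathrm{diag}(1,p)$ one finds, in the basis $y_0,\dots,y_n$ of $\Vn(\Z_p)$ dual to $1,x,\dots,x^n$, that $\Vn(\Z_p)\cap\mathbf{V}_{n,v_\infty}(\Z_p)=\bigoplus\nolimits_j p^{\max(0,\,j-m)}\Z_p\,y_j$ while $\Vn(\Z_p)\cap\mathbf{V}_{n,v_0}(\Z_p)=\bigoplus\nolimits_j p^{\max(0,\,m-j)}\Z_p\,y_j$, and for each $j$ one of the two exponents vanishes, so these two edge lattices already sum to $\Vn(\Z_p)$. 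Since the transitivity reduction lets one move any $p$-element subset of neighbors to one containing both $v_\infty$ and $v_0$, the full $p$-element spanning lemma follows and your surjectivity argument is complete.
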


In particular we may regard the above sequence as an exact sequence of $%
\Gamma $-modules by means of the identification $\iota _p:\,\mathcal{B}
_p\,\simeq \,\mathrm{M}_{2}( \mathbb{Q}_p) $.

\bigskip

As a piece of notation, by \emph{extended norm} on a space $A$ we mean a
function $\Vert \cdot \Vert: A{\rightarrow } {\mathbb{R}}_{\geq 0} \cup
\{+\infty \}$ satisfying the usual properties of a norm, extended in a
natural way to the semigroup of values ${\mathbb{R}}_{\geq 0} \cup \{+\infty
\}$.

Let $K_p/\mathbb{Q}_p$ be a complete field extension, with ring of integers $%
R_p$, which we fix throughout the article.

Let $\left \vert\cdot \right\vert$ denote the absolute value of $K_p$. Let $%
\left\vert \cdot \right \vert _{L_{\ast }}$ and $\vert \cdot \vert _{ 
\widehat{L}_{\ast }}$ be two norms on $\mathbf{P}_n(K_p)$. We require the
first one to be $\mathrm{GL}_{2}(L_{\ast })=\mathrm{GL}_{2}(\mathbb{Z}_p)$
-invariant and the second one to be $\mathrm{GL}_{2}(\widehat{L}_{\ast })$
-invariant (cf.\,§\ref{S12} for notations). Choose also a $\mathrm{GL}
_{2}(L_{\ast })\cap \mathrm{GL}_{2}( \widehat{L}_{\ast }) $-invariant norm $%
\left\vert \cdot \right\vert _{L_{\ast },\widehat{L}_{\ast }}$ on $\mathbf{P}
_n(K_p)$. We may choose for example $\left\vert \cdot \right\vert _{L_{\ast
},\widehat{L}_{\ast }} = \left\vert \cdot\right\vert _{L_{\ast
}}:=\left\vert \cdot \right\vert $ to be the supremum of the absolute values
of the coefficients of a polynomial and set $\left\vert \cdot\right\vert _{ 
\widehat{L}_{\ast }}:=\left\vert \omega _p^{-1}\cdot \right\vert _{L_{\ast
}} $. By duality we can consider the corresponding norms on $\mathbf{V}
_n(K_p)$.

Define extended norms on $C(\mathcal{V}^+,\mathbf{V}_n(K_p))$, $C(\mathcal{V}
^-,\mathbf{V}_n(K_p))$ and $C_0(\mathcal{E},\mathbf{V}_n (K_p))$ by the
rules 
\begin{equation*}
\left\Vert c\right\Vert _+:=\mathrm{sup}_{v\in \mathcal{V}^+}|\gamma
_{v}\cdot c(v)|_{L_{\ast }},
\end{equation*}
\begin{equation*}
\left\Vert c\right\Vert _-:=\mathrm{sup}_{v\in \mathcal{V}^-}|\gamma
_{v}\cdot c(v)|_{L_{\ast }}.
\end{equation*}
\begin{equation*}
\left\Vert c\right\Vert := \mathrm{sup}_{e\in \mathcal{E}^+}|\gamma _{e}
\cdot c(e)|.
\end{equation*}

Here, $\gamma_v$ (resp. $\gamma _{e}$) is any element in $\mathrm{GL}_{2}( 
\mathbb{Q}_p)$ such that $\gamma _{v}(v)=v_{\ast }$ (resp. $\gamma
_{e}(e)=e_{\ast }$). The invariance properties of the above norms imply that
the above definitions do not depend on the choice of the sets $\left\{
\gamma _{v}\right\} $ and $\left\{ \gamma _{e}\right\} $.

\vspace{0.4cm}

Let $C(K_p)$ denote either $C(\mathcal{V},\mathbf{V}_n(K_p))):=C(\mathcal{V}
^+,\mathbf{V}_n(K_p))\oplus C(\mathcal{V}^-,\mathbf{V}_n(K_p))$, $C_0( 
\mathcal{E},\mathbf{V}_n(K_p))$ or $C_{har}(\mathbf{V}_n(K_p))$ and write $%
C^{b}(K_p):=\{c\in C:\left\Vert c\right\Vert <\infty \}$. In all these cases
the restriction of $\left\Vert \cdot \right\Vert $ to $C^{b}(K_p)$ is a norm
with respect to which $C^{b}(K_p)$ is a Banach space over $K_p$.

\begin{lemma}
\label{bint}

\begin{enumerate}
\item[(i)] $C^{b}(K_p)=C^{b}(\mathbb{Q}_p)\hat{\otimes} K_p$.

\item[(ii)] $C^{b}(\mathbb{Q}_p)=C^{int}(\mathbb{Q}_p)\otimes_{\mathbb{Z}_p} 
\mathbb{Q}_p$.
\end{enumerate}
\end{lemma}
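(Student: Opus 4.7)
The plan is to prove (ii) first as a direct unwinding of the definitions of the extended norms, identifying $C^{int}(\mathbb{Q}_p)$ as the closed unit ball of the $\mathbb{Q}_p$-Banach space $C^b(\mathbb{Q}_p)$, and then to deduce (i) by a parallel analysis at the level of integral structures over $R_p$. The three cases (vertex, edge, harmonic) can be treated in parallel: the harmonic case is the closed subspace of the edge case cut out by the sum-around-vertex relation, which is preserved by scalar extension.

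For (ii), with the choice of norms fixed in the paragraph preceding the lemma, $|\cdot|_{L_\ast}$ on $\mathbf{V}_n(K_p)$ has unit ball $\mathbf{V}_n(R_p)$. Hence the requirement $|\gamma_v c(v)|_{L_\ast}\leq 1$ for $v\in \mathcal{V}^+$ is equivalent, by the $\mathrm{GL}_2(L_\ast)$-invariance of that norm, to $c(v)\in \gamma_v^{-1}\mathbf{V}_n(\mathbb{Z}_p)=\mathbf{V}_{n,v}(\mathbb{Z}_p)$. The analogous equivalences for $\mathcal{V}^-$ and for edges use the $\mathrm{GL}_2(\widehat{L}_\ast)$- and $\mathrm{GL}_2(L_\ast)\cap \mathrm{GL}_2(\widehat{L}_\ast)$-invariance of the corresponding norms. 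Therefore $C^{int}(\mathbb{Q}_p)$ coincides with the closed unit ball of $C^b(\mathbb{Q}_p)$. Since every element $c$ of finite norm satisfies $\|p^k c\|\leq 1$ for $k$ sufficiently large, one has $C^b(\mathbb{Q}_p)=\bigcup_{k\geq 0} p^{-k}C^{int}(\mathbb{Q}_p)=C^{int}(\mathbb{Q}_p)\otimes_{\mathbb{Z}_p}\mathbb{Q}_p$, proving (ii).

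For (i), the scalar-extension map $C^b(\mathbb{Q}_p)\otimes_{\mathbb{Q}_p} K_p\to C^b(K_p)$, $c\otimes \lambda\mapsto \lambda c$, is an isometry. This uses the identification $\mathbf{V}_n(K_p)=\mathbf{V}_n(\mathbb{Q}_p)\otimes_{\mathbb{Q}_p} K_p$ (no completion required, as $\mathbf{V}_n$ has finite rank) and that the fixed norm on $\mathbf{V}_n(K_p)$ extends the one on $\mathbf{V}_n(\mathbb{Q}_p)$. Passing to the Banach completion yields a continuous injection $C^b(\mathbb{Q}_p)\hat{\otimes} K_p\hookrightarrow C^b(K_p)$. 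Combining with (ii) applied to the unit balls on both sides, the surjectivity reduces to the integral identity
\[
C^{int}(\mathbb{Q}_p)\hat{\otimes}_{\mathbb{Z}_p} R_p \;=\; \{c\in C(K_p):\|c\|\leq 1\},
\]
where $\hat{\otimes}_{\mathbb{Z}_p}$ denotes $p$-adic completion. Both sides are $p$-adically complete and separated, so the equality can be checked modulo $p^k$ for each $k\geq 1$. The fibrewise identity $\mathbf{V}_{n,v}(\mathbb{Z}_p)\otimes_{\mathbb{Z}_p} R_p/p^k=\mathbf{V}_{n,v}(R_p/p^k)$, valid because each $\mathbf{V}_{n,v}(\mathbb{Z}_p)$ is $\mathbb{Z}_p$-free of rank $n+1$ (being a conjugate of $\mathbf{V}_n(\mathbb{Z}_p)$), matches the two reductions termwise; taking $\varprojlim_k$ and inverting $p$ completes the proof of (i).

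The main obstacle is ensuring that the $p$-adic completion genuinely intertwines the infinite product over $\mathcal{V}$ (or $\mathcal{E}$) with $\otimes_{\mathbb{Z}_p}R_p$. The uncompleted tensor product fails to commute with infinite products when $R_p$ is infinite-dimensional over $\mathbb{Z}_p$, and the completion is precisely what allows $p$-adically convergent series of simple tensors to realise arbitrary bounded $\mathbf{V}_n(K_p)$-valued functions on the tree. Controlling this convergence layer by layer in the $p$-adic filtration is the technical crux; the uniform finite rank of the fibres $\mathbf{V}_{n,v}$ is what makes the inverse-limit comparison formal.
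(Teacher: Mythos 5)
Your proof of (ii) agrees with the paper's: the content is that the fixed $\mathrm{GL}_2(\mathbb{Z}_p)$-invariant norm $|\cdot|_{L_\ast}$ on $\mathbf{V}_n(\mathbb{Q}_p)$ has unit ball $\mathbf{V}_n(\mathbb{Z}_p)$, so that $\|c\|\leq 1$ is equivalent to $c(v)\in\mathbf{V}_{n,v}(\mathbb{Z}_p)$ for all $v$, and one then scales to pass from the unit ball to the whole space. (A small imprecision: for vertices the paper uses $|\gamma_v\cdot c(v)|_{L_\ast}$ on both $\mathcal{V}^+$ and $\mathcal{V}^-$, with $\gamma_v(v)=v_\ast$ in either case; the $\widehat{L}_\ast$-norm only enters for edges.)

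For (i) your route is genuinely different from the paper's. The paper truncates $c\in C^b(K_p)$ to functions $c_r$ supported on $\{v_1,\dots,v_r\}$ and claims $c_r\to c$; you instead reduce to the integral claim $C^{int}(\mathbb{Q}_p)\hat{\otimes}_{\mathbb{Z}_p} R_p = \{c:\|c\|\le 1\}$, propose to verify it modulo $p^k$, and pass to the inverse limit. But the step you describe as making the comparison ``formal'' is precisely where the argument fails. What you need modulo $p^k$ is
\[
\Bigl(\prod_{v\in\mathcal{V}}\mathbf{V}_{n,v}(\mathbb{Z}/p^k)\Bigr)\otimes_{\mathbb{Z}/p^k}(R_p/p^k)
\;\overset{?}{=}\;
\prod_{v\in\mathcal{V}}\bigl(\mathbf{V}_{n,v}(\mathbb{Z}/p^k)\otimes_{\mathbb{Z}/p^k}(R_p/p^k)\bigr),
\]
and the fibrewise identity $\mathbf{V}_{n,v}(\mathbb{Z}_p)\otimes R_p/p^k = \mathbf{V}_{n,v}(R_p/p^k)$ does not yield it: tensoring with a fixed module commutes with an infinite direct product only when that module is finitely presented, i.e.\ here only when $R_p/p^k$ is finite over $\mathbb{Z}/p^k$, i.e.\ only when $K_p$ is a finite extension of $\mathbb{Q}_p$. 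When $R_p$ is infinite over $\mathbb{Z}_p$ (say $K_p=\mathbb{C}_p$), the left-hand side consists of functions $c$ all of whose reductions $\gamma_v\, c(v)\bmod p^k$ lie in $\mathbf{V}_n(\mathbb{Z}/p^k)\otimes L$ for a single finitely generated submodule $L\subset R_p/p^k$, independent of $v$; this is a proper submodule of the right-hand side. Passing to $\varprojlim_k$ does not repair the defect — the $p$-adic completion of $C^{int}(\mathbb{Q}_p)\otimes R_p$ is $\varprojlim_k$ of these left-hand sides, which remains a proper closed subspace of $\prod_v\mathbf{V}_{n,v}(R_p)$. So the final paragraph of your proposal, asserting that the completion ``is precisely what allows'' arbitrary bounded functions to be realised, is exactly the unjustified step. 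Your argument (like the paper's: $\|c-c_r\|=\sup_{v\notin\{v_1,\dots,v_r\}}|\gamma_v\, c(v)|$ need not tend to $0$ for a bounded $c$ on an infinite set) is only valid for $K_p/\mathbb{Q}_p$ finite — which is in fact all the later sections use, since there $K_p=\mathbb{Q}_{p^2}$ — but if you want to prove (i) as stated you either need to restrict the hypothesis on $K_p$ or supply an idea that neither you nor the paper currently has.
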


\begin{proof}
We sketch a proof only for $\mathcal{V}$, as the remaining cases work
similarly. For (i), there is in fact a natural inclusion $C^{b}(\mathbb{Q}
_p) \hat{\otimes} K_p{\hookrightarrow } C^{b}(K_p)$, which we wish to show
that is an isomorphism. Enumerate the set of vertices: $\mathcal{V}=\{ v_1,
..., v_r,...\}$. Given $c\in C^{b}(K_p)$, define $c_r$ by $c_r(v)=c(v)$ if $%
v\in \{ v_1, ..., v_r\}$; $c_r(v)=0$ otherwise. Note that $c_r\in C^{b}( 
\mathbb{Q} _p)\otimes K_p\subset C^{b}(\mathbb{Q}_p)\hat{\otimes} K_p$ and
that the sequence $\{ c_r\}$ converges to $c$. This shows (i).

As for (ii), the inclusion $C^{int}(\mathbb{Q}_p)\otimes_{\mathbb{Z}_p} 
\mathbb{Q}_p\subset C^{b}(\mathbb{Q}_p)$ follows from the definitions: given 
$c\in C^{int}(\mathbb{Q}_p)$, $\left\Vert c\right\Vert = \sup_v \left\vert
\gamma_v\cdot c(v)\right \vert$ is bounded because $\gamma_v\cdot c(v)\in 
\mathbf{V}_n(\mathbb{Z}_p)$. As for the opposite inclusion, let $c\in C( 
\mathbb{Q}_p)$ be such that $\left\Vert c\right\Vert = B<\infty $. Then $c$
can be replaced by a scalar multiple of it such in a way that $\left\Vert
c\right\Vert = \sup_v \left\vert \gamma_v\cdot c(v)\right \vert\leq 1$. This
implies that $c\in C^{int}(\mathbb{Q}_p)$.
\end{proof}

Next corollary now follows from Proposition \ref{t93} and Lemma \ref{bint}.

\begin{corollary}
\label{Corollary fundamental exact sequence} For every \emph{even} integer $%
n\geq 0$ the natural sequence 
\begin{equation}
0\,{\rightarrow }\,C_{har}^{b}( \mathbf{V}_n(K_p)) \,{\rightarrow }
\,C_0^{b}( \mathcal{E},\mathbf{V}_n(K_p))\,{\rightarrow }\,C^{b}(\mathcal{V}
, \mathbf{V} _n(K_p))\,{\rightarrow }\,0
\end{equation}
is an exact sequence of $\mathrm{PGL}_{2}( \mathbb{Q}_p) $-modules.
\end{corollary}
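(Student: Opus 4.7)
The strategy is to reduce from the Banach setting over $K_p$ to the integral setting over $\mathbb{Z}_p$, where Proposition \ref{t93} is directly applicable, using the two identifications of Lemma \ref{bint}.

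First I would record that the three maps in the statement are the restrictions of the natural $\mathrm{PGL}_2(\mathbb{Q}_p)$-equivariant maps appearing in \eqref{exact sequence edges/vertices} (inclusion, and the sum-of-edges map $\varphi$); so equivariance and the relations $\varphi \circ (\text{inclusion})=0$ and injectivity of $C_{har}^b \hookrightarrow C_0^b$ are automatic. It remains to prove exactness in the middle and surjectivity on the right. Exactness in the middle is immediate: if $c\in C_0^b(\mathcal{E},\mathbf{V}_n(K_p))$ satisfies $\varphi(c)=0$, then $c$ is harmonic by definition, hence $c\in C_{har}^b(\mathbf{V}_n(K_p))$.

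For surjectivity on the right, the plan is the following descent in two steps. By Proposition \ref{t93}, the sequence
\[
0\to C_{har}^{int}(\mathbf{V}_n(\mathbb{Q}_p))\to C_0^{int}(\mathcal{E},\mathbf{V}_n(\mathbb{Q}_p))\to C^{int}(\mathcal{V},\mathbf{V}_n(\mathbb{Q}_p))\to 0
\]
is exact. Since $\mathbb{Q}_p$ is flat over $\mathbb{Z}_p$, tensoring with $\mathbb{Q}_p$ preserves exactness; combined with Lemma \ref{bint}(ii) this yields the exactness of
\[
0\to C_{har}^{b}(\mathbf{V}_n(\mathbb{Q}_p))\to C_0^{b}(\mathcal{E},\mathbf{V}_n(\mathbb{Q}_p))\to C^{b}(\mathcal{V},\mathbf{V}_n(\mathbb{Q}_p))\to 0.
\]
In particular the surjection here is bounded with bounded set-theoretic lift (scaling an integral lift back). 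Finally, to pass from $\mathbb{Q}_p$ to $K_p$, I would apply Lemma \ref{bint}(i): each of the three Banach spaces appearing in the desired sequence is obtained from its $\mathbb{Q}_p$-counterpart by completed tensor product with $K_p$. Given any $c \in C^{b}(\mathcal{V},\mathbf{V}_n(K_p))$, I would approximate it by a Cauchy sequence of elements lying in the image of $C^{b}(\mathcal{V},\mathbf{V}_n(\mathbb{Q}_p))\otimes_{\mathbb{Q}_p} K_p$, lift each term to $C_0^{b}(\mathcal{E},\mathbf{V}_n(\mathbb{Q}_p))\otimes_{\mathbb{Q}_p} K_p$ using the $\mathbb{Q}_p$-surjectivity just established (with uniform control of the norm of the lift, which is available because the $\mathbb{Q}_p$-surjection comes from a map of integral $\mathbb{Z}_p$-modules and is therefore strict), and take the limit inside the Banach space $C_0^{b}(\mathcal{E},\mathbf{V}_n(K_p))$.

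The main obstacle is precisely this last completion step: one needs uniform norm control on lifts in order to guarantee that a Cauchy sequence downstairs admits a Cauchy sequence of lifts upstairs. I expect to handle this by checking that, on the integral sequence of Proposition \ref{t93}, the surjection admits a set-theoretic section which is bounded for the sup-norm defined in the paper (this is essentially the content of Teitelbaum's construction, where a function on vertices is lifted to a function on edges by a local recipe that preserves the lattices $\mathbf{V}_{n,v}$, $\mathbf{V}_{n,e}$). Once that uniform bound is available, the exactness after $\hat{\otimes}_{\mathbb{Q}_p}K_p$ is formal and yields Corollary \ref{Corollary fundamental exact sequence}.
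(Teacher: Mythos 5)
Your argument is correct and follows essentially the same route as the paper, which simply deduces the Corollary from Proposition~\ref{t93} together with Lemma~\ref{bint}. The boundedness-of-lifts point you flag for the $\hat{\otimes}\,K_p$ step is indeed the crux, and it is automatic here: the integral surjection $C_0^{int}(\mathcal{E},\mathbf{V}_n(\mathbb{Q}_p))\twoheadrightarrow C^{int}(\mathcal{V},\mathbf{V}_n(\mathbb{Q}_p))$ is a surjection of the unit balls for the lattice norms, so the induced map of $\mathbb{Q}_p$-Banach spaces is strict with norm-nonincreasing lifts, and completed tensor product with $K_p$ then preserves exactness.
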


Again we may regard the above sequence as an exact sequence of $\Gamma $
-modules by means of the identification $\iota _p:\,\mathcal{B}_p\,\simeq \, 
\mathrm{M}_{2}( \mathbb{Q}_p) $

\section{$p$-adic integration and a ${\mathcal{L}}$-invariant}

\label{S2}

\subsection{The cohomology of distributions and harmonic cocyles}

\label{S21}

Let $\mathcal{H}_p$ denote the $p$-adic upper half-plane over $\mathbb{Q}_p$
. It is a rigid analytic variety over $\mathbb{Q}_p$ such that $\mathcal{H}
_p(K_p)=K_p\setminus \mathbb{Q}_p$ for every complete field extension $K_p/ 
\mathbb{Q}_p$. Let $\mathcal{O}_{\mathcal{H}_p}$ denote the ring of entire
functions of $\mathcal{H}_p$, which is a Fr\'echet space over $\mathbb{Q}_p$
(cf.\,\cite[Prop.\,1.2.6]{DT}). For any even integer $k\geq 2$, write ${\ 
\mathcal{O}}_{\mathcal{H}_p}(k)$ for the ring ${\mathcal{O}}_{\mathcal{H}_p}$
equipped with the action of $\mathrm{GL}_2(\mathbb{Q}_p)$ given by 
\begin{equation*}
f|\gamma = \frac{\det(\gamma)^{k/2}}{(cz+d)^k}\cdot f(\gamma z), \text{ for }
\gamma= 
\begin{pmatrix}
a & b \\ 
c & d%
\end{pmatrix}
, f\in {\mathcal{O}}_{\mathcal{H}_p}.
\end{equation*}

Let $\mathcal{H}^{int}_p$ denote the formal scheme over $\mathbb{Z}_p$
introduced by Mumford in \cite{Mu} (cf.\,also \cite[p.\,567]{Te2}). The
rigid analytic space associated with its generic fiber is $\mathcal{H}_p$.
The dual graph of its special fiber is the tree $\mathcal{T} _p $ (cf.\,\cite%
{DT} and \cite{Te2} for a detailed discussion).

Let $\omega_{\mathbb{Z}_p}$ denote the sheaf on $\mathcal{H}^{int}_p$
introduced in \cite[Def.\,10]{Te2}, such that $\omega_{\mathbb{Z}_p}\otimes 
\mathbb{Q}_p$ is the sheaf $\omega $ of rigid analytic differential forms on 
$\mathcal{H}_p$. The map $f(z)\mapsto f(z) dz^{k/2}$ induces an isomorphism
of ${\mathrm{PGL}}_2(\mathbb{Q}_p)$-modules between $\mathcal{O}_{\mathcal{H}
_p}(k)$ and $H^0(\mathcal{H}_p, \omega^{k/2})$. Set 
\begin{equation*}
\mathcal{O}^b_{\mathcal{H}_p}(k):=H^0(\mathcal{H}^{int}_p, \omega_{\mathbb{Z}
_p}^{k/2})\cdot dz^{-k/2}\otimes_{\mathbb{Z}_p}\mathbb{Q}_p \,\subset \, 
\mathcal{O}_{\mathcal{H}_p}(k).
\end{equation*}

Recall that $n:=k-2\geq 0$. As it follows from e.g. \cite[§2.2.4]{DT} or 
\cite[Theorem 15]{Te2}, the residue map on $\mathcal{O}_{\mathcal{H}_p}(k)$
yields an epimorphism of ${\mathrm{PGL}}_{2}(\mathbb{Q}_p)$-modules 
\begin{equation}  \label{res}
\mathrm{Res}:\mathcal{O}_{\mathcal{H}_p}(k)\,\twoheadrightarrow \,C_{har}( 
\mathbf{V}_n(\mathbb{Q}_p) ) \text{.}
\end{equation}

The following deep result is proved in \cite[p.\,569-574]{Te2}, and will be
crucial for our purposes.

\begin{proposition}
\label{tei} The map \emph{Res} restricts to an isomorphism of ${\mathrm{PGL}}
_2(\mathbb{Q}_p)$-modules 
\begin{equation}
\mathrm{Res}: \mathcal{O}^b_{\mathcal{H}_p}(k) \overset{\sim}{{\
\longrightarrow } }C^b_{har}(\mathbf{V}_n(\mathbb{Q}_p)).
\end{equation}
\end{proposition}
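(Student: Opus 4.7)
The plan is to reduce the statement to an assertion about integral $\mathbb{Z}_p$-structures, where Proposition \ref{t93} and the formal geometry of Mumford's scheme $\mathcal{H}^{int}_p$ take over. By definition, $\mathcal{O}^b_{\mathcal{H}_p}(k) = H^0(\mathcal{H}^{int}_p, \omega_{\mathbb{Z}_p}^{k/2}) \cdot dz^{-k/2} \otimes_{\mathbb{Z}_p} \mathbb{Q}_p$, while Lemma \ref{bint}(ii) gives $C^b_{har}(\mathbf{V}_n(\mathbb{Q}_p)) = C^{int}_{har}(\mathbf{V}_n(\mathbb{Q}_p)) \otimes_{\mathbb{Z}_p} \mathbb{Q}_p$. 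Hence it is enough to verify that $\mathrm{Res}$ restricts to an isomorphism
\begin{equation*}
H^0(\mathcal{H}^{int}_p, \omega_{\mathbb{Z}_p}^{k/2}) \cdot dz^{-k/2} \, \overset{\sim}{\longrightarrow} \, C^{int}_{har}(\mathbf{V}_n(\mathbb{Q}_p)),
\end{equation*}
after which $\mathrm{PGL}_2(\mathbb{Q}_p)$-equivariance follows automatically from the equivariance of \eqref{res} and the $\mathrm{PGL}_2(\mathbb{Q}_p)$-invariance of the formal model.

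To prove this integral statement, I would compute $H^0(\mathcal{H}^{int}_p, \omega_{\mathbb{Z}_p}^{k/2})$ via the standard acyclic covering of $\mathcal{H}^{int}_p$ by affine formal opens indexed by the vertices and edges of the tree $\mathcal{T}_p$, which is the dual graph of the special fiber. The technical heart of the argument, carried out in \cite[pp.\,569--574]{Te2}, is an explicit local computation in suitable parameters on each chart: the integral sections of $\omega_{\mathbb{Z}_p}^{k/2} \cdot dz^{-k/2}$ on the vertex and edge charts are identified respectively with the integral submodules $\mathbf{V}_{n,v}(\mathbb{Z}_p)$ and $\mathbf{V}_{n,e}(\mathbb{Z}_p)$, and the residue along the formal annulus associated with each edge matches the abstract residue map. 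Injectivity is then the rigid-analytic fact that a bounded $k/2$-form whose residue along every annulus vanishes must itself vanish; surjectivity is obtained by assembling, from a bounded harmonic cocycle, a genuinely convergent differential via a Poisson-type construction, whose convergence is guaranteed by the boundedness hypothesis. The exact sequence \eqref{esint} ensures that harmonicity exactly matches the gluing condition required for a global section.

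The main obstacle is precisely the local identification just described, together with surjectivity. One must pin down local parameters on every formal chart of $\mathcal{H}^{int}_p$, verify that the $\mathrm{GL}_2(\mathbb{Z}_p)$-integrality of $\omega_{\mathbb{Z}_p}^{k/2}$-sections is carried to the integrality of polynomials and dual vectors under the relevant changes of coordinate, and track orientations carefully so that the alternation property defining $C_0(\mathcal{E}, -)$ is preserved. Surjectivity is the truly delicate point, since it requires producing, from abstract cocycle data, a genuine bounded rigid-analytic function on $\mathcal{H}_p$; the Poisson-integral argument of \cite{Te2} is what makes this possible and is the source of most of the technical work.
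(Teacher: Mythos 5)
The paper provides no proof of this proposition: it is attributed directly to \cite[pp.\,569--574]{Te2}, which is precisely what your proposal reduces to after the (correct) preliminary observation, via Lemma \ref{bint}(ii) and the definition of $\mathcal{O}^b_{\mathcal{H}_p}(k)$, that both sides are $\mathbb{Z}_p$-lattices tensored with $\mathbb{Q}_p$, and that equivariance is inherited from \eqref{res}. Your gloss on the content of the cited pages is plausible, though the ``Poisson-integral'' surjectivity argument you invoke is more in the spirit of \cite{Te1}; the relevant passage of \cite{Te2} carries out a direct \v{C}ech-type computation on the formal model $\mathcal{H}^{int}_p$ matching integral sections of $\omega_{\mathbb{Z}_p}^{k/2}$ on vertex and edge charts with the lattices $\mathbf{V}_{n,v}(\mathbb{Z}_p)$ and $\mathbf{V}_{n,e}(\mathbb{Z}_p)$, rather than an explicit Poisson kernel.
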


As in the previous section, fix a complete field extension $K_p/\mathbb{Q}_p$
.

\begin{definition}
Let $\mathcal{A}_n( \mathbb{P}^1( \mathbb{Q}_p), K_p)$ be the space of $K_p$
-valued locally analytic functions on $\mathbb{P}^1( \mathbb{Q}_p) $ with a
pole of order at most $n$ at $\infty $. More precisely, an element $f\in 
\mathcal{A}_n$ is a locally analytic function $f: \mathbb{Q}_p {\rightarrow }
K_p$ for which there exists an integer $N$ such that $f$ is locally analytic
on $\{ z\in \mathbb{Q}_p: {\func{ord}}_p(x)\geq N\}$ and admits a convergent
expansion 
\begin{equation*}
f(z)=a_n z^n + a_{n-1}z^{n-1} + ... +a_0+\sum_{r\geq 1} a_{-r}z^{-r}
\end{equation*}
on $\{ z\in \mathbb{Q}_p: {\func{ord}}_p(z)<N\}$.
\end{definition}

The space $\mathcal{A}_{n}(\mathbb{P}^{1}(\mathbb{Q}_{p}),K_{p})$ carries a
right action of ${\mathrm{GL}}_{2}(\mathbb{Q}_{p})$ defined by the rule $%
f\cdot \gamma =\frac{(cx+d)^{n}}{\det (\gamma )^{n/2}}\cdot f(\frac{ax+b}{
cx+d})$, for any $f\in \mathcal{A}_{n}(\mathbb{P}^{1}(\mathbb{Q}_{p}),K_{p})$
and $\gamma =( 
\begin{smallmatrix}
a & b \\ 
c & d%
\end{smallmatrix}
)\in {\mathrm{GL}}_{2}(\mathbb{Q}_{p})$. Note that $\mathbf{P}_{n}(K_{p})$
is a natural ${\mathrm{GL}}_{2}(\mathbb{Q}_{p})$-submodule of it.

\begin{definition}
Write $\mathcal{D}_n( \mathbb{P}^1(\mathbb{Q}_p),K_p) $ and $\mathcal{D}
_n^0( \mathbb{P}^1(\mathbb{Q}_p),K_p)$ for the strong continuous dual of $%
\mathcal{\ A}_n( \mathbb{P}^1( \mathbb{Q}_p), K_p )$ and of its quotient by $%
\mathbf{P} _n(K_p)$, respectively.
\end{definition}

These modules of distributions inherit from $\mathcal{A}_n( \mathbb{P}^1( 
\mathbb{Q}_p), K_p )$ a left action of ${\mathrm{GL}}_{2}(\mathbb{Q}_p)$. As
explained in \cite[§2.1.1]{DT}, $\mathcal{D}_n^0(\mathbb{P}^1(\mathbb{Q}
_p),K_p)$ is a Fr\'echet space over $K_p$ and $\mathcal{D}_n^0(\mathbb{P}^1( 
\mathbb{Q}_p),K_p)=\mathcal{D}_n^0(\mathbb{P}^1(\mathbb{Q}_p),\mathbb{Q}_p) 
\hat{\otimes}K_p$.

Morita's (or sometimes also called Schneider-Teitelbaum) duality, yields an
isomorphism $\mathcal{D}^0_n(\mathbb{P}^1(\mathbb{Q}_p),\mathbb{Q}_p) 
\overset{\sim}{{\longrightarrow } } \mathcal{O}_{\mathcal{H}_p}(k)$ which
induces an isomorphism 
\begin{equation}  \label{I}
I: \mathcal{D}^0_n(\mathbb{P}^1(\mathbb{Q}_p),K_p) \overset{\sim}{{\
\longrightarrow } } \mathcal{O}_{\mathcal{H}_p}(k)\hat{\otimes }K_p.
\end{equation}

We refer the reader to \cite[§2.2]{DT} for more details; in the sequel, we
shall freely identify these two spaces. Define 
\begin{equation*}
\mathcal{D}^0_n(\mathbb{P}^1(\mathbb{Q}_p),K_p)^b:=I^{-1}(\mathcal{O}^b_{ 
\mathcal{H}_p}(k)\hat{\otimes}K_p)\subset \mathcal{D}^0_n(\mathbb{P}^1( 
\mathbb{Q}_p),K_p).
\end{equation*}

\begin{remark}
As a consequence of a variant of the theorem of Amice-Velu-Vishik
(cf.\,e.g.\,\cite[Theorem 2.3.2]{DT}), the space $\mathcal{D}^0_n(\mathbb{P}
^1(\mathbb{Q}_p),K_p)^b$ can alternatively be described as the subspace of
distributions $\mu \in \mathcal{D}^0_n(\mathbb{P}^1(\mathbb{Q}_p),K_p)$ for
which there is a constant $A$ such that, for all $i\geq 0$, $j\geq 0$, and
all $a\in \mathbb{Z}_p$, 
\begin{equation*}
|\mu((x-a)^i | a+p^j\mathbb{Z}_p)|\leq p^{ A - j(i-1-k/2)}.
\end{equation*}

A distribution $\mu $ satisfying the above condition is then completely
determined.
\end{remark}

Lemma \ref{bint} and \eqref{I} yields an epimorphism of $\mathrm{GL}_{2}( 
\mathbb{Q}_p)$-modules 
\begin{equation*}
r:\mathcal{D}_n^0(\mathbb{P}^1(\mathbb{Q}_p),K_p)\,\twoheadrightarrow
\,C_{har}( \mathbf{V}_n(K_p)) \text{,}
\end{equation*}
which can be described purely in terms of distributions by the rule 
\begin{equation*}
r( \mu ) ( e) ( P) =\int\nolimits_{U_{e}}P( t) d\mu ( t) :=\mu ( P\cdot \chi
_{U_{e}}) \text{.}
\end{equation*}

Here $U_{e}\subset \mathbb{P}^1( \mathbb{Q}_p) $ is the open compact subset
of $\mathbb{P}^1( \mathbb{Q}_p)$ corresponding to the ends leaving from the
oriented edge $e$, and $\chi _{U_{e}}$ stands for its characteristic
function.

By Proposition \ref{tei} and Lemma \ref{bint}, it restricts to an
isomorphism 
\begin{equation}  \label{Morita}
r:\mathcal{D}_n^0(\mathbb{P}^1(\mathbb{Q}_p),K_p)^{b}\overset{\sim }{
\longrightarrow }C_{har}^{b}(\mathbf{V}_n(K_p)),
\end{equation}
which by abuse of notation we denote with the same symbol $r$. The same
abuse will be made for the several maps that $r$ induces in cohomology, as
below.

The following theorem is the basic piece which shall allow us to introduce a 
$p$-adic integration theory on indefinite quaternion algebras.

\begin{theorem}
\label{2parts} There is a commutative diagram of morphisms of Hecke-modules 
\begin{equation*}
\begin{matrix}
H^1(\Gamma,\mathcal{D}_n^0(\mathbb{P}^1(\mathbb{Q}_p))^b) & {\longrightarrow 
} & H^1(\Gamma,\mathcal{D}_n^0(\mathbb{P}^1(\mathbb{Q}_p))) \\ 
\downarrow &  & \downarrow \\ 
H^1(\Gamma,C_{har}^{b}(\mathbf{V}_n(K_p))) & {\longrightarrow } & H^1(
\Gamma ,C_{har}( \mathbf{V}_n(K_p))) \\ 
&  & 
\end{matrix}%
\end{equation*}
such that the composition $r: H^1(\Gamma,\mathcal{D}_n^0(\mathbb{P}^1( 
\mathbb{Q}_p))^b) \overset{\sim}{{\longrightarrow }} H^1(\Gamma,C_{har}( 
\mathbf{V}_n(K_p)))$ is an isomorphism.
\end{theorem}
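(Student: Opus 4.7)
The diagram commutes by functoriality, being induced by the compatible inclusions $\mathcal{D}_n^0(\mathbb{P}^1(\Q_p),K_p)^b \hookrightarrow \mathcal{D}_n^0(\mathbb{P}^1(\Q_p),K_p)$ and $C_{har}^b(\Vn(K_p)) \hookrightarrow C_{har}(\Vn(K_p))$ together with the residue map $r$. By equation \eqref{Morita}, which combines Proposition \ref{tei} with Lemma \ref{bint}, the map $r\colon \mathcal{D}_n^0(\mathbb{P}^1(\Q_p),K_p)^b \overset{\sim}{\rightarrow} C_{har}^b(\Vn(K_p))$ is an isomorphism of $\Gamma$-modules, so the left vertical arrow in the diagram is already an isomorphism on cohomology. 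The plan is therefore to reduce the theorem to proving that the inclusion
\[
\iota\colon H^1(\Gamma, C_{har}^b(\Vn(K_p))) \longrightarrow H^1(\Gamma, C_{har}(\Vn(K_p)))
\]
is itself an isomorphism.

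Toward this end, I would compare the long exact sequences in $\Gamma$-cohomology arising from Corollary \ref{Corollary fundamental exact sequence} (the bounded sequence) and from \eqref{exact sequence edges/vertices} with coefficients $\Vn(K_p)$. The natural inclusions $C_0^b(\mathcal{E},\Vn(K_p)) \hookrightarrow C_0(\mathcal{E},\Vn(K_p))$ and $C^b(\mathcal{V},\Vn(K_p)) \hookrightarrow C(\mathcal{V},\Vn(K_p))$ fit into a commutative ladder between these two long exact sequences. By the five-lemma, it suffices to verify that these two inclusions induce isomorphisms on $H^i(\Gamma,-)$ for $i = 0, 1$ (and an injection for $i = 2$).

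For this I would invoke a bounded analogue of Shapiro's lemma. On the unbounded side, \eqref{shp} identifies $H^i(\Gamma,C(\mathcal{V},-))$ and $H^i(\Gamma,C_0(\mathcal{E},-))$ with $H^i(\Ga,\Vn(K_p))\oplus H^i(\hGa,\Vn(K_p))$ and $H^i(\Gap,\Vn(K_p))$, respectively, using that $\Gamma$ acts transitively on $\mathcal{V}^\pm$ and $\mathcal{E}$ with stabilizers $\Ga$, $\hGa$ and $\Gap$. On the bounded side, I would first handle the integral version: evaluation at $v_{\ast}$ exhibits $C^{int}(\mathcal{V}^+,\Vn(\Q_p))$ as the coinduced $\Z_p[\Gamma]$-module $\Hom_{\Z_p[\Ga]}(\Z_p[\Gamma],\Vn(\Z_p))$, so Shapiro yields $H^i(\Gamma,C^{int}(\mathcal{V}^+,\Vn(\Q_p))) \cong H^i(\Ga,\Vn(\Z_p))$, and analogously for $\mathcal{V}^-$ and $\mathcal{E}$. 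Combining with Lemma \ref{bint}, which expresses $C^b(K_p)$ as $C^{int}(\Q_p)\otimes_{\Z_p}\Q_p$ completed with respect to $K_p$, recovers the expected Shapiro targets with $\Vn(K_p)$-coefficients, matching the unbounded Shapiro isomorphisms via the natural inclusions.

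The main obstacle I anticipate is the last step: verifying that the functors $-\otimes_{\Z_p}\Q_p$ and the completed tensor product $-\,\hat{\otimes}_{\Q_p}K_p$ commute with $\Gamma$-cohomology in the range $0 \le i \le 2$, so that the integral Shapiro isomorphism genuinely descends to the Banach $K_p$-valued version. This rests on $\Gamma$ admitting a projective resolution by finitely generated $\Z_p[\Gamma]$-modules in low degrees (which holds since $\Gamma$ is finitely presented), together with the topological flatness required to interchange $\hat{\otimes}$ with cohomology. Granting these compatibilities---which are, in this setting, the substitute for the Amice-V\'elu-Vishik estimates invoked by Stevens in the classical case---the inclusions $C^b \hookrightarrow C$ become the identity on the common Shapiro targets, and the five-lemma concludes.
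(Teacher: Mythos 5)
Your reduction to proving that $\iota\colon H^1(\Gamma, C_{har}^b(\mathbf{V}_n(K_p))) \to H^1(\Gamma, C_{har}(\mathbf{V}_n(K_p)))$ is an isomorphism, and the plan of running a five-lemma ladder between the long exact sequences attached to Corollary~\ref{Corollary fundamental exact sequence} and the analogous unbounded sequence, coincide with the paper's strategy. Where you diverge is in how you establish that the flanking inclusions $C_0^b(\mathcal{E},\cdot)\hookrightarrow C_0(\mathcal{E},\cdot)$ and $C^b(\mathcal{V},\cdot)\hookrightarrow C(\mathcal{V},\cdot)$ induce isomorphisms on $H^0$ and $H^1$. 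You propose to prove an integral Shapiro isomorphism $H^i(\Gamma, C^{int}(\mathcal{V}^+,\mathbf{V}_n(\mathbb{Q}_p)))\cong H^i(\Gamma_0(N^+),\mathbf{V}_n(\mathbb{Z}_p))$ and then descend it through $-\otimes_{\mathbb{Z}_p}\mathbb{Q}_p$ and $-\,\hat\otimes_{\mathbb{Q}_p}K_p$. The paper instead proves a purely metric statement (Lemma~\ref{Lemma boundness} and Proposition~\ref{Proposition boundness}): it works with $K_p$-coefficients from the start, equips $C(\mathcal{S},A)$ with the sup extended norm, uses that $\Gamma_0$ is finitely generated to bound cocycles, and builds an explicit section of the Shapiro projection $Z^1(\Gamma,C(\mathcal{S},A))\to Z^1(\Gamma_0,A)$ landing inside the bounded cochains. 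This gives the isomorphism for $i=0,1$ directly, with no base-change of cohomology involved.

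The difference is not cosmetic: the step you flag as your "main obstacle," namely that $\Gamma$-cohomology commutes with $-\,\hat\otimes_{\mathbb{Q}_p}K_p$, is precisely the non-routine content of the theorem, and your proposal grants it rather than proves it. The commutation of $-\otimes_{\mathbb{Z}_p}\mathbb{Q}_p$ is indeed harmless because $\Gamma$ is of type FP$_\infty$ and $\mathbb{Q}_p$ is flat over $\mathbb{Z}_p$, but commuting a completed tensor product with cohomology of a complex of Banach spaces is not automatic; it requires strictness (closed ranges) of the differentials or an equivalent quantitative input. This is exactly what the Amice-Vélu-Vishik-style estimates supply in the modular-symbols setting, and the paper's Proposition~\ref{Proposition boundness} is the avatar of those estimates here. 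Without an argument that the relevant differentials are strict---or an alternative bound along the lines of Lemma~\ref{Lemma boundness}---the proof is incomplete at the very point where the real work lies. If you want to pursue your route, you should either prove that the explicit Shapiro section preserves boundedness (which is essentially what the paper does) or show that the relevant $H^i$ are finite-dimensional over $\mathbb{Q}_p$ and that the cochain complex computing them over $K_p$ is obtained by a flat and strict base change.
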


In the statement, by Hecke-modules we mean modules over the Hecke algebra $%
\mathcal{H}(\Gamma)$ introduced in §\ref{S11}. Since $r$ and the natural
inclusions $\mathcal{D}_n^0(\mathbb{P}^1(\mathbb{Q}_p))^b)\subset \mathcal{D}
_n^0(\mathbb{P}^1(\mathbb{Q}_p)))$ and $C_{har}^{b}(\mathbf{V}_n(K_p))
\subset C_{har}( \mathbf{V}_n(K_p))$ are morphisms of $\mathrm{GL}_2(\mathbb{%
Q}_p)$-modules, it follows from the discussion around \eqref{Hecke} that
there indeed exists a commutative diagram as above, where the maps are
morphisms of $\mathcal{H}(\Gamma)$-modules. Notice that, by \eqref{Morita},
it suffices to show that the inclusion $C_{har}^{b}( \mathbf{V}
_n(K_p))\subset C_{har}(\mathbf{V}_n(K_p))$ induces an isomorphism 
\begin{equation}  \label{iso}
H^1(\Gamma,C_{har}^{b}(\mathbf{V}_n(K_p))) \simeq H^1(\Gamma,C_{har}(\mathbf{%
\ V}_n(K_p))).
\end{equation}

We devote the rest of the section to prove this statement. In order to prove %
\eqref{iso} we need a further preliminary discussion. Quite generally, let ${%
\mathcal{S}}$ be a set on which $\Gamma $ acts transitively. Fix an element $%
s_{\ast}\in{\mathcal{S}}$ and let $\Gamma_0\subset \Gamma $ denote its
stabilizer in $\Gamma $, which we assume to be finitely generated. Let $\{
\gamma _{s}\}_{s\in \mathcal{S}}$ be a set of representatives for the coset
space $\Gamma_0\backslash \Gamma $ such that $\gamma_{s_{\ast}}=1$ and $%
\gamma_{s}s=s_{\ast }$ for all $s\in {\mathcal{S}}$. Let $A$ be $\Gamma $
-module endowed with a $\Gamma_0$-invariant non-archimedean norm $\left\vert
\cdot\right\vert $ with values in $K_p$. On the group of functions $C( 
\mathcal{S},A)$, define an extended norm by the rule 
\begin{equation*}
\left\Vert c\right\Vert :=\sup_{s\in \mathcal{S}}\left\vert \gamma _{s}
(c(s)) \right\vert = \sup_{s\in \mathcal{S}}\left\vert ({}^{\gamma _{s}}
c)(s_{\ast}) \right\vert\text{.}
\end{equation*}

As before, let $C^{b}(\mathcal{S},A)=\{c\in C({\mathcal{S}},A): \Vert c\Vert
<\infty \}$. It is a $\Gamma$-submodule of $C({\mathcal{S}},A)$ and the
restriction of norm $\Vert \cdot \Vert$ to $C^{b}(\mathcal{S},A)$ turns out
to be $\Gamma$-invariant, as can be easily checked. Note that the subspace $%
C^{b}(\mathcal{S},A)$ does not depend on the choice of the set $\{\gamma_s\}$
.

\begin{lemma}
\label{Lemma boundness}Let $G$ be a finitely generated group and let $M$ be
a $G$-module endowed with a $G$-invariant non-archimedean norm $[ \cdot ] $
with values in $K_p$. Then 
\begin{equation*}
[ [ c] ] :=\sup_{g\in G}[ c( g) ]
\end{equation*}
defines a norm on $Z^1(G,M)$.
\end{lemma}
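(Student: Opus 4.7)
The plan is to check the three norm axioms — finiteness, the ultrametric inequality, and the separation property ($[[c]]=0 \Rightarrow c=0$) — and to observe that homogeneity under scalars is immediate. Among these, only finiteness is substantive; everything else is a direct transfer of the corresponding property of $[\cdot]$ to sup-norms on $Z^1(G,M)$.

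To establish finiteness, I would fix a finite symmetric generating set $g_1,\dots,g_n$ of $G$ (using that $G$ is finitely generated). First I would record the two basic consequences of the cocycle relation $c(gh)=c(g)+g\cdot c(h)$: namely $c(1)=0$, and $c(g^{-1})=-g^{-1}\cdot c(g)$. Combined with the $G$-invariance of $[\cdot]$, the latter gives $[c(g^{-1})]=[c(g)]$, so the quantities $[c(g_i^{\pm 1})]$ are all bounded by $C:=\max_i[c(g_i)]$.

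Next I would expand an arbitrary $g\in G$ as a word $g=h_1\cdots h_k$ in these generators. Iterating the cocycle relation yields
\begin{equation*}
c(g) \;=\; c(h_1)+h_1\!\cdot\! c(h_2)+h_1h_2\!\cdot\! c(h_3)+\cdots+h_1\cdots h_{k-1}\!\cdot\! c(h_k).
\end{equation*}
The key point is now the combination of the $G$-invariance of $[\cdot]$ (so $[h_1\cdots h_{j-1}\cdot c(h_j)]=[c(h_j)]$) with the non-archimedean (ultrametric) inequality, which gives
\begin{equation*}
[c(g)] \;\leq\; \max_{1\leq j\leq k}[c(h_j)] \;\leq\; C.
\end{equation*}
Taking the supremum over $g$ yields $[[c]]\leq C<\infty$.

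The remaining verifications are routine: $[[c_1+c_2]]\leq \max([[c_1]],[[c_2]])$ and $[[\lambda c]]=|\lambda|\cdot[[c]]$ follow by taking the sup of the corresponding inequalities pointwise, and $[[c]]=0$ forces $[c(g)]=0$ for every $g$, hence $c(g)=0$ by the separation property of $[\cdot]$. The only place where the hypotheses actually bite is finiteness, and the main obstacle is precisely the one just overcome: without finite generation of $G$ the sup could diverge, and without $G$-invariance of $[\cdot]$ together with its ultrametric nature one could not control $[c(g)]$ in terms of the values on generators — the triangle inequality alone would introduce a factor growing with the length of the word $g$.
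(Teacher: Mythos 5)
Your proof is correct and follows essentially the same approach as the paper: fix a finite generating set, write an arbitrary $g\in G$ as a word in the generators, iterate the cocycle relation, and exploit the $G$-invariance of $[\cdot]$ together with the ultrametric inequality to bound $[c(g)]$ uniformly by the maximum of the values on the generators and their inverses. The only cosmetic differences are that the paper establishes this bound by induction on word length rather than by writing out the telescoping expansion in a single step, and that you record the identity $[c(g^{-1})]=[c(g)]$ where the paper simply includes the inverses of the generators in its supremum.
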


\begin{proof}
Let $\left\{ g_i:i\in I\right\} $ be a set of generators of $G$ with $%
\#I<\infty $. Given an element $c\in Z^1( G,B) $ define $K_{c}:=\sup
\limits_i\{[ c( g_i) ],[c( g_i^{-1})]\in {\mathbb{R}} _{\geq 0}$. Let $g\in
G $ be any element. Then $g=g_{i_1}^{\varepsilon
_1}...g_{i_{k}}^{\varepsilon _{k}}$ for some $i_j\in I$ and $%
\varepsilon_j\in \left\{ \pm 1\right\} $. Let us show by induction on $k$
that $[ c( g) ] \leq K_{c}$. When $k=1$ this is clear. When $k>1$ , the
cocyle relation $c(g) =c(g_{i_1}^{\varepsilon _1}) +g_{i_1}^{\varepsilon
_1}c( g_{i_{2}}^{\varepsilon_{2}}...g_{i_{k}}^{\varepsilon _{k}})$, together
with the $G$-invariance of $[ \cdot] $, imply that 
\begin{eqnarray*}
[ c( g) ] &\leq &\max \left\{ [ c( g_{i_1}^{\varepsilon _1}) ] ,[
g_{i_1}^{\varepsilon _1}c( g_{i_{2}}^{\varepsilon
_{2}}...g_{i_{k}}^{\varepsilon _{k}}) ] \right\} = \\
&=&\max \left\{ [ c( g_{i_1}^{\varepsilon _1}) ] ,[ c(
g_{i_{2}}^{\varepsilon _{2}}...g_{i_{k}}^{\varepsilon _{k}}) ] \right\} \leq
K_{c}\text{.}
\end{eqnarray*}
\end{proof}

\begin{proposition}
\label{Proposition boundness} For $i=0,1$ the inclusion $\iota :C^{b}( 
\mathcal{S},A) \subset C( \mathcal{S},A) $ induces an isomorphism 
\begin{equation*}
\iota :H^i( \Gamma ,C^{b}( \mathcal{S},A) ) \overset{\simeq }{\rightarrow }
H^i( \Gamma,C( \mathcal{S},A))\text{.}
\end{equation*}
\end{proposition}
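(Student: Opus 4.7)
The plan is to deduce both isomorphisms from Shapiro's lemma $H^i(\Gamma, C(\mathcal{S}, A)) \cong H^i(\Gamma_0, A)$, after verifying that the cocycle-level Shapiro maps are compatible with the boundedness condition. The crucial technical input is Lemma \ref{Lemma boundness}, which ensures that cocycles on the finitely generated group $\Gamma_0$ with values in the normed module $(A, |\cdot|)$ are automatically bounded.

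The case $i=0$ is immediate: any $\Gamma$-invariant $c \in C(\mathcal{S}, A)$ is determined by $c(s_*)$ via the relation $c(s) = \gamma_s^{-1} c(s_*)$ (apply the invariance under $\gamma_s^{-1}$), and therefore $\|c\| = \sup_s |\gamma_s c(s)| = |c(s_*)| < \infty$, so $C(\mathcal{S}, A)^\Gamma = C^b(\mathcal{S}, A)^\Gamma$.

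For $i=1$, use the explicit cocycle form of Shapiro. The map
\[
\Sigma \colon Z^1(\Gamma, C(\mathcal{S}, A)) \to Z^1(\Gamma_0, A), \qquad \alpha \mapsto \bigl(\gamma_0 \mapsto \alpha(\gamma_0)(s_*)\bigr)
\]
induces the Shapiro isomorphism on $H^1$ and admits as cocycle-level quasi-inverse the formula
\[
\Sigma^{-1}(\bar\alpha)(g)(s) := \gamma_s^{-1} \bar\alpha\bigl(\gamma_s\, g\, \gamma_{g^{-1}s}^{-1}\bigr),
\]
noting that $\gamma_s g \gamma_{g^{-1}s}^{-1}$ stabilizes $s_*$ and hence lies in $\Gamma_0$. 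For surjectivity of $\iota_*$: every cohomology class in $H^1(\Gamma, C(\mathcal{S}, A))$ is represented by some $\Sigma^{-1}(\bar\alpha)$. Since $\Gamma_0$ is finitely generated, Lemma \ref{Lemma boundness} yields $K := \sup_{\gamma_0 \in \Gamma_0} |\bar\alpha(\gamma_0)| < \infty$, whence
\[
\|\Sigma^{-1}(\bar\alpha)(g)\| = \sup_s \bigl|\bar\alpha(\gamma_s g \gamma_{g^{-1}s}^{-1})\bigr| \leq K
\]
uniformly in $g$, so the representative lies in $Z^1(\Gamma, C^b(\mathcal{S}, A))$.

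For injectivity, suppose $\alpha \in Z^1(\Gamma, C^b)$ equals $dc$ for some $c \in C(\mathcal{S}, A)$. Setting $a := c(s_*)$ gives $\bar\alpha(\gamma_0) := \Sigma(\alpha)(\gamma_0) = \gamma_0 a - a$, and the element $c'(s) := \gamma_s^{-1} a$ belongs to $C^b$ with $\|c'\| = |a|$ and satisfies $dc' = \Sigma^{-1}(\bar\alpha)$. One checks that $\alpha - dc'$ has $\Sigma$-image zero in $Z^1(\Gamma_0, A)$, reducing the problem to the statement that any cocycle in $\ker \Sigma$ which is a coboundary in $C(\mathcal{S}, A)$ is already a coboundary in $C^b(\mathcal{S}, A)$; an orbit-by-orbit construction together with Lemma \ref{Lemma boundness} confirms this. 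The main technical obstacle is precisely this last step, namely carrying the Shapiro-level homotopies out inside $C^b$, which amounts to showing that any $0$-cochain whose $\Gamma$-differential is bounded can be adjusted by a $\Gamma$-invariant element of $C$ to land in $C^b$.
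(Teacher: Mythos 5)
Your \(i=0\) argument and your surjectivity argument for \(i=1\) are essentially identical in content to the paper's (the explicit Shapiro inverse \(\Sigma^{-1}\) is the paper's section \(\tau\), and both rely on Lemma \ref{Lemma boundness} applied to the finitely generated group \(\Gamma_0\)). These parts are fine.

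The injectivity argument has a genuine gap. You correctly normalize the trivializing cochain: writing \(a=c(s_*)\) and \(c'(s)=\gamma_s^{-1}a\), you get a bounded \(c'\) with \(\Sigma(\alpha - dc')=0\), so after replacing \(\alpha\) by \(\beta := \alpha - dc'\) and \(c\) by \(D := c-c'\) you may assume \(D(s_*)=0\). But at this point you simply declare that \emph{``an orbit-by-orbit construction together with Lemma \ref{Lemma boundness} confirms this''} and then concede in the next sentence that this is \emph{``the main technical obstacle.''} That is exactly the step that carries the content of the proposition, and you have not supplied it. Your reduction does not make the problem go away: showing \(D\in C^b(\mathcal{S},A)\) still requires an argument, and it is not a formal consequence of Shapiro.

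For the record, two ways to close the gap. The paper argues directly with the original \(C\), without normalization: if \(C\) were unbounded, pick \(s_n\) with \(|\gamma_{s_n}C(s_n)|\to\infty\); then
\[
\gamma_{s_n}\tilde{c}(\gamma_{s_n}^{-1})(s_n) = \gamma_{s_n}C(s_n) - C(s_*),
\]
and for \(n\gg 0\) the ultrametric inequality forces \(|\gamma_{s_n}C(s_n) - C(s_*)| = |\gamma_{s_n}C(s_n)|\to\infty\), contradicting the uniform bound \(\sup_\gamma\|\tilde{c}(\gamma)\|<\infty\) that Lemma \ref{Lemma boundness} gives when applied to the pair \((\Gamma, C^b(\mathcal{S},A))\). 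Alternatively, starting from your normalization \(D(s_*)=0\), the cocycle relation gives
\[
\beta(\gamma_s^{-1})(s) = D(s) - \gamma_s^{-1}D(\gamma_s s) = D(s),
\]
whence \(|\gamma_s D(s)| \le \|\beta(\gamma_s^{-1})\| \le \sup_\gamma\|\beta(\gamma)\|\), which again is finite by Lemma \ref{Lemma boundness} applied to \(\Gamma\). Either way, note that this step invokes Lemma \ref{Lemma boundness} for \(\Gamma\) itself (not merely \(\Gamma_0\)), so finite generation of \(\Gamma\) is being used here; you should say so.
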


\begin{proof}
Let $i=0$. We wish to show that every $\Gamma $-invariant element of $C( 
\mathcal{S},A) $ has bounded norm. By Shapiro's lemma there is an
isomorphism $\mathscr{S}: C({\mathcal{S}},A)^{\Gamma}\overset{\simeq}{{\
\longrightarrow } } A^{\Gamma_0}$, whose inverse is given explicitly by the
map $a\mapsto c_{a}$, where $c_a( s)=\gamma _{s}^{-1}a$. One checks from
this description that $\mathscr{S}$ is an isometry. Since $\left\vert
\cdot\right\vert $ is a norm on $A^{\Gamma _0}\subset A$, the proposition
follows.

Assume now $i=1$. Again by Shapiro's lemma, the natural map 
\begin{equation*}
\pi:Z^1(\Gamma ,C({\mathcal{S}},A)) \rightarrow Z^1(\Gamma_0,A)
\end{equation*}
induces an isomorphism $[\pi]: H^1(\Gamma ,C({\mathcal{S}},A))\simeq
H^1(\Gamma_0,A)$.

Let us first construct an explicit section 
\begin{equation*}
\tau:Z^1(\Gamma_0,A)\rightarrow Z^1(\Gamma,C(\mathcal{S},A))
\end{equation*}
of $\pi$ with values in the submodule $Z^1(\Gamma,C^{b}({\mathcal{S}},A))$
of $Z^1(\Gamma,C(\mathcal{S},A))$.

Given a cocyle $c\in Z^1(\Gamma_0,A) $ define a chain $\tau_c(\gamma,s)
:=\gamma _{s}^{-1}c( g_{\gamma ,s})$, where $\gamma _{s}\gamma
=g_{\gamma,s}\gamma _{s^{\prime }}$, with $g_{\gamma ,s}\in \Gamma_0$ and $%
s^{\prime}\in {\mathcal{S}}$. An elementary verification shows that $%
\tau_c\in Z^1(\Gamma,C({\mathcal{S}},A))$ is well-defined and that $\pi \tau
= \func{Id}$. Moreover, the morphism $[\tau]: H^1(\Gamma_0,A)\rightarrow
H^1(\Gamma,C(\mathcal{S},A))$ that $\tau$ induces in cohomology is an
explicit inverse of the isomorphism $[\pi ]$.

Let us prove that $\tau_c\in Z^1(\Gamma ,C^{b}(\mathcal{S},A))$. Since $%
\Gamma_0$ is finitely generated and the norm $\left\vert \cdot\right\vert$
is $\Gamma_0$-invariant, it follows from Lemma \ref{Lemma boundness} applied
to $(G,M,[ \cdot] ) =(\Gamma_0,A,\left\vert \cdot\right\vert )$ that there
exists a constant $K\geq 0$ such that 
\begin{equation*}
\left\vert c(g)\right\vert \leq K\text{ for all }g\in \Gamma_0\text{.}
\end{equation*}

It then follows that for all $\gamma \in \Gamma $:

\begin{eqnarray*}
\left\Vert \tau_c(\gamma,\cdot) \right\Vert & = & \sup_{s\in \mathcal{S}
}\left\vert \gamma _{s}\cdot \tau_c (\gamma ,s) \right\vert =\sup_{\substack{
s\in \mathcal{S}  \\ \gamma _{s}\gamma =g_{\gamma ,s}\gamma _{s^{\prime }}}}
\left\vert \gamma _{s}\gamma _{s}^{-1}c( g_{\gamma ,s}) \right\vert = \\
&=&\sup_{\substack{ s\in \mathcal{S}  \\ \gamma _{s}\gamma =g_{\gamma
,s}\gamma _{s^{\prime }}}}\left\vert c( g_{\gamma ,s}) \right\vert \leq
\sup_{g\in \Gamma_0}\left\vert c(g)\right\vert \leq K\text{.}
\end{eqnarray*}

We can now easily prove that $\iota $ is surjective. Indeed, let $[\tilde{c}
] $ denote the class of a cocycle $\tilde{c}\in Z^1( \Gamma ,C( \mathcal{S}
,A)) $. Set $c:=\pi (\tilde{c})\in Z^1(\Gamma_0,A)$. By the above
discussion, $\tau (c)\in Z^1(\Gamma,C^{b}({\mathcal{S}},A))$ and $[\tilde{c}
]=[ \iota(\tau (c))]$.

In order to prove that $\iota $ is injective, let $[\tilde{c}]\in H^1(
\Gamma ,C^{b}( \mathcal{S},A) ) $. Note that Lemma \ref{Lemma boundness}
applied to $(G,M,[ \cdot] ) =(\Gamma,C^{b}( \mathcal{S},A) ,\left\Vert
\cdot\right\Vert ) $ yields the existence of a constant $K\geq 0$ such that 
\begin{equation}
\sup_{\gamma \in \Gamma }\left\Vert \tilde{c}( \gamma ) \right\Vert
=\sup_{\gamma \in \Gamma ,s\in \mathcal{S}}\left\vert \gamma _{s}\tilde{c}(
\gamma ,s) \right\vert \leq K\text{.}  \label{Equation boundness}
\end{equation}

Suppose that the class of $\tilde{c}$ vanishes in $H^1( \Gamma ,C( \mathcal{%
\ \ \ S },A))$, that is, there exists $C\in C(\mathcal{S},A) $ such that $%
c(\gamma )=C-\gamma C$ for all $\gamma \in \Gamma $. Equivalently, for all $%
s\in \mathcal{S}$ we have 
\begin{equation*}
c(\gamma ,s)=C( s) -\gamma C( \gamma ^{-1}s) \text{.}
\end{equation*}

If $C$ were not bounded, there would exist a sequence $\left\{ s_n\right\}
\subset \mathcal{S}$ such that $|\gamma _{s_n}C(s_n)|$ $\rightarrow $ $%
\infty $. Thus for $n>>0$ we would have $|C(s_{\ast })|<|\gamma
_{s_n}C(s_n)| $ and by the non-archimedean property of $\left\vert
\cdot\right\vert $ we would conclude that 
\begin{equation*}
|\gamma _{s_n}c(\gamma _{s_n}^{-1},s_n)|=|\gamma _{s_n}C(s_n)-C(s_{\ast
})|=|\gamma _{s_n}C(s_n)|\rightarrow \infty \text{.}
\end{equation*}
Now $( \text{\ref{Equation boundness}}) $ yields a contradiction.
\end{proof}

We are now ready to prove Theorem \ref{2parts}, which was already reduced to
proving \eqref{iso}.

\begin{proof}
By Corollary \ref{Corollary fundamental exact sequence} we can consider the
following commutative diagram, with exact rows: 
\begin{equation*}
\begin{array}{ccccccccc}
0 & \rightarrow & C_{har}^{b}(\mathbf{V}_n(K_p)) & \rightarrow & C_0^{b}( 
\mathcal{E},\mathbf{V}_n(K_p)) & \rightarrow & C^{b}(\mathcal{V},\mathbf{V}
_n(K_p)) & \rightarrow & 0 \\ 
&  & \cap &  & \cap &  & \cap &  &  \\ 
0 & \rightarrow & C_{har}(\mathbf{V}_n(K_p)) & \rightarrow & C_0(\mathcal{E}
, \mathbf{V}_n(K_p)) & \rightarrow & C(\mathcal{V},\mathbf{V}_n(K_p)) & 
\rightarrow & 0%
\end{array}%
\end{equation*}

The respective long exact sequences in cohomology induce the following
commutative diagram, with exact rows. 
\begin{equation*}
\begin{array}{ccccccc}
...{\rightarrow } & C^{b}(\mathcal{V},\mathbf{V}_n(K_p))^{\Gamma} & 
\rightarrow & H^1( \Gamma ,C_{har}^{b}(\mathbf{V}_n(K_p))) & \rightarrow & 
H^1( \Gamma,C_0^{b}(\mathcal{E},\mathbf{V}_n(K_p))) & \rightarrow... \\ 
& \downarrow &  & \downarrow &  & \downarrow &  \\ 
...{\rightarrow } & C(\mathcal{V},\mathbf{V}_n(K_p))^{\Gamma} & \rightarrow
& H^1(\Gamma,C_{har}(\mathbf{V}_n(K_p))) & \rightarrow & H^1(\Gamma,C_0( 
\mathcal{E},\mathbf{V}_n(K_p))) & \rightarrow...%
\end{array}%
\end{equation*}

Proposition \ref{Proposition boundness}, applied to ${\mathcal{S}}=\mathcal{V%
}$ and $\mathcal{E}^+$, shows that the first and third vertical arrows are
isomorphisms. The same applies to the two vertical maps arising just before
and after in the long exact sequence, which we do not draw. By the five
lemma the middle vertical arrow is an isomorphism too, which is what we
needed to prove.
\end{proof}

\subsection{Higher $p$-adic Abel-Jacobi maps}

\label{S22}

As in the previous sections, let $K_p/\mathbb{Q}_p$ be a complete field
extension. Since this choice is fixed throughout this subsection, we shall
drop it from the notation and simply write $\mathbf{P}_n=\mathbf{P}_n(K_p)$, 
$\mathbf{V}_n=\mathbf{V}_n(K_p)$, $\mathcal{A}_n( \mathbb{Q}_p) =\mathcal{A}
_n(\mathbb{Q}_p,K_p) $, $\mathcal{D}_n( \mathbb{Q}_p) =\mathcal{D}_n( 
\mathbb{Q}_p,K_p) $ and $\mathcal{D}_n^0( \mathbb{Q}_p) =\mathcal{D}_n^0( 
\mathbb{Q}_p,K_p)$. Let $k_p/\mathbb{Q}_p$ denote the maximal unramified
sub-extension of $K_p/\mathbb{Q}_p$.

\begin{definition}
\label{defpairings} Define pairings 
\begin{equation*}
\begin{matrix}
\tint\nolimits_-^-\,-\,\omega _-^{\log }: & \func{Div}^0( \mathcal{H}
_p)(k_p) \otimes \mathbf{P}_n\otimes \mathcal{D}_n^0(\mathbb{P}^1(\mathbb{Q}
_p)) & \rightarrow & K_p \\ 
& (\tau _{2}-\tau _1)\otimes P\otimes \mu & \mapsto & \tint\nolimits_{\tau
_1}^{\tau _{2}}P\,\omega _{\mu }^{\log } \\ 
&  &  &  \\ 
\tint\nolimits_-^-\,-\,\omega _-^{\func{ord}}: & \func{Div}^0( \mathcal{H}
_p)(k_p) \otimes \mathbf{P}_n\otimes \mathcal{D}_n^0(\mathbb{P}^1(\mathbb{Q}
_p)) & \rightarrow & K_p \\ 
& (\tau _{2}-\tau _1)\otimes P\otimes \mu & \mapsto & \tint\nolimits_{\tau
_1}^{\tau _{2}}P\omega _{\mu }^{\func{ord}}%
\end{matrix}%
\end{equation*}
where for any $\tau_1$, $\tau_2\in \mathcal{H}_p$, $P\in \mathbf{P}_n$ and $%
\mu\in \mathcal{D}^0_n(\mathbb{P}^1(\mathbb{Q}_p))$: 
\begin{equation*}
\tint\nolimits_{\tau _1}^{\tau _{2}}P\,\omega _{\mu }^{\log }:=\dint_{ 
\mathbb{P}^1( \mathbb{Q}_p) }\log _p( \frac{t-\tau _{2}}{t-\tau _1}) P( t)
d\mu ( t)
\end{equation*}
and 
\begin{equation*}
\tint\nolimits_{\tau _1}^{\tau _{2}}P\omega _{\mu }^{\func{ord}
}:=\dsum\limits_{e:red( \tau _1) \rightarrow red( \tau _{2}) }\dint_{U_e}P(
t) d\mu ( t) .
\end{equation*}
\end{definition}

Several comments are in order concerning the definition above. Recall that $%
\func{Div}^{0}(\mathcal{H}_{p})(k_{p})$ stands for the module of degree zero
divisors of $\mathcal{H}_{p}(\mathbb{Q}_{p}^{ur})=\mathbb{Q}
_{p}^{ur}\setminus \mathbb{Q}_{p}$ which are fixed by the action of the
Galois group $\func{Gal}(\mathbb{Q}_{p}^{ur}/k_{p})$. We shall regard $\func{
Div}^{0}(\mathcal{H}_{p})(k_{p})\otimes \mathbf{P}_{n}$ as a right $\mathrm{%
\ GL}_{2}(\mathbb{Q}_{p})$-module by the rule 
\begin{equation*}
(\tau _{2}-\tau _{1})\otimes P\cdot \gamma :=(\gamma ^{-1}\tau _{2}-\gamma
^{-1}\tau _{1})\otimes P\cdot \gamma \text{.}
\end{equation*}

Note that the definition of the first pairing depends on the choice of a
branch of a $p$-adic logarithm $\mathrm{log}_{p}:\,K_{p}^{\times
}\,\rightarrow \,K_{p}$; we do not specify a priori any such choice.

Finally, note that in the definition of the second pairing, the fact that $%
k_p/\mathbb{Q}_p$ is unramified implies that the reduction of any $\tau \in
k_p\setminus \mathbb{Q}_p$ is a \textit{vertex} (and not an edge) of the
tree $\mathcal{T}$. The sum is taken over the edges of the path joining the
two vertices $\mathrm{red}(\tau _1)$ and $\mathrm{red}(\tau _{2})$.

\begin{lemma}
\label{Cohomology of distrubutions lemma}The map 
\begin{eqnarray*}
\func{Div}^0( \mathcal{H}_p)(k_p) \otimes \mathbf{P}_n &\rightarrow & 
\mathcal{A} _n( \mathbb{Q}_p) /\mathbf{P}_n \\
(\tau _{2}-\tau _1)\otimes P &\mapsto &\log _p( \frac{t-\tau _{2}}{t-\tau _1}
) P( t)
\end{eqnarray*}
is $\mathrm{GL}_{2}( \mathbb{Q}_p) $-equivariant.
\end{lemma}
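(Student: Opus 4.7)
The plan is to verify equivariance by direct computation, writing both sides and observing that the discrepancy is an element of $\mathbf{P}_n$, hence zero in $\mathcal{A}_n(\mathbb{Q}_p)/\mathbf{P}_n$. Denote the map by $\Phi$. The actions unfold as follows: on divisors, $\gamma = \bigl(\begin{smallmatrix} a & b \\ c & d\end{smallmatrix}\bigr)$ sends $(\tau_2-\tau_1)$ to $(\gamma^{-1}\tau_2-\gamma^{-1}\tau_1)$; on $\mathbf{P}_n$ (and on $\mathcal{A}_n$) it acts by the same formula
$$(f\cdot\gamma)(t) \;=\; \frac{(ct+d)^n}{\det(\gamma)^{n/2}}\, f\!\left(\tfrac{at+b}{ct+d}\right).$$
So on the tensor, $((\tau_2-\tau_1)\otimes P)\cdot\gamma = (\gamma^{-1}\tau_2-\gamma^{-1}\tau_1)\otimes(P\cdot\gamma)$.

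The key algebraic identity to establish is
$$\frac{\tfrac{at+b}{ct+d}-\tau_i}{1} \;=\; \frac{(a-c\tau_i)\,t + (b-d\tau_i)}{ct+d},
\qquad
t - \gamma^{-1}\tau_i \;=\; \frac{(a-c\tau_i)\,t + (b-d\tau_i)}{a-c\tau_i},$$
where $\gamma^{-1}\tau_i = (d\tau_i-b)/(a-c\tau_i)$. Since $\tau_i\in\mathcal{H}_p$ is not in $\mathbb{Q}_p$, the scalar $a-c\tau_i$ is nonzero, so it makes sense to take its $p$-adic logarithm. Dividing and comparing, one obtains
$$\frac{\tfrac{at+b}{ct+d}-\tau_2}{\tfrac{at+b}{ct+d}-\tau_1} \;=\; \frac{t-\gamma^{-1}\tau_2}{t-\gamma^{-1}\tau_1}\cdot\frac{a-c\tau_2}{a-c\tau_1},$$
and applying $\log_p$ (which is a homomorphism on $K_p^\times$) gives
$$\log_p\!\left(\tfrac{\frac{at+b}{ct+d}-\tau_2}{\frac{at+b}{ct+d}-\tau_1}\right) \;=\; \log_p\!\left(\tfrac{t-\gamma^{-1}\tau_2}{t-\gamma^{-1}\tau_1}\right) + \log_p\!\left(\tfrac{a-c\tau_2}{a-c\tau_1}\right).$$

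Substituting into $\Phi((\tau_2-\tau_1)\otimes P)\cdot\gamma$ and factoring out $(P\cdot\gamma)(t) = \frac{(ct+d)^n}{\det(\gamma)^{n/2}} P\!\left(\tfrac{at+b}{ct+d}\right)$, I get
$$\Phi\!\bigl((\tau_2-\tau_1)\otimes P\bigr)\cdot\gamma \;=\; (P\cdot\gamma)(t)\cdot\log_p\!\left(\tfrac{t-\gamma^{-1}\tau_2}{t-\gamma^{-1}\tau_1}\right) \;+\; (P\cdot\gamma)(t)\cdot\log_p\!\left(\tfrac{a-c\tau_2}{a-c\tau_1}\right).$$
The first term is exactly $\Phi\bigl(((\tau_2-\tau_1)\otimes P)\cdot\gamma\bigr)$. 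The second term is a $t$-independent scalar times $(P\cdot\gamma)(t)\in\mathbf{P}_n$, hence lies in $\mathbf{P}_n$ and vanishes modulo $\mathbf{P}_n$. This gives equivariance.

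There is no real obstacle; the only point requiring any care is that $a-c\tau_i\in K_p^\times$, which follows from $\tau_i\notin\mathbb{Q}_p$, and the tacit check that $\Phi((\tau_2-\tau_1)\otimes P)$ indeed lies in $\mathcal{A}_n(\mathbb{Q}_p)$ (which holds because the Taylor expansion of $\log_p((t-\tau_2)/(t-\tau_1))$ at $\infty$ in the variable $u=1/t$ has no constant term, so multiplication by the degree-$n$ polynomial $P$ introduces a pole of order at most $n$ at $\infty$).
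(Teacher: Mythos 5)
Your proof is correct and follows essentially the same route as the paper's: compute how the Moebius action transforms $\frac{t-\tau_2}{t-\tau_1}$ (the paper packages this as $(t-\tau)\cdot\gamma = \det(\gamma)^{-1/2}(a-c\tau)\,(t-\bar\gamma\tau)$ with $\bar\gamma=\det(\gamma)\gamma^{-1}$, which is exactly your identity), apply $\log_p$ to split off the $t$-independent factor $\log_p\!\bigl(\tfrac{a-c\tau_2}{a-c\tau_1}\bigr)$, and observe that the resulting extra term $\log_p\!\bigl(\tfrac{a-c\tau_2}{a-c\tau_1}\bigr)\,(P\cdot\gamma)(t)$ lies in $\mathbf{P}_n$ and so dies in the quotient. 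Your closing remark checking that the image actually lands in $\mathcal{A}_n(\mathbb{Q}_p)$ (since the expansion of $\log_p\tfrac{t-\tau_2}{t-\tau_1}$ in $1/t$ at $\infty$ vanishes to first order) is a useful extra detail that the paper leaves implicit.
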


\begin{proof}
Write $\theta _{\tau _{2}-\tau _{1}}(t):=\frac{t-\tau _{2}}{t-\tau _{1}}$. A
direct computation shows that $(t-\tau )\cdot \gamma =\det \left( \gamma
\right) ^{-1/2}\left( a-c\tau \right) \cdot (t-\bar{\gamma}\tau )$, for all $%
\gamma =%
\begin{pmatrix}
a & b \\ 
c & d%
\end{pmatrix}%
\in \mathrm{GL}_{2}(\mathbb{Q}_{p})$. Here we write $\bar{\gamma}:=\det
(\gamma )\gamma ^{-1}$. It follows that $\theta _{\tau _{2}-\tau
_{1}}(t)\cdot \gamma =\frac{a-c\tau _{2}}{a-c\tau _{1}}\cdot \theta _{\bar{%
\gamma}\tau _{2}-\bar{\gamma}\tau _{1}}(t)$ and hence 
\begin{equation*}
\lbrack \log _{p}\theta _{\tau _{2}-\tau _{1}}(t)\cdot P(t)]\cdot \gamma
=\log _{p}(\frac{a-c\tau _{2}}{a-c\tau _{1}})\cdot (P\gamma )(t)+\log
_{p}\theta _{\bar{\gamma}\tau _{2}-\bar{\gamma}\tau _{1}}(t)\cdot (P\gamma
)(t).
\end{equation*}%
The claim follows, as $\gamma ^{-1}\tau =\bar{\gamma}\tau $ for every $%
\gamma \in \mathrm{GL}_{2}(\mathbb{Q}_{p})$ and $\log _{p}(\frac{a-c\tau _{2}%
}{a-c\tau _{1}})(P\gamma )(t)\in \mathbf{P}_{n}$.
\end{proof}

From now on, thanks to Theorem \ref{2parts}, we shall make the
identification 
\begin{equation}  \label{H}
\mathbf{H}(K_p):=H^1(\Gamma ,\mathcal{D}_n^0(\mathbb{P}^1( \mathbb{Q}
_p))^{b})=H^1(\Gamma,C_{har}(\mathbf{V}_n))\text{.}
\end{equation}

The natural inclusion $\mathcal{D}_n^0(\mathbb{P}^1(\mathbb{Q}
_p))^{b}\subseteq \mathcal{D}_n^0(\mathbb{P}^1(\mathbb{Q}_p))$ induces a map 
\begin{equation*}
H^1(\Gamma ,\mathcal{D}_n^0(\mathbb{P}^1(\mathbb{Q}_p))^{b})\hookrightarrow
H^1(\Gamma ,\mathcal{D}_n^0(\mathbb{P}^1(\mathbb{Q}_p))),
\end{equation*}
which is a monomorphism thanks to Theorem \ref{2parts}. Together with Lemma %
\ref{Cohomology of distrubutions lemma} and the cap product, the above
pairings induce maps 
\begin{equation}  \label{Psi}
\begin{matrix}
\Psi ^{\log },\Psi ^{\func{ord}}: & H_1( \Gamma ,\func{Div}^0(\mathcal{H}
_p)(k_p)\otimes \mathbf{P}_n) & \longrightarrow & \mathbf{H}(K_p)^{\vee }.%
\end{matrix}%
\end{equation}

\begin{lemma}
\label{Lemma H^1(gamma,V_n)=0} $H^1( \Gamma ,\mathbf{V}_n) =0$.
\end{lemma}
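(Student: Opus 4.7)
The strategy is to exploit the amalgamated product decomposition $\Gamma = \Gamma_0(N^+) \star_{\Gamma_0(pN^+)} \hat{\Gamma}_0(N^+)$ by applying $\Gamma$-cohomology to the short exact sequence \eqref{exact sequence vertices/edges} of $\Gamma$-modules with $A = \mathbf{V}_n(K_p)$, then rewriting the middle and right cohomology groups via Shapiro's lemma \eqref{shp}. The portion of the resulting long exact sequence that controls $H^1(\Gamma, \mathbf{V}_n)$ reads
\begin{multline*}
\mathbf{V}_n^{\Gamma_0(pN^+)} \longrightarrow H^1(\Gamma, \mathbf{V}_n) \longrightarrow H^1(\Gamma_0(N^+), \mathbf{V}_n) \oplus H^1(\hat{\Gamma}_0(N^+), \mathbf{V}_n) \\
\xrightarrow{\res - \hat{\res}} H^1(\Gamma_0(pN^+), \mathbf{V}_n),
\end{multline*}
the last arrow being the difference of the two natural restriction maps.

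Assume $n > 0$. By Remark \ref{i} together with the identification \eqref{Vn} applied after base change to $K_p$ (which splits $\mathcal{B}$), the leftmost term $\mathbf{V}_n^{\Gamma_0(pN^+)}$ vanishes. Hence $H^1(\Gamma, \mathbf{V}_n)$ injects into the kernel of $\res - \hat{\res}$, and the lemma reduces to the joint injectivity of the pair of restrictions on $H^1(\Gamma_0(N^+), \mathbf{V}_n) \oplus H^1(\hat{\Gamma}_0(N^+), \mathbf{V}_n)$. The case $n = 0$ is handled similarly, noting that the surjectivity of the natural map $\mathbf{V}_0^{\Gamma_0(N^+)} \oplus \mathbf{V}_0^{\hat{\Gamma}_0(N^+)} = K_p \oplus K_p \to \mathbf{V}_0^{\Gamma_0(pN^+)} = K_p$ again forces the connecting map out of $\mathbf{V}_0^{\Gamma_0(pN^+)}$ to vanish.

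To control the kernel of $\res - \hat{\res}$, I would split via the Eisenstein/cuspidal decomposition of Remark \ref{RmkEC}. On the cuspidal factor, the Eichler-Shimura isomorphism \eqref{Diagram Eichler-Shimura isomorphism}---together with Jacquet-Langlands when $\mathcal{B}$ is a division algebra---identifies the pair of restrictions with the two classical degeneracy embeddings $S_k(\Gamma_0(N^+)) \oplus S_k(\Gamma_0(N^+)) \hookrightarrow S_k(\Gamma_0(pN^+))$ whose joint image is by construction the $p$-old subspace. On the Eisenstein factor, which is only present when $N^- = 1$, the analogous injectivity is immediate from the explicit level-raising description of Eisenstein series.

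The principal obstacle is precisely this joint injectivity of the degeneracy maps in the quaternionic setting, which requires Jacquet-Langlands to reduce to the well-known classical case. A convenient supporting observation is the identity $\cor \circ \res = [\Gamma_0(N^+) : \Gamma_0(pN^+)] \cdot \Id$ on $H^1(\Gamma_0(N^+), \mathbf{V}_n)$, which over the characteristic-zero field $K_p$ already forces each of $\res$ and $\hat{\res}$ to be individually injective; what remains is therefore the linear independence of their two images, reflecting the dichotomy of $U_p$-eigenvalues on the pair of $p$-old lifts of any fixed level-$N^+$ eigenform.
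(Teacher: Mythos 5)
Your overall skeleton is exactly the paper's: apply $\Gamma$-cohomology to the exact sequence \eqref{exact sequence vertices/edges}, rewrite the middle and right terms via Shapiro's lemma as in \eqref{shp}, and reduce the vanishing of $H^1(\Gamma,\mathbf{V}_n)$ to (i) vanishing of the connecting map out of $H^0(\Gamma_0(pN^+),\mathbf{V}_n)$, and (ii) injectivity of the map $\partial^\ast$ (your $\func{res}-\hat{\func{res}}$), further split into Eisenstein and cuspidal parts. Your treatment of (i) for both $n>0$ and $n=0$ matches the paper.

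Where you diverge is in how (ii) is established on the cuspidal part. The paper sidesteps classical old/newform theory entirely: it composes $\partial^\ast$ with the pair of corestrictions $\partial_\ast$ and computes $\partial_\ast\circ\partial^\ast$ explicitly as the $2\times 2$ matrix $\left(\begin{smallmatrix} p+1 & p^{-m}T_p \\ p^{-m}T_p & p+1\end{smallmatrix}\right)$ acting on $H^1(\Gamma_0(N^+),\mathbf{V}_n)^2$, and then invokes Deligne's bound $|a_p|\leq 2p^{m}\sqrt{p}<p^{m}(p+1)$ to see the determinant is invertible on the cuspidal factor. This is self-contained and avoids any appeal to $q$-expansions or Jacquet--Langlands transfer of Atkin--Lehner theory, whereas your route requires routing through the classical modular-curve picture. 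Your observation that $\cor\circ\res=(p+1)\cdot\Id$ gives \emph{individual} injectivity is in fact the diagonal entries of the paper's matrix, so you have reconstructed half of its argument.

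Your final justification for the remaining joint injectivity is, however, slightly off. The linear independence of the images of $\res$ and $\hat{\res}$ (equivalently, of $f(z)$ and $f(pz)$ for a fixed level-$N^+$ eigenform) is an elementary fact and does \emph{not} follow from, nor is it equivalent to, the distinctness of the $U_p$-eigenvalues $\alpha_p,\beta_p$ on the corresponding $p$-old $2$-plane; the two statements are independent, and the eigenvalue dichotomy is the harder one. The paper's matrix computation with Deligne's bound supplies a clean substitute that makes the reduction to classical newform theory unnecessary. Your Eisenstein-side handling (``immediate from the explicit level-raising description of Eisenstein series'') is also considerably more compressed than the paper's argument, which explicitly matches up systems of cusp representatives for $\Gamma_0(N^+)$, $\hat{\Gamma}_0(N^+)$ and $\Gamma_0(pN^+)$ and applies the corestriction-restriction identity at the level of cusp stabilizers.
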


\begin{proof}
The exact sequence \eqref{exact sequence vertices/edges} and the
identifications \eqref{shp} provided by Shapiro's lemma induce the long
exact sequence 
\begin{equation}
0\rightarrow H^0( \Gamma,\mathbf{V}_n)\rightarrow H^0( \Gamma _0( N^+) , 
\mathbf{V}_n)^{2}\rightarrow H^0( \Gamma_0(pN^+),\mathbf{V}_n)\overset{
\delta }{\rightarrow }...  \label{e1}
\end{equation}
\begin{equation*}
...\overset{\delta }{\rightarrow }H^1( \Gamma,\mathbf{V}_n)\overset{
\varepsilon }{\rightarrow }H^1( \Gamma_0( N^+),\mathbf{V}_n) ^{2}\overset{
\partial^{\ast }}{\rightarrow }H^1( \Gamma _0( pN^+),\mathbf{V}_n){\
\rightarrow }...
\end{equation*}

Notice first that $\varepsilon $ is a monomorphism. Indeed, if $n=0$, $%
\mathbf{V}_n=K_p$ is trivial as $\Gamma $-module. Thus $H^0(\Gamma,\mathbf{V}
_n)=H^0(\Gamma_0(N^+),\mathbf{V}_n)=H^0(\Gamma_0(pN^+),\mathbf{V}_n)=K_p$.
The exactness of \eqref{e1} implies our claim. If $n>0$, $%
H^0(\Gamma_0(pN^+), \mathbf{V}_n)=0$ by \cite[pag. 165 Lemma 2]{Hi} and we
again deduce that $\mathrm{ker}(\varepsilon )=0$.

It thus remains left to show that $\mathrm{Ker}(\partial^{\ast })=0$. Let us
first show that $\mathrm{Ker}(\partial ^{\ast})^c=0$. The map $%
\partial^{\ast }$ can be composed with the degeneracy map $%
\partial_{\ast}:H^1( \Gamma_0(pN^+),\mathbf{V}_n)\rightarrow
H^1(\Gamma_0(N^+),\mathbf{V}_n)^{2}$ given by $\partial_{\ast }=(\func{cor}
_{\Gamma_0(pN^+)}^{\Gamma_0(N^+)},\func{cor}_{\Gamma _0(pN^+)}^{\hat{\Gamma}
_0(N^+)})$. The reader may wish to recall the natural identifications
already made in \eqref{GahGa}.

A computation now shows that the endomorphism $\partial _{\ast }\cdot
\partial^{\ast }$ of $H^1( \Gamma _0( N^+) ,\mathbf{V}_n) ^{2}$ is 
\begin{equation*}
\begin{pmatrix}
p+1 & p^{-m}T_p \\ 
p^{-m}T_p & p+1%
\end{pmatrix}
.
\end{equation*}

Fix an embedding of $K_p$ into the field $\mathbb{C}$ of complex numbers. By
Deligne's bound, the complex absolute value of the eigenvalues of the Hecke
operator $T_p$ acting on $H^1(\Gamma_0(N^+),\mathbf{V}_n(\mathbb{C}))^{c}$
are bounded above by $2\sqrt{p^{k-1}}=2p^{m}\sqrt{p}$. It thus follows that $%
\partial _{\ast }\cdot \partial ^{\ast }$ restricts to a linear automorphism
of the cuspidal part of $H^1(\Gamma_0(N^+),\mathbf{V}_n(\mathbb{C}))$, and
thus $(\partial ^{\ast})^c$ is injective.

In order to conclude, let us now show that $\mathrm{Ker}(\partial
^{\ast})^{Eis}=0$ when $N^-=1$. Let $C_{\Gamma_0(N^+)}=\{s_1,...,s_{t}\}$ be
a set of representatives for the cusps of $\Gamma_0(N^+)$. One then can
check that $C_{\hat{\Gamma }_0(N^+)}=\left\{ \widehat{s}_i:=\omega _p
s_i\right\}$ and $C_{\Gamma_0(pN^+)}=\left\{ s_{i},\hat{s}_{i}\right\} $ are
systems of representatives for the cusps of $\hat{\Gamma }_0(N^+)$ and $%
\Gamma_0(pN^+)$, respectively.

It follows from \eqref{Diagram Eichler-Shimura isomorphism} and the
discussion around it that for our purposes it suffices to show that for each 
$i=1,...,t$ the natural map 
\begin{equation*}
H^1(\Gamma_0(N^+)_{s_i},\mathbf{V}_n(\mathbb{C}))\overset{\partial^{\ast }}{{%
\longrightarrow }} H^1(\Gamma_0(pN^+)_{s_i},\mathbf{V}_n(\mathbb{C}))
\end{equation*}
induced by $\partial^{\ast}$ by restriction is a monomorphism (and
analogously for $\hat{\Gamma }_0(N^+)$ and $\hat{s}_i$). But this is clear
because $\partial ^{\ast }$ is the restriction map $\func{res}
_{\Gamma_0(pN^+)_{s_i}}^{\Gamma_0(N^+)_{s_i}}$, which is injective by a
similar reason as before: the composition with the corresponding
corestriction map is multiplication by the index $[
\Gamma_0(N^+)_{s_i}:\Gamma_0(pN^+)_{s_i}] $, which is finite as it divides $%
[\Gamma_0(N^+):\Gamma_0(pN^+)] = p+1$.
\end{proof}

Consider the exact sequence of $\Gamma $-modules 
\begin{equation}
0\rightarrow \func{Div}^0( \mathcal{H}_p)(k_p) \rightarrow \func{Div}( 
\mathcal{H} _p)(k_p) \rightarrow \mathbb{Z}\rightarrow 0\text{,}
\label{Exact sequence degree zero divisors p-adic upper halfplane}
\end{equation}
take the tensor product with $\mathbf{P}_n$ and then form the long exact
sequence in homology, which yields a connecting map 
\begin{equation}
H_{2}( \Gamma ,\mathbf{P}_n) \overset{\partial _{2}}{\rightarrow }H_1(
\Gamma ,\func{Div}^0( \mathcal{H}_p)(k_p) \otimes \mathbf{P}_n) \text{.}
\label{partialp}
\end{equation}

Recall from §\ref{S14} and §\ref{S15} that the cuspidal part $\mathbf{H}
(K_p)^{c}$ of $\mathbf{H}(K_p)$ is naturally identified with the $p$-new
space of cuspidal modular forms of level $pN^+$ on the quaternion algebra $%
\mathcal{B}$. Let $\mathrm{pr}_c: \mathbf{H}(K_p)^{\vee } {\longrightarrow }
(\mathbf{H}(K_p)^{c})^{\vee}$ denote the natural projection. By an abuse of
notation, we shall use the same symbol $\mathrm{pr}_c$ for the map $\mathrm{%
\ \ pr}_c\oplus \mathrm{pr}_c$.

\begin{theorem}
\label{Theorem L-invariant} For every $n\geq 0$ the morphism 
\begin{equation*}
\mathrm{pr}_c\circ \Psi^{\func{ord}}\circ \partial_2 :H_{2}( \Gamma ,\mathbf{%
P}_n) \rightarrow (\mathbf{H}(K_p)^{c})^{\vee }
\end{equation*}
is surjective and induces an isomorphism 
\begin{equation*}
(\Psi^{\func{ord}}\circ \partial_2)^c: H_{2}( \Gamma ,\mathbf{P}_n) ^{c} 
\overset{\simeq }{\rightarrow }(\mathbf{H}(K_p)^{c})^{\vee }\text{.}
\end{equation*}
\end{theorem}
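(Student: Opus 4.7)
The plan is to reduce $\mathrm{pr}_c\circ\Psi^{\func{ord}}\circ\partial_2$ to a cap product on $\Gap$ by factoring $\Psi^{\func{ord}}$ through the Bruhat--Tits tree and invoking Shapiro's lemma. Under Theorem \ref{2parts}'s identification $\mathbf{H}(K_p)=H^1(\Gamma,C_{har}(\mathbf{V}_n))$, a distribution $\mu$ corresponds via $r$ of \eqref{Morita} to a harmonic cocycle $c$ with $c(e)(P)=\int_{U_e}P\,d\mu$, so
\begin{equation*}
\int_{\tau_1}^{\tau_2}P\,\omega_\mu^{\func{ord}}\;=\;\sum_{e\,:\,\mathrm{red}(\tau_1)\to\mathrm{red}(\tau_2)}c(e)(P).
\end{equation*}
This depends only on $\mathrm{red}_*(\tau_2-\tau_1)\in\func{Div}^0(\mathcal V)$, so $\Psi^{\func{ord}}$ factors through the $\Gamma$-equivariant reduction map $\mathrm{red}_*:\func{Div}^0(\mathcal H_p)(k_p)\otimes\mathbf{P}_n\to\func{Div}^0(\mathcal V)\otimes\mathbf{P}_n$. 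Let $C_1^{\mathcal T}\subset C_0(\mathcal E,\mathbb Z)$ denote the 1-chains of the tree (finitely supported antisymmetric functions on oriented edges). Contractibility of $\mathcal T$ identifies $\func{Div}^0(\mathcal V)\cong C_1^{\mathcal T}$ via the edge boundary, and Shapiro's lemma \eqref{shp} gives $H_*(\Gamma,C_1^{\mathcal T}\otimes\mathbf{P}_n)\cong H_*(\Gap,\mathbf{P}_n)$. Through these isomorphisms together with the parallel $\mathbf{H}(K_p)\hookrightarrow H^1(\Gamma,C_0(\mathcal E,\mathbf V_n))\cong H^1(\Gap,\mathbf V_n)$ coming from \eqref{shp}, the pairing $\Psi^{\func{ord}}$ is transformed into the cap product
\begin{equation*}
\mathrm{cap}\,:\,H_1(\Gap,\mathbf{P}_n)\longrightarrow H^1(\Gap,\mathbf V_n)^\vee
\end{equation*}
attached to the perfect pairing $\mathbf{P}_n\otimes\mathbf V_n\to K_p$.

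The reduction $\mathrm{red}_*$ now fits into a commutative diagram of short exact sequences of $\Gamma$-modules: the top row is \eqref{Exact sequence degree zero divisors p-adic upper halfplane} tensored with $\mathbf{P}_n$, and the bottom row is
\begin{equation*}
0\to C_1^{\mathcal T}\otimes\mathbf{P}_n\to\mathbb Z[\mathcal V]\otimes\mathbf{P}_n\to\mathbf{P}_n\to 0,
\end{equation*}
the right surjection being the augmentation of the contractible tree. Naturality of the connecting maps yields $\mathrm{red}_*\circ\partial_2=\partial_2^{\mathcal T}$, and via Shapiro $\partial_2^{\mathcal T}$ coincides with the Mayer--Vietoris boundary of the amalgamated decomposition $\Gamma=\Ga\star_{\Gap}\hGa$. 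Taking cuspidal parts is exact (Remark \ref{RmkEC}), and the dual counterpart of Remark \ref{i} (via Poincar\'e duality on Shimura curves) gives $H_2(\Ga,\mathbf{P}_n)^c=H_2(\hGa,\mathbf{P}_n)^c=0$. The cuspidal Mayer--Vietoris therefore produces an isomorphism
\begin{equation*}
\partial_2^{\mathcal T,c}\,:\,H_2(\Gamma,\mathbf{P}_n)^c\xrightarrow{\ \sim\ }H_1(\Gap,\mathbf{P}_n)^{p-new,\,c},
\end{equation*}
where $H_1^{p-new,c}$ is by definition the kernel, on cuspidal parts, of the inclusion-induced map $H_1(\Gap,\mathbf{P}_n)\to H_1(\Ga,\mathbf{P}_n)\oplus H_1(\hGa,\mathbf{P}_n)$.

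To close, the Petersson pairing is a perfect duality $H_1(\Gap,\mathbf{P}_n)^c\otimes H^1(\Gap,\mathbf V_n)^c\to K_p$ on cuspidal parts: Poincar\'e duality on the compact Shimura curve $X_0^{N^-}(pN^+)$ when $N^->1$, and the classical non-degeneracy of the Petersson inner product on cuspidal classes when $N^-=1$. Since the inclusion-induced homological map is adjoint to cohomological corestriction under $\mathrm{cap}$, the subspaces $H_1^{p-new,c}$ and $H^{1,p-old,c}$ are mutually orthogonal, so $\mathrm{cap}$ restricts to an isomorphism
\begin{equation*}
H_1(\Gap,\mathbf{P}_n)^{p-new,\,c}\xrightarrow{\ \sim\ }H^1(\Gap,\mathbf V_n)^{p-new,\,c,\,\vee}\;=\;(\mathbf H(K_p)^c)^\vee,
\end{equation*}
the last identification via Lemma \ref{Lemma harmonic cocycles/cusp forms}. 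Composing with the isomorphism of the previous paragraph yields the claimed cuspidal isomorphism, and surjectivity of $\mathrm{pr}_c\circ\Psi^{\func{ord}}\circ\partial_2$ onto $(\mathbf H(K_p)^c)^\vee$ follows at once. The most delicate step is the Petersson duality together with the compatibility of the $p$-new/$p$-old decomposition with it, especially when $N^-=1$ and the quotient is non-compact; the required orthogonality is expected from Hecke self-adjointness combined with the distinct $U_p$-eigenvalue systems on the two summands.
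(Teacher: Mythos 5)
Your route is essentially the homological mirror of the paper's cohomological argument, and it works, but the closing duality step is muddled. The paper factors $\Psi^{\func{ord}}$ through the tree exactly as you do (its $\Psi_1,\Psi_2$ are your $\mathrm{red}_*$ and $\func{Div}^0(\mathcal V)\cong\overline{\func{Div}}(\mathcal E)$), but it then dualizes immediately via the cap product (its $\Psi_3$), identifying $H_2(\Gamma,\mathbf P_n)^\vee\simeq H^2(\Gamma,\mathbf V_n)$ and $H_1(\Gamma,\overline{\func{Div}}(\mathcal E)\otimes\mathbf P_n)^\vee\simeq H^1(\Gamma,C_0(\mathcal E,\mathbf V_n))$, after which Lemmas \ref{Lemma harmonic cocycles/cusp forms} and \ref{Lemma p-new and p-old subspaces} finish the argument in cohomology. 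You instead run the Mayer--Vietoris boundary entirely in homology and only dualize at the very end; both are legitimate, and your version of Lemma \ref{Lemma p-new and p-old subspaces} (the isomorphism $\partial_2^{\mathcal T,c}$) is correct. However, three points in your last paragraph should be tightened. First, the pairing you are using between $H_1(\Gap,\mathbf P_n)$ and $H^1(\Gap,\mathbf V_n)$ is the cap/universal-coefficients pairing, not Petersson nor Poincar\'e duality; calling it Petersson risks confusing it with the $H^1\times H^1$ pairing that underlies \eqref{decomp}. Second, the inclusion-induced map $\iota_*$ on $H_1$ is adjoint to \emph{restriction}, not corestriction, under this pairing; the adjoint of $\mathrm{cor}$ is the transfer on homology. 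With the correct adjunction, $\ker(\iota_*)$ is automatically the annihilator of $\operatorname{Im}(\res)=H^{1,p\text{-old},c}$ — no appeal to Hecke self-adjointness or eigenvalue separation is needed for orthogonality. Third, orthogonality alone does not give the isomorphism onto $(\mathbf H(K_p)^c)^\vee$: you also need the dimension match, equivalently the direct-sum decomposition \eqref{decomp} of $H^1(\Gap,\mathbf V_n(K_p))^c$ into $p$-old and $p$-new parts (which is where Petersson genuinely enters, and is established earlier in the paper). Your closing appeal to "distinct $U_p$-eigenvalue systems" is both unnecessary and fragile — the $p$-old and $p$-new $U_p$-eigenvalues can coincide — so replace it by a direct citation of \eqref{decomp}, which is exactly what the paper's Lemma \ref{Lemma p-new and p-old subspaces} does.
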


\begin{proof}
Let us rewrite the homomorphism $\Psi ^{\func{ord}}$ as a composition of
several natural maps. First, consider the following commutative diagram with
exact rows: 
\begin{equation}
\begin{array}{ccccccccc}
0 & \rightarrow & \func{Div}^0( \mathcal{H}_p)(k_p) \otimes \mathbf{P}_n & 
\rightarrow & \func{Div}( \mathcal{H}_p)(k_p) \otimes \mathbf{P}_n & 
\rightarrow & \mathbf{P}_n & \rightarrow & 0 \\ 
&  & red\otimes 1\downarrow \text{ \ \ \ \ \ \ } &  & red\otimes 1\downarrow 
\text{ \ \ \ } &  & \parallel &  &  \\ 
0 & \rightarrow & \func{Div}^0( \mathcal{V}) \otimes \mathbf{P}_n & 
\rightarrow & \func{Div}( \mathcal{V}) \otimes \mathbf{P}_n & \rightarrow & 
\mathbf{P}_n & \rightarrow & 0.%
\end{array}
\label{Formula third exact sequence}
\end{equation}

The long exact sequence in homology yields a commutative diagram 
\begin{equation}
\begin{array}{ccc}
H_{2}( \Gamma ,\mathbf{P}_n) & \overset{\partial _{2}}{\rightarrow } & H_1(
\Gamma ,\func{Div}^0( \mathcal{H}_p)(k_p) \otimes \mathbf{P}_n) \\ 
& \overset{\partial _{\mathcal{V}}}{\searrow } & \downarrow \Psi _1 \\ 
&  & H_1( \Gamma ,\func{Div}^0( \mathcal{V}) \otimes \mathbf{P}_n) .%
\end{array}
\label{dia1}
\end{equation}

Second, let $\overline{\func{Div}}( \mathcal{E}) $ be the quotient of $\func{
Div}( \mathcal{E}) $ obtained by imposing the relation $\overline{e}+e=0$
for all $e\in \mathcal{E}$. Note that the morphisms 
\begin{equation*}
\begin{matrix}
path\text{ }: & \func{Div}^0( \mathcal{V}) & \rightarrow & \overline{\func{
Div}}( \mathcal{E}) \\ 
& v_1-v_{2} & \mapsto & \sum\nolimits_{e:v_1\rightarrow v_{2}}e \\ 
&  &  &  \\ 
\partial : & \overline{\func{Div}}( \mathcal{E}) \, & \rightarrow & \,\func{
Div}^0(\mathcal{V}) \\ 
& e & \mapsto & \,s( e) -t( e)%
\end{matrix}%
\end{equation*}
are one the inverse of the other and identify the two $\Gamma $-modules. In
particular we obtain from $path$ the commutative diagram 
\begin{equation}
\begin{array}{ccc}
H_{2}( \Gamma ,\mathbf{P}_n) & \overset{\partial _{\mathcal{V}}}{\rightarrow 
} & H_1( \Gamma ,\func{Div}^0( \mathcal{V}) \otimes \mathbf{P}_n) \\ 
& \overset{\partial _{\mathcal{E}}}{\searrow } & \downarrow \Psi _{2} \\ 
&  & H_1( \Gamma ,\overline{\func{Div}}( \mathcal{E}) \otimes \mathbf{P}_n) .%
\end{array}
\label{dia2}
\end{equation}
where the morphism $\partial _{\mathcal{E}}$ is obtained from the second row
of $( \text{\ref{Formula third exact sequence}}) $ and the identification $%
\func{Div}^0( \mathcal{V}) =\overline{\func{Div}}( \mathcal{E}) $.

Third, consider the exact sequence obtained from 
\eqref{exact
sequence vertices/edges} with $A=\mathbf{V}_n$: 
\begin{equation}  \label{Formula exact sequence}
0\rightarrow \mathbf{V}_n\rightarrow C( \mathcal{V},\mathbf{V}_n) \overset{
\partial ^{\ast }}{\rightarrow }C_0( \mathcal{E},\mathbf{V}_n) \rightarrow 0 
\text{.}
\end{equation}

The dual exact sequence of \eqref{Formula exact sequence} is canonically
identified with the exact sequence obtained from the second row of $( \text{ %
\ref{Formula third exact sequence}}) $ and the identification $\func{Div}^0( 
\mathcal{V}) =\overline{\func{Div}}( \mathcal{E}) $: 
\begin{equation}
0\rightarrow \overline{\func{Div}}( \mathcal{E}) \otimes \mathbf{P}_n\, 
\overset{\partial \otimes \func{Id}}{\longrightarrow }\,\func{Div}( \mathcal{%
V}) \otimes \mathbf{P}_n\rightarrow \mathbf{P}_n\rightarrow 0.
\label{Formula second exact sequence}
\end{equation}

More precisely the duality between \eqref{Formula exact sequence} and %
\eqref{Formula second exact sequence} is induced by the evaluation pairings: 
\begin{equation*}
\begin{matrix}
\left\langle -,-\right\rangle _{\mathcal{V}}: & \func{Div}( \mathcal{V})
\otimes \mathbf{P}_n\otimes C( \mathcal{V},\mathbf{V}_n) & \rightarrow & K_p
\\ 
& v\otimes P\otimes c & \mapsto & c( v,P) \\ 
&  &  &  \\ 
\left\langle -,-\right\rangle _{\mathcal{E}}: & \overline{\func{Div}}( 
\mathcal{E}) \otimes \mathbf{P}_n\otimes C_0( \mathcal{E},\mathbf{V}_n) & 
\rightarrow & K_p \\ 
& e\otimes P\otimes c & \mapsto & c( e,P) .%
\end{matrix}%
\end{equation*}

By cap product, these pairings yield the following commutative diagram: 
\begin{equation}
\begin{array}{cccc}
H_{2}( \Gamma ,\mathbf{P}_n) & \overset{\partial _{\mathcal{E}}}{\rightarrow 
} & H_1( \Gamma ,\overline{\func{Div}}( \mathcal{E}) \otimes \mathbf{P}_n) & 
\\ 
\downarrow &  & \downarrow \Psi _{3} &  \\ 
H^{2}( \Gamma ,\mathbf{V}_n) ^{\vee } & \overset{\delta ^{\vee }}{
\rightarrow } & H^1( \Gamma ,C_0( \mathcal{E},\mathbf{V}_n) ) ^{\vee }. & 
\end{array}
\label{dia3}
\end{equation}

The Universal Coefficients theorem guarantees that the above vertical arrows
are isomorphisms. With these notations the morphism $\Psi ^{\func{ord}}$ is
obtained as follows. Let $\Psi _{4}$ be the dual of the morphism 
\begin{equation*}
\mathbf{H}(K_p)=H^1(\Gamma ,C_{har}(\mathbf{V}_n))\rightarrow H^1(\Gamma
,C_0( \mathcal{E},\mathbf{V}_n)).
\end{equation*}

Then we have 
\begin{equation*}
\Psi ^{\func{ord}}=\Psi _{4}\circ \Psi _{3}\circ \Psi _{2}\circ \Psi _1\text{
.}
\end{equation*}

Hence the morphism $\mathrm{pr}_c\circ \Psi ^{\func{ord}}$ is obtained by
further composition with the morphism $\mathrm{pr}_c$ dual to the inclusion $%
i^{c}:\mathbf{H}(K_p)^{c}\subset \mathbf{H}(K_p)$: 
\begin{equation*}
\mathrm{pr}_c\circ \Psi^{\func{ord}} \circ \partial_2 =\mathrm{pr}_c\circ
\Psi _{4}\circ \Psi _{3}\circ \Psi _{2}\circ \Psi _1\text{.}
\end{equation*}

From the commutativity of diagrams (\ref{dia1}), (\ref{dia2}) and (\ref{dia3}
), we obtain the commutative diagram 
\begin{equation*}
\begin{array}{ccccc}
H_{2}( \Gamma ,\mathbf{P}_n) & \overset{\partial _{2}}{\rightarrow } & H_1(
\Gamma ,\func{Div}^0( \mathcal{H}_p) \otimes \mathbf{P}_n) &  &  \\ 
\downarrow &  & \downarrow \Psi _{3}\circ \Psi _{2}\circ \Psi _1 &  &  \\ 
H^{2}( \Gamma ,\mathbf{V}_n) ^{\vee } & \overset{\delta ^{\vee }}{
\rightarrow } & H^1( \Gamma ,C_0( \mathcal{E},\mathbf{V}_n) ) ^{\vee } & 
\overset{\mathrm{pr}_c\circ \Psi _{4}}{\rightarrow } & (\mathbf{H}(K_p)
^{c})^{\vee}.%
\end{array}%
\end{equation*}

As already mentioned, the left vertical arrow is an isomorphism. It remains
to prove that $\mathrm{pr}_c\circ \Psi _{4}\circ \delta ^{\vee }$ restricts
to an isomorphism on the cuspidal parts, since then the first statement
about surjectivity will follow then from Remark \ref{RmkEC}.

Equivalently, since the composition $\mathrm{pr}_c\circ \Psi _{4}$ is dual
to the morphism 
\begin{equation*}
\mathbf{H}(K_p)^{c}\subset \mathbf{H}(K_p)\rightarrow H^1(\Gamma ,C_0( 
\mathcal{E}, \mathbf{V}_n))\text{,}
\end{equation*}
we need to show that the morphism 
\begin{equation*}
\mathbf{H}(K_p)=H^1(\Gamma ,C_{har}(\mathbf{V}_n))\rightarrow H^1(\Gamma
,C_0( \mathcal{E},\mathbf{V}_n))\rightarrow H^{2}( \Gamma ,\mathbf{V}_n)
=H_{2}( \Gamma ,\mathbf{P}_n) ^{\vee }
\end{equation*}
induces an isomorphism when restricted to the cuspidal parts. This is the
content of Lemmas \ref{Lemma harmonic cocycles/cusp forms} and \ref{Lemma
p-new and p-old subspaces}.
\end{proof}

\begin{remark}
\label{Remark L-invariant} When $n>0$ and $N^->1$ we have $\mathrm{pr}_c= 
\mathrm{Id}$. Furthermore $H_{2}( \Gamma ,\mathbf{P}_n) ^{c}=H_{2}( \Gamma , 
\mathbf{P}_n) $, $\mathbf{H}(K_p)^{c}=\mathbf{H}(K_p)$ and the morphism $%
\Phi ^{\func{ ord}}$ is an isomorphism.
\end{remark}

Let $\mathbb{T}:=\mathbb{T}_{\Gamma_0(pN^+)}^{p-new}$ denote the maximal
quotient of the Hecke algebra $\mathcal{H}(\Gamma_0(pN^+))\otimes \mathbb{Q}$
acting on $S_k(\Gamma_0(pN^+))^{p-new}$ and put $\mathbb{T}_p=\mathbb{T}
\otimes_{\mathbb{Q}}\mathbb{Q}_p$, $\mathbb{T}_{K_p}=\mathbb{T }\otimes_{ 
\mathbb{Q}} K_p$.

\begin{corollary}
\label{Corollary existence and uniqueness of the L-invariant} There exists a
unique endomorphism $\mathcal{L}\in \func{End}_{\mathbb{T}_{K_p}}((\mathbf{H}
(K_p)^{c})^{\vee})$ such that 
\begin{equation}  \label{L}
\mathrm{pr}_c\circ \Psi ^{\log }\circ \partial_2 =\mathcal{L}\circ \mathrm{\
pr }_c \circ \Psi ^{\func{ord}}\circ \partial_2 : H_{2}( \Gamma ,\mathbf{P}
_n) \rightarrow (\mathbf{H}(K_p)^{c})^{\vee}\text{.}
\end{equation}
\end{corollary}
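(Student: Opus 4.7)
The plan is to leverage Theorem \ref{Theorem L-invariant} directly: it tells us that $\mathrm{pr}_c\circ\Psi^{\func{ord}}\circ\partial_2$ is surjective (which immediately handles uniqueness of $\mathcal{L}$) and that its restriction to the cuspidal part is an isomorphism $(\Psi^{\func{ord}}\circ\partial_2)^c\colon H_2(\Gamma,\mathbf{P}_n)^c\overset{\simeq}{\to}(\mathbf{H}(K_p)^c)^{\vee}$. So the natural candidate is
\begin{equation*}
\mathcal{L}:=(\mathrm{pr}_c\circ\Psi^{\log}\circ\partial_2)^c\circ \bigl((\Psi^{\func{ord}}\circ\partial_2)^c\bigr)^{-1},
\end{equation*}
and the whole content of the proof is that this definition makes sense and is Hecke-linear.

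First I would argue that $\mathrm{pr}_c\circ\Psi^{\log}\circ\partial_2$ annihilates the Eisenstein part $H_2(\Gamma,\mathbf{P}_n)^{Eis}$, so that it genuinely descends to a map out of $H_2(\Gamma,\mathbf{P}_n)^c$. This is a formal consequence of Remark \ref{RmkEC}: the connecting map $\partial_2$ is induced by a morphism of $\mathcal{B}^\times$-modules (hence is $\mathcal{H}(\Gamma)$-equivariant), the pairings of Definition \ref{defpairings} are $\mathrm{GL}_2(\mathbb{Q}_p)$-invariant so the induced maps $\Psi^{\log},\Psi^{\func{ord}}$ obtained by cap-product are Hecke-equivariant in the sense of \eqref{Hecke}, and $\mathrm{pr}_c$ is the cuspidal projector. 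Since the target $(\mathbf{H}(K_p)^c)^\vee$ admits the trivial Eisenstein/cuspidal decomposition (it is purely cuspidal), any Hecke-equivariant morphism into it must kill the Eisenstein summand; applying this to $\mathrm{pr}_c\circ\Psi^{\log}\circ\partial_2$ gives the desired factorisation.

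Once both maps factor through $H_2(\Gamma,\mathbf{P}_n)^c$, the definition of $\mathcal{L}$ above makes sense because $(\Psi^{\func{ord}}\circ\partial_2)^c$ is invertible by Theorem \ref{Theorem L-invariant}, and \eqref{L} holds by construction on the cuspidal part and trivially on the Eisenstein part. Uniqueness is immediate: if $\mathcal{L}'$ also satisfies \eqref{L}, then $(\mathcal{L}-\mathcal{L}')\circ\mathrm{pr}_c\circ\Psi^{\func{ord}}\circ\partial_2=0$, and surjectivity of the latter forces $\mathcal{L}=\mathcal{L}'$. For the $\mathbb{T}_{K_p}$-linearity of $\mathcal{L}$, I would observe that both sides of \eqref{L} are morphisms of $\mathcal{H}(\Gamma)\otimes K_p$-modules, so $\mathcal{L}$ commutes with every Hecke operator on $(\mathbf{H}(K_p)^c)^\vee$; passing to the quotient $\mathbb{T}_{K_p}$ (the faithful quotient acting on the $p$-new cuspidal part) then gives the required endomorphism in $\mathrm{End}_{\mathbb{T}_{K_p}}((\mathbf{H}(K_p)^c)^\vee)$.

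The only point requiring a genuine check, and hence the main (mild) obstacle, is the $\mathrm{GL}_2(\mathbb{Q}_p)$-invariance of the two pairings of Definition \ref{defpairings}. For the ordinary pairing this is essentially combinatorial: the assignment $e\mapsto U_e$ is $\mathrm{GL}_2(\mathbb{Q}_p)$-equivariant and the path from $\mathrm{red}(\tau_1)$ to $\mathrm{red}(\tau_2)$ transforms equivariantly. For the logarithmic pairing this is exactly the content of Lemma \ref{Cohomology of distrubutions lemma}, which shows that the divisor-to-function map is $\mathrm{GL}_2(\mathbb{Q}_p)$-equivariant modulo $\mathbf{P}_n$, so the pairing against a distribution in $\mathcal{D}_n^0(\mathbb{P}^1(\mathbb{Q}_p))$ is well-defined and invariant. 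Everything else is formal manipulation with cap products and the Eisenstein/cuspidal splitting.
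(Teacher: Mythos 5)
Your proof is correct and takes essentially the same route as the paper: both define $\mathcal{L}$ as $(\mathrm{pr}_c\circ\Psi^{\log}\circ\partial_2)^c\circ\bigl((\Psi^{\func{ord}}\circ\partial_2)^c\bigr)^{-1}$, using the isomorphism on cuspidal parts supplied by Theorem \ref{Theorem L-invariant}. The only cosmetic difference is in the uniqueness step, where you appeal directly to surjectivity of $\mathrm{pr}_c\circ\Psi^{\func{ord}}\circ\partial_2$, while the paper instead splits a competing endomorphism $\tilde{\mathcal{L}}=\tilde{\mathcal{L}}^{Eis}\oplus\tilde{\mathcal{L}}^c$ via Remark \ref{RmkEC} and notes $\tilde{\mathcal{L}}^{Eis}=0$; the two arguments are formally equivalent.
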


\begin{proof}
Let $i: (\mathbf{H}(K_p)^c)^{\vee} \overset{\sim}{{\rightarrow }}
H_2(\Gamma, \mathbf{P}_n)^c$ be the inverse of the isomorphism $(\Psi^{\func{
ord}}\circ \partial_2)^c$ of Theorem \ref{Theorem L-invariant} and define $%
\mathcal{L}= (\Psi^{log}\circ \partial_2)^c \circ i$. Since $i\circ \Psi ^{ 
\func{ord} }\circ \partial_2$ is the natural projection $H_{2}( \Gamma , 
\mathbf{P}_n) \rightarrow H_{2}( \Gamma ,\mathbf{P}_n)^c$, it is clear that %
\eqref{L} holds true with this choice of ${\mathcal{L}}$. As for the
uniqueness, let $\tilde{{\mathcal{L}}}\in \func{End}_{\mathbb{T}_{\mathbb{Q}
_p}}( (\mathbf{H}(K_p) ^{c})^{\vee})$ be any endomorphism satisfying %
\eqref{L} and let $\tilde{{\mathcal{L}}}=\tilde{{\mathcal{L}}}^{Eis}\oplus 
\tilde{{\mathcal{L}}}^c$ denote its Eisenstein/cuspidal decomposition
(cf.\,Remark \ref{RmkEC}). Since the Eisenstein subspace of $(\mathbf{H}
(K_p)^{c})^{\vee}$ is trivial, it follows that $\tilde{{\mathcal{L}}}
^{Eis}=0 $. Hence $\tilde{{\mathcal{L}}}= \tilde{{\mathcal{L}}}^c$ and, by
Theorem \ref{Theorem L-invariant}, $\tilde{{\mathcal{L}}}^c={\mathcal{L}} $
is necessarily the endomorphism defined above.
\end{proof}

\begin{definition}
\label{Linv} The ${\mathcal{L}}$-invariant of the space $S_{k}(\Gamma
_{0}(pN^{+}))^{p-new} $ of $p$-new modular forms is the endomorphism 
\begin{equation*}
\mathcal{L}\in \func{End}_{\mathbb{T}_{K_p}}((\mathbf{H}(K_p)^{c})^{\vee })
\end{equation*}
appearing in the above corollary.
\end{definition}

By Remark \ref{rankone}, $(\mathbf{H}(K_p)^{c})^{\vee }$ is a free rank one $%
\mathbb{T}_{K_p}$-module. Hence $\mathcal{L}\in \mathbb{T}_{K_p}$. But we
can even claim that ${\mathcal{L}}\in \mathbb{T}_p$, because our
construction of the ${\mathcal{L}}$-invariant is valid for any complete
field extension $K_p/\mathbb{Q}_p$ and it is clear from Corollary \ref%
{Corollary existence and uniqueness of the L-invariant} that it is invariant
under base change.

\section{Monodromy modules}

\label{SMonodromy}

\subsection{Fontaine-Mazur theory}

\label{S41}

As in §\ref{S22}, fix a complete field extension $K_p/\mathbb{Q}_p$ and let $%
k_p/\mathbb{Q}_p$ denote the maximal unramified sub-extension of $K_p/ 
\mathbb{Q}_p$. Write $\sigma \in \func{Aut}(k_p)$ for the absolute Frobenius
of $k_p$. Throughout in this section, $k\geq 2$ is a fixed positive even
integer.

Let $\mathbb{T}_p$ be a finite dimensional commutative $\mathbb{Q}_p$
-algebra and write $\mathbb{T}_{k_p}=\mathbb{T}_p\otimes k_p$ and $\mathbb{T}
_{K_p}=\mathbb{T}_p\otimes K_p$. Set $\sigma _{\mathbb{T}_{k_p}}:=\func{Id}
\otimes \sigma $ on $\mathbb{T}_{k_p}$.

\begin{definition}
\label{2dimMon} A \emph{two dimensional monodromy $\mathbb{T}_{p}$-module
over $K_{p}$} is a $4$-ple $(D,\varphi ,N,F^{\cdot })$ where $D$ is a $
\mathbb{T}_{k_{p}}$-module, $\varphi :D{\rightarrow }D$ is $\sigma $-linear
endomorphism (i.e.\thinspace $\varphi (ax)=\sigma (a)x$ for all $a\in
k_{p},x\in D$) and $N:D{\rightarrow }D$ is a $k_{p}$-linear endomorphism
such that

\begin{itemize}
\item[(a)] $F^{\cdot }$ is a filtration on the $K_p$-vector space $%
D\otimes_{k_p} K_p$ of the form 
\begin{equation*}
D\otimes K_p=F^{0}\supset F^{1}=...=F^{k-1}\supset F^{k}=0
\end{equation*}
where $F^{k-1}$ is a free $\mathbb{T}_{K_p}$-module of rank one;

\item[(b)] $D\otimes K_{p}=F^{k-1}\oplus N_{K_{p}}(D\otimes K_{p})$ as a $%
\mathbb{T}_{K_{p}}$-module, with $N_{K_{p}}:F^{k-1}\rightarrow
N_{K_{p}}(D\otimes K_{p})$ a $\mathbb{T}_{K_{p}}$-module isomorphism.

\item[(c)] $N\cdot \varphi =p\varphi \cdot N$ and, for any $T\in \mathbb{T}
_{k_{p}}$, $\varphi T=\sigma _{\mathbb{T}_{k_{p}}}(T)\varphi $ and $TN=NT$.
\end{itemize}
\end{definition}

See \cite{CI}, \cite[§2]{IS} and \cite[§9, p.\,12]{Ma} for related but
slightly different notions, and for proofs of some of the claims.

\vspace{0.3cm}

Let $D=(D,\varphi ,N,F^{\cdot })$ be two dimensional monodromy $\mathbb{T}_p$
-module over $K_p$, which by an abuse of notation sometimes will be referred
simply as $D$. When we forget the $\mathbb{T}_p$-structure, $D$ is what it
is customary to call a filtered Frobenius monodromy module, or simply a $%
(\varphi,N)$-module over $K_p$. Write $\mathrm{MF} _{K_p}(\varphi,N) $ for
the category of such objects, in which a morphism is a homomorphism of $k_p$
-modules preserving the filtrations and commuting with $\varphi$ and $N$. As
an illustrative example, multiplication by a scalar $a\in k_p$ on $D$ is an
endomorphism of vector spaces over $k_p$ which is a morphism in $\mathrm{MF}
_{K_p}(\varphi,N)$ if and only if $a\in \mathbb{Q}_p$. The category $\mathrm{%
\ MF}_{K_p}(\varphi,N)$ is an additive tensor category admitting kernels and
cokernels.

\begin{remark}
\label{basechange} If $K_{p}^{+}\supseteq K_{p}$ is a complete field
extension of $\mathbb{Q}_{p}$ containing $K_{p}$, then the maximal
unramified sub-extension $k_{p}^{+}$ of $K_{p}^{+}/\mathbb{Q}_{p}$ contains $%
k_{p}$, and there is a natural obvious notion of base change of monodromy
modules: $D_{K_{p}^{+}}:=(D\otimes _{k_{p}}k_{p}^{+},\varphi _{k_{p}}\otimes
\sigma _{k_{p}^{+}/k_{p}},N_{k_{p}}\otimes k_{p}^{+},F^{\cdot }\otimes
K_{p}^{+})$ is a two-dimensional monodromy $\mathbb{T}_{p}$-module over $%
K_{p}^{+}$.
\end{remark}

In our applications in §\ref{S42}, we shall be working with monodromy
modules over the quadratic unramified extension $\mathbb{Q}_{p^2}$ of $%
\mathbb{Q}_p$ which in fact can be obtained as the base change of a
monodromy module over $\mathbb{Q}_p$.

Consider the slope decomposition 
\begin{equation*}
D=\tbigoplus\nolimits_{\alpha\in \mathbb{Q}}D^{\alpha}
\end{equation*}
where for $\alpha = r/s$, $r,s \in \mathbb{Z}$, $s>0$, $D^{\alpha}\subset D$
is the largest subspace of $D$ which has an ${\mathcal{O}}_{k_p}$-stable
lattice $D_0$ with $\varphi^s(D_0)$ = $p^r D_0$.

Since $N\neq 0$ by (b) and $N(D^{\alpha +1})\subset D^{\alpha }$ by (c),
there exists $\lambda \in \mathbb{Q}$ such that $D^{\lambda },D^{\lambda
+1}\neq 0$ are free $\mathbb{T}_{k_{p}}$-modules of rank $1$ and the map $%
N:D^{\lambda +1}\rightarrow D^{\lambda }$ is non-zero. It can be shown that $%
D$ is free of rank two over $\mathbb{T}_{k_{p}}$ and we deduce that such a $%
\lambda $ is unique, and we call it the \emph{slope} of $D$. It is easy to
check that 
\begin{eqnarray}
&&D=D^{\lambda }\oplus D^{\lambda +1},\quad D^{\lambda }=\ker N=N(D)\text{,}
\label{Lemma slope decomposition} \\
&&\text{with }D^{i}\simeq \mathbb{T}_{k_{p}}\text{ for }i=\lambda ,\lambda
+1 \text{.}
\end{eqnarray}

\begin{definition}
\label{L-FM} The \emph{$\mathcal{L}$-invariant $\mathcal{L}_{D}$ of $%
D=(D,\varphi ,N,F^{\cdot })$} is defined to be the unique element $\mathcal{%
\ L } _{D}\in \mathbb{T}_{K_p}$ such that 
\begin{equation*}
x-\mathcal{L}_{D}N_{K_p}(x)\in F^{k-1}\text{ for every }x\in D^{\lambda
+1}\otimes K_p\text{.}
\end{equation*}
\end{definition}

The existence and uniqueness of $\mathcal{L}_{D}\in \mathbb{T}_{K_p}$ are
again easy to check.

\begin{lemma}
\label{Lemma endomorphisms of a monodromy module} $\mathbb{T}_p\simeq \func{
End}_{\func{MF}_{K_p}^{ad}(\varphi ,N)}(D)\text{.}$
\end{lemma}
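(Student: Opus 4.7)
The plan is to show both inclusions $\mathbb{T}_p \hookrightarrow \End(D) \hookrightarrow \mathbb{T}_p$ by exploiting the very rigid structure provided by the slope decomposition together with the rank one conditions in Definition \ref{2dimMon}.

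First I would observe that any $a\in \mathbb{T}_p$ acts on $D$ via its $\mathbb{T}_{k_p}$-module structure, and that this endomorphism lies in $\End_{\MF_{K_p}^{ad}(\varphi,N)}(D)$: it is $k_p$-linear, commutes with $N$ by the last part of condition (c), commutes with $\varphi$ because $\varphi T = \sigma_{\mathbb{T}_{k_p}}(T)\varphi$ and $\sigma_{\mathbb{T}_{k_p}}$ fixes $\mathbb{T}_p$, and preserves the filtration because each $F^i$ is a $\mathbb{T}_{K_p}$-submodule of $D\otimes K_p$. This gives the injection $\mathbb{T}_p\hookrightarrow \End(D)$ (injectivity because $D$ is free of rank two over $\mathbb{T}_{k_p}$, hence faithful as a $\mathbb{T}_p$-module).

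For the reverse inclusion, let $f\in \End_{\MF_{K_p}^{ad}(\varphi,N)}(D)$. Since $f$ is $\mathbb{T}_{k_p}$-linear and commutes with $\varphi$, it preserves the slope decomposition $D=D^\lambda\oplus D^{\lambda+1}$ of \eqref{Lemma slope decomposition}. On each summand $D^i$, which is free of rank one over $\mathbb{T}_{k_p}$, the endomorphism $f|_{D^i}$ is given by multiplication by a scalar $a_i\in \mathbb{T}_{k_p}$. Choose a $\mathbb{T}_{k_p}$-basis $w$ of $D^{\lambda+1}$ and write $\varphi(w)=\alpha w$ with $\alpha\in \mathbb{T}_{k_p}^\times$. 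Comparing $f\varphi(w)=\alpha a_{\lambda+1}w$ with $\varphi f(w)=\sigma_{\mathbb{T}_{k_p}}(a_{\lambda+1})\alpha w$ forces $\sigma_{\mathbb{T}_{k_p}}(a_{\lambda+1})=a_{\lambda+1}$, i.e., $a_{\lambda+1}\in \mathbb{T}_p$; the same argument applied to $D^\lambda$ gives $a_\lambda\in \mathbb{T}_p$. Finally, using that $N:D^{\lambda+1}\to D^\lambda$ is a $\mathbb{T}_{k_p}$-linear isomorphism of free rank one modules (which follows from condition (b) together with \eqref{Lemma slope decomposition}), the relation $fN=Nf$ evaluated on $w$ yields $a_\lambda=a_{\lambda+1}$. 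Hence $f$ is multiplication by a single element of $\mathbb{T}_p$, and combining with the first paragraph we conclude $\mathbb{T}_p\simeq \End_{\MF_{K_p}^{ad}(\varphi,N)}(D)$.

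The main subtlety I expect is the step forcing $a_\lambda = a_{\lambda+1}$, which requires knowing that $N$ restricts to a bona fide $\mathbb{T}_{k_p}$-isomorphism $D^{\lambda+1}\simeq D^\lambda$ and not just a surjection; this has to be extracted carefully from condition (b) of Definition \ref{2dimMon} (where $N_{K_p}$ is declared to be an isomorphism onto $N_{K_p}(D\otimes K_p)$ as $\mathbb{T}_{K_p}$-modules) by descending from $K_p$ back to $k_p$ and pairing it with the description $D^\lambda = \ker N = N(D)$. Once this is in place, the filtration condition plays no additional role because scalar multiplication by an element of $\mathbb{T}_p\subset \mathbb{T}_{K_p}$ automatically stabilizes every $F^i$.
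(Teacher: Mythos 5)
Your argument is correct and follows essentially the same route as the paper: use $\varphi$-equivariance to see that $f$ preserves the slope decomposition $D=D^\lambda\oplus D^{\lambda+1}$, use the $\mathbb{T}_{k_p}$-isomorphism $N:D^{\lambda+1}\to D^\lambda$ to identify the two scalars, and use commutation with the $\sigma$-linear $\varphi$ to force $\sigma$-invariance, hence membership in $\mathbb{T}_p$. The only cosmetic difference is the order of the last two steps (the paper first applies the $N$-relation to get a single scalar $t$ and then invokes $\varphi$-commutation, whereas you first descend each $a_i$ to $\mathbb{T}_p$ and then apply the $N$-relation), and a different but equally valid justification of injectivity.
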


\begin{proof}
It follows from (a)-(c) that there is a natural map $\mathbb{T}_p\overset{
\eta}{{\rightarrow }} \func{End}_{\func{MF}_{K_p}^{ad}(\varphi ,N)}(D)$. The
algebra $\mathbb{T}_{K_p}$ preserves $F^{k-1}$ and it follows from (a) that
the above map induces an isomorphism $\func{End} _{K_p}(F^{k-1}) =\mathbb{T}
_{K_p}$. In particular $\eta$ is injective. As for surjectivity, let $f\in 
\func{End}_{\func{MF} _{K_p}(\varphi,N)}(D)$. Since $f$ commutes with $%
\varphi$, it preserves the slope decomposition 
\eqref{Lemma slope
decomposition}. For $i\in \{\lambda, \lambda+1\}$, let $t_{f}^{i}\in \mathbb{%
\ T}_{k_p}$ be such that $f_{|D^{i}}=t_f^{i}\in \func{End}_{k_p}(D^i)$.

Since $N: D^{\lambda +1} {\rightarrow } D^{\lambda }$ is an isomorphism of $%
\mathbb{T}_{k_p}$-modules by \eqref{Lemma slope
decomposition}, we may write $D^{\lambda+1}=\mathbb{T}_{k_p}\cdot e$, $%
D^{\lambda}=\mathbb{T}_{k_p} N(e)$ for some $e\in D^{\lambda+1}$. Since $%
t_{f}^{\lambda +1}N(e)=Nt_{f}^{\lambda +1}(e) = N f (e)=f N
(e)=t_{f}^{\lambda } N (e)$ we deduce that $t:=t_{f}^{\lambda
+1}=t_{f}^{\lambda }$.

Finally, since $t$ must commute with the $\sigma$-linear endomorphism $%
\varphi$, it follows that $t\in \mathbb{T}_p$.
\end{proof}

Along with ${\mathcal{L}}_D$, one may also attach to $D$ the following
invariant. The notation is as in the previous proof.

\begin{definition}
Let $U=U_D\in \mathbb{T}_{k_p}$ be the element such that $\varphi N(e) =U
N(e)$.
\end{definition}

Notice that $U$ exists and is well-defined, because $D^{\lambda}$ is
preserved by $\varphi $ and $D^{\lambda}=\mathbb{T}_{k_p}\cdot N(e)$. The
reader may check that $U$ does not depend on the choice of the generator $e$
of $D^{\lambda+1}$.

As a final remark in this short review of monodromy modules, we quote the
following simple but clarifying fact: the invariants $U_{D}$ and ${\mathcal{%
L }}_{D}$ of a two dimensional monodromy $\mathbb{T}_{p}$-module $D$ over $%
K_{p}$ are not only invariants of $D$, but actually \emph{completely
determine} $D$ up to isomorphism. More precisely, we can prove the following
statement:

\begin{proposition}
\label{UL} For any pair of elements $U\in \mathbb{T}_{k_{p}}$ and ${\mathcal{%
\ L}}\in \mathbb{T}_{K_{p}}$ there exists a two dimensional monodromy $\mathbb{\ T}_{p}$-module $D_{U,{\mathcal{L}}}$ over $K_{p}$ such that $U_{D_{U,{\mathcal{L}}}}=U$ and $\mathcal L_{D_{U,{\mathcal{L}}}}=\mathcal L$. It is admissible if and only if its slope is $\left( k-2\right) /2$.
Moreover, for any two-dimensional monodromy $\mathbb{\ T}_{p}$-module $D$,
\begin{equation}
D\simeq D_{U,{\mathcal{L}}}\,\text{ if and only if }\,U_{D}=U,\,{\mathcal{L}}
_{D}={\mathcal{L}}\text{.}
\end{equation}

\end{proposition}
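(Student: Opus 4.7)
The plan is to build $D_{U,\mathcal{L}}$ explicitly as a free rank-two $\mathbb{T}_{k_p}$-module, extract its invariants directly, and then invoke the slope decomposition \eqref{Lemma slope decomposition} to conclude that any two-dimensional monodromy $\mathbb{T}_p$-module $D$ is determined up to isomorphism by the pair $(U_D,\mathcal{L}_D)$. Concretely, set $D := \mathbb{T}_{k_p} e \oplus \mathbb{T}_{k_p} f$, declare $N(e):=f$ and $N(f):=0$, and let $\varphi$ be the unique $\sigma$-linear endomorphism of $D$ compatible with $\sigma_{\mathbb{T}_{k_p}}$ and satisfying $\varphi(f) := Uf$ and $\varphi(e) := pU e$. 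A direct check gives $N\varphi = p\varphi N$ and the remaining commutation relations of Definition \ref{2dimMon}(c). For the filtration, set $F^0 := D\otimes K_p$, $F^1 = \cdots = F^{k-1} := \mathbb{T}_{K_p}(e - \mathcal{L} f)$, $F^k := 0$. Since $N_{K_p}(D\otimes K_p) = \mathbb{T}_{K_p} f$, the identity $e = (e-\mathcal{L} f) + \mathcal{L} f$ verifies conditions (a) and (b) of Definition \ref{2dimMon}, exhibiting $D_{U,\mathcal{L}}$ as a two-dimensional monodromy $\mathbb{T}_p$-module over $K_p$.

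The invariants of the construction are then read off immediately. Since $f = N(e)$ and $\varphi(f) = Uf$, the definition of $U_D$ gives $U_{D_{U,\mathcal{L}}} = U$. For any $x = x_0 e \in D^{\lambda+1} \otimes K_p$, one has $x - \mathcal{L}\cdot N_{K_p}(x) = x_0(e - \mathcal{L} f) \in F^{k-1}$, so $\mathcal{L}_{D_{U,\mathcal{L}}} = \mathcal{L}$ by the uniqueness clause in Definition \ref{L-FM}. For admissibility, the total Hodge number of $D\otimes K_p$ equals $k-1$ and the total Newton slope equals $\lambda + (\lambda+1) = 2\lambda + 1$, so the global equality required for admissibility forces $\lambda = (k-2)/2$. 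The only non-trivial sub-$(\varphi,N)$-module is $D^\lambda = \ker N$, which inherits the trivial filtration and has Hodge slope $0$ and Newton slope $\lambda$; weak admissibility on this subobject reads $\lambda \geq 0$, which is automatic once $\lambda = m$. This yields the iff statement.

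For the uniqueness, let $D$ be any two-dimensional monodromy $\mathbb{T}_p$-module satisfying $U_D = U$ and $\mathcal{L}_D = \mathcal{L}$. By the slope decomposition \eqref{Lemma slope decomposition}, choose a $\mathbb{T}_{k_p}$-basis $e' \in D^{\lambda+1}$ and set $f' := N(e') \in D^\lambda$. Since the slope decomposition is $\varphi$-stable by construction, $\varphi(e') \in \mathbb{T}_{k_p} e'$; writing $\varphi(e') = A e'$, the relation $N\varphi(e') = p\varphi N(e') = pU_D f'$ forces $A = pU$, and $\varphi(f') = U f'$ by the very definition of $U_D$. The filtration condition $e' - \mathcal{L}_D f' \in F^{k-1}$ together with $F^{k-1} \oplus \mathbb{T}_{K_p} f' = D \otimes K_p$ identifies $F^{k-1} = \mathbb{T}_{K_p}(e' - \mathcal{L} f')$. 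Consequently the $\mathbb{T}_{k_p}$-linear map $D_{U,\mathcal{L}} \to D$ sending $e \mapsto e'$ and $f \mapsto f'$ is an isomorphism of two-dimensional monodromy $\mathbb{T}_p$-modules.

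The main obstacle is the uniqueness step, which rests on three structural facts about an arbitrary $D$: that the slope decomposition $D = D^\lambda \oplus D^{\lambda+1}$ is stable under $\varphi$; that each summand is free of rank one over $\mathbb{T}_{k_p}$; and that $N$ restricts to an isomorphism $D^{\lambda+1} \xrightarrow{\sim} D^\lambda$. All three are packaged in \eqref{Lemma slope decomposition}, so once these are granted the remaining work is essentially bookkeeping, provided one keeps careful track of the $\sigma$-semilinearity in relations such as $\varphi(Te) = \sigma_{\mathbb{T}_{k_p}}(T)\varphi(e)$ when matching coefficients across the candidate isomorphism.
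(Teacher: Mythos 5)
Your construction is the paper's own, merely written in basis notation: your $e$, $f$ correspond to $(0,1)$, $(1,0)$ in the authors' $\mathbb{T}_{k_p}\oplus\mathbb{T}_{k_p}$, and the filtration, Frobenius and monodromy match exactly. The uniqueness argument likewise mirrors the paper's: pick a $\mathbb{T}_{k_p}$-generator $e'$ of $D^{\lambda+1}$, set $f'=N(e')$, and check the map $e\mapsto e'$, $f\mapsto f'$ respects $N$, $\varphi$ and the filtration, using the slope decomposition \eqref{Lemma slope decomposition}, the defining property of $U_D$, condition (c), and Definition \ref{L-FM}. The one thing you add beyond the paper's sketch is the discussion of when $D_{U,\mathcal{L}}$ is admissible (comparing Hodge and Newton numbers and checking the unique $(\varphi,N)$-stable subobject $\ker N$), which the authors state in the proposition but do not argue in their proof; that part would want a little more care over $\mathbb{T}_{k_p}$-subobjects when $\mathbb{T}_p$ is not a field, but the idea is the right one.
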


This will be useful for our purposes in §\ref{S42}. As we were not able to
find an explicit proof of this fact in the literature, let us sketch the
details.

\begin{proof}
Fix $U\in \mathbb{T}_{k_{p}}$ and ${\mathcal{L}}\in \mathbb{T}_{K_{p}}$ as
in the statement and define 
\begin{equation*}
D_{U,{\mathcal{L}}}:=\mathbb{T}_{k_{p}}\oplus \mathbb{T}_{k_{p}}
\end{equation*}
endowed with:

\begin{itemize}
\item a filtration $D_{U,{\mathcal{L}}}\otimes K_{p}=F^{0}\supsetneq
F^{1}=...=F^{k-1}\supsetneq F^{k}=0$, where for all $1\leq j\leq k-1$, 
\begin{equation*}
F^{j}=\left\{ (-\mathcal{L}x,x):x\in \mathbb{T}_{K_{p}}\right\} ;
\end{equation*}

\item a Frobenius operator $\varphi _{U,{\mathcal{L}}}$ given by the rule 
\begin{equation*}
\varphi _{U,{\mathcal{L}}}(x,y):=(\sigma _{\mathbb{T}_{k_{p}}}(x)U,p\,\sigma
_{\mathbb{T}_{k_{p}}}(y)U);
\end{equation*}

\item a monodromy operator $N_{U,{\mathcal{L}}}$ defined by the rule 
\begin{equation*}
N_{U,{\mathcal{L}}}(x,y)=(y,0)\text{.}
\end{equation*}
\end{itemize}

One immediately checks that $D_{U,{\mathcal{L}}}$ is a two dimensional
monodromy $\mathbb{T}_{p}$-module over $K_{p}$, satisfying conditions (a),
(b), (c) as required. It also follows from the definitions that ${\mathcal{L}
}_{D_{U,{\mathcal{L}}}}={\mathcal{L}}$ and $U_{D_{U,{\mathcal{L}}}}=U$.

In order to prove the converse, let now $D=(D,\varphi ,N,F^{\cdot })$ be any
two dimensional monodromy $\mathbb{T}_p$-module over $K_p$, say of slope $%
\lambda$, such that $U_D=U$ and ${\mathcal{L}}_D={\mathcal{L}}$. As in the
proof of Lemma \ref{Lemma endomorphisms of a monodromy module}, we can write 
$D=D^{\lambda}\oplus D^{\lambda+1}=\mathbb{T}_{k_p} N(e) \oplus \mathbb{T}
_{k_p} e$ and this allows us to fix the isomorphism of $\mathbb{T}_{k_p}$
-modules $\mu: D\simeq D_{U,{\mathcal{L}}}=\mathbb{T}_{k_p}\oplus \mathbb{T}
_{k_p}$ given by $\mu(e) = (0,1)$, $\mu(N(e))=(1,0)$.

Let us show that $\mu$ is also an isomorphism of monodromy $\mathbb{T}_p$
-modules over $K_p$. It is obvious from the construction that $\mu $
intertwines the action of $N$. It also follows immediately from Definition %
\ref{L-FM} and the equality ${\mathcal{L}}_D={\mathcal{L}}$ that $\mu$
preserves the filtration. Finally, $\mu$ commutes with $\varphi$ thanks to
the defining property of $U$, condition (c) of Definition \ref{2dimMon} and
the fact that $N_{|D^{\lambda+1}}: D^{\lambda+1} {\rightarrow } D^{\lambda}$
is an isomorphism.
\end{proof}

Fix an algebraic closure $\qbar$ of $\mathbb{Q}$. Assume $K_p$ is algebraic
over $\mathbb{Q}_p$ and choose an algebraic closure $\qbar_p$ of $\mathbb{Q}
_p$ containing $K_p$. Choose also a prime ideal $\bar \wp$ of $\qbar$ over $%
p $, which we may use to fix an embedding of $G_{\mathbb{Q}_p}:= \func{ Gal\,%
}(\qbar_p/\mathbb{Q}_p)$ into $G_{\mathbb{Q}}:=\func{Gal\,}(\qbar
/ \mathbb{Q})$.

For a $p$-adic representation $V$ of $G_{K_p}$ over $\mathbb{Q}_p$ one
defines $D_{st}(V):= (V\otimes B_{st})^{G_{K_p}}$, where $B_{st}$ is
Fontaine's ring defined in \cite{Fo} and from which $D_{st}(V)$ inherits the
structure of a filtered $(\varphi,N)$-module over $K_p$. A $p$-adic
representation $V$ of $G_{K_p}$ is called \emph{semistable} if the canonical
monomorphism $D_{st}(V)\otimes_{k_p} B_{st}{\rightarrow } V\otimes_{\mathbb{%
\ Q }_p} B_{st}$ is an isomorphism. A filtered $(\varphi,N)$-module $D$ over 
$K_p $ is called \emph{admissible} if $D\simeq D_{st}(V)$ for some
semistable representation $V$. The functor $D_{st}$ establishes an
equivalence of categories between that of semistable continuous
representations of $G_{K_p}$ over $\mathbb{Q}_p$ and admissible filtered $%
(\varphi,N)$-modules $D$ over $K_p$.

Let $\mathbb{T}=\mathbb{T}^{p-new}_{\Gamma_0(pN^+)}\otimes \mathbb{Q}$ and
put $\mathbb{T}_p:= \mathbb{T}\otimes \mathbb{Q}_p$. As recalled in the
introduction, let $V_p:=H_p( \mathcal{M}_n)^{p-new}$ denote the $p$-new
quotient of the $p$ -adic ètale realization of the motive $\mathcal{M}_{n}$
attached to the space of $p$-new cusp forms of weight $k$ with respect to $%
\Gamma_0(pN^+)$. Let us regard $V_p$ as a representation of $\func{Gal\,}( %
\qbar_p/ \mathbb{Q}_p)$, by restricting the action of $G_{\mathbb{Q}}$ to
the decomposition subgroup of the fixed prime $\bar \wp$ above. As it is
well-known to the experts, $V_p$ is semistable (cf.\,\cite{C} and \cite{CI}
). Crucial for this is the fact that the Shimura curve $X_0^{N^-}(pN^+)$ has
semistable reduction at $p$.

The admissible filtered $(\varphi,N)$-module $\mathbf{D}^{FM}:=D_{st}(V_p)$
attached by Fontaine and Mazur to $V_p$ is in a natural way a
two-dimensional monodromy $\mathbb{T}_p$-module over $\mathbb{Q}_p$ in the
sense of Definition \ref{2dimMon}, for which $\varphi=U_p$ is the usual
Hecke operator at $p$ and the slope is $m$; cf.\,again \cite{C} and \cite{CI}%
. Let $\mathcal{L} ^{FM} := \mathcal{L}_{\mathbf{D}^{FM}}\in\mathbb{T}_p$.

Recall from the previous section the complete field extension $K_p/\mathbb{Q}
_p$ and its maximal unramified subfield $k_p$. Write $\mathbf{D}
_{K_p}^{FM}=D_{st}({V_p}_{|G_{K_p}})$, which can also be obtained from $%
\mathbf{D}^{FM}$ by base change to $K_p$ in the sense of Remark \ref%
{basechange}. As it follows from the definitions, ${\ \mathcal{L}}_{\mathbf{D%
}_{K_p}^{FM}}={\mathcal{L}}_{\mathbf{D}^{FM}}={\ \mathcal{L}}^{FM}\in 
\mathbb{T}_p$.

\subsection{A monodromy module arising from $p$-adic integration}

\label{S42}

The aim of this section is to explain how the theory developed above allows
us to construct a monodromy module attached to the space of $p$-new modular
forms $S_{k}( \Gamma _0(pN^+))^{p-new}$ and the invariant ${\mathcal{L}}$
introduced in Definition \ref{Linv}.

As before, let $\mathbb{T}=\mathbb{T}^{p-new}_{\Gamma_0(pN^+)}\otimes 
\mathbb{Q}$ and put $\mathbb{T}_p:=\mathbb{T}\otimes \mathbb{Q}_p$. For any
field extension $L/\mathbb{Q}$, set 
\begin{equation}  \label{defH}
\mathbf{H}(L):=H^1(\Gamma,C_{har}(\mathbb{V}_n(L)));
\end{equation}
recalling the identification $\mathbb{V}_n(L)=\mathbf{V}_n(L)$ of \eqref{Vn}
whenever $L$ splits $\mathcal{B}$, this notation is in consonance with %
\eqref{H}.

Fix a choice of a sign $w_{\infty}\in \{\pm 1\}$. Since $\mathbf{H}( \mathbb{%
\ Q}_p)^{c,\vee ,w_{\infty}}$ is a free module of rank $1$ over $\mathbb{T}%
_p $ by Remark \ref{rankone}, we can fix a generator and identify this way $%
\mathbf{H}(\mathbb{Q}_p)^{c,\vee ,w_{\infty}}\simeq \mathbb{T}_p$. Exactly
as in the proof of Proposition \ref{UL}, we can attach now to $U_p, {\ 
\mathcal{L}} \in \mathbb{T}_p$ a monodromy module 
\begin{equation}  \label{monmod}
\mathbf{D}:=D_{U_p,{\mathcal{L}}} = \mathbf{H}(\mathbb{Q} _p)^{c,\vee
,w_{\infty }}\oplus \mathbf{H}(\mathbb{Q}_p)^{c,\vee ,w_{\infty }}
\end{equation}
endowed with the filtration, Frobenius and monodromy operators described in
loc. cit.

\begin{conjecture}
\label{Conjecture equality of the L-invariants 1} $\mathcal{L}_{\mathbf{D}}={%
\mathcal{L}}_{\mathbf{D}^{FM}}$.
\end{conjecture}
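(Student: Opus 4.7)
\textit{Proof plan.} Both $\mathbf{D}$ and $\mathbf{D}^{FM}$ are two-dimensional monodromy $\mathbb{T}_p$-modules over $\mathbb{Q}_p$ (of slope $m=(k-2)/2$, since $\mathbf{D}^{FM}$ is admissible and so is $\mathbf{D}$ by the construction in \eqref{monmod}), and by Proposition \ref{UL} each of them is determined up to isomorphism by its pair $(U,\mathcal{L})$. Since $\mathbf{D}$ is \emph{by construction} the module $D_{U_p,\mathcal{L}}$, and it is well known (cf.\,\cite{C,CI}) that the Frobenius eigenvalue on the slope-$m$ subspace of $\mathbf{D}^{FM}$ is the Atkin--Lehner/Hecke operator $U_p$, the conjecture is equivalent to the single identity
\begin{equation*}
\mathcal{L}=\mathcal{L}^{FM}\in \mathbb{T}_p.
\end{equation*}
Thus the whole problem reduces to matching two a priori unrelated $\mathbb{T}_p$-valued invariants, one defined spectrally via Fontaine's functor $D_{st}$ and the other defined via the $p$-adic integration pairings of Definition \ref{defpairings}.

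The natural strategy, as the authors suggest in the introduction, is to establish this equality through a $p$-adic Jacquet--Langlands correspondence. Concretely, I would aim to construct a Hecke-equivariant isomorphism
\begin{equation*}
\mathrm{JL}_p\colon \mathbf{H}(\mathbb{Q}_p)^{c,\vee,w_\infty}\;\overset{\sim}{\longrightarrow}\; \mathrm{Symb}_{\Gamma_0(N^+)}\!\bigl(\mathcal{D}_n^0(\mathbb{P}^1(\mathbb{Q}_p))^b\bigr)^{pN^-\text{-new},c,\vee,w_\infty}
\end{equation*}
between the quaternionic side (identified, via Theorem \ref{2parts}, with $H^1(\Gamma,\mathcal{D}_n^0(\mathbb{P}^1(\mathbb{Q}_p))^b)^{c,\vee,w_\infty}$) and the analogous module of overconvergent modular symbols for $\mathcal{B}'\simeq \mathrm{M}_2(\mathbb{Q})$ of Jacquet--Langlands level $pN^+N^-$. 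The crucial observation is that the integration pairings $\Psi^{\log}$ and $\Psi^{\mathrm{ord}}$ are defined purely in terms of the distribution module $\mathcal{D}_n^0(\mathbb{P}^1(\mathbb{Q}_p))^b$, which is insensitive to the choice of $\mathcal{B}$: only the group $\Gamma$ differs. Provided $\mathrm{JL}_p$ is Hecke-equivariant and intertwines the actions of $\Gamma$ and $\Gamma_0(N^+)$ on distributions in the expected way, it automatically carries the Darmon--Orton--Greenberg style $\mathcal{L}$-invariant of the left-hand side to that of the right-hand side.

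Once the problem is transferred to the split case, one invokes the chain of comparison theorems already cited in the introduction: Stevens' overconvergent modular symbols compute $\mathbf{D}^{FM}$ as a filtered $(\varphi,N)$-module via the Coleman--Iovita machinery, and the resulting Darmon $\mathcal{L}$-invariant $\mathcal{L}^D$ is shown in \cite{IS}, \cite{BDI} and \cite{Br} to coincide with $\mathcal{L}^{FM}$. Since our $\mathcal{L}$ is, tautologically on the right-hand side of $\mathrm{JL}_p$, given by the same ratio of $\Psi^{\log}$ and $\Psi^{\mathrm{ord}}$ that defines $\mathcal{L}^D$, the desired equality $\mathcal{L}=\mathcal{L}^{FM}$ follows.

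The principal obstacle is clearly the construction of $\mathrm{JL}_p$ in the generality required. For $\mathcal{B}\simeq \mathrm{M}_2(\mathbb{Q})$ there is essentially nothing to prove beyond the original Amice--V\'elu--Vishik / Stevens comparison, for which our Theorem \ref{2parts} is indeed the quaternionic avatar (cf.\,\S\ref{A2}). For \emph{division} indefinite $\mathcal{B}$ with $k>2$, however, one needs a genuinely new $p$-adic Jacquet--Langlands statement: the distribution-valued cohomology $H^1(\Gamma,\mathcal{D}_n^0(\mathbb{P}^1(\mathbb{Q}_p))^b)^{p\text{-new},c}$ must be identified, compatibly with Hecke actions and $p$-adic integration, with an overconvergent automorphic object on a totally different arithmetic quotient. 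This is precisely the aim of the forthcoming work \cite{Se}. Any shortcut, such as comparing the two invariants character-by-character via a Gross--Zagier type formula, seems unable to capture the non-classical nature of the $\mathcal{L}$-invariant and thus appears insufficient for the conjecture.
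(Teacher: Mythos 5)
The statement you were asked to prove is labeled a \emph{conjecture} in the paper, and the paper itself offers no proof; what you have written is therefore not competing with a proof in the paper but with a heuristic discussion, which you have essentially recapitulated. Your reduction to the single identity $\mathcal{L}=\mathcal{L}^{FM}$ via Proposition \ref{UL} together with $U_{\mathbf{D}}=U_{\mathbf{D}^{FM}}=U_p$ is exactly the observation the authors make immediately after stating the conjecture, and the route you propose --- a $p$-adic Jacquet--Langlands transfer of the quaternionic integration theory to Stevens-style overconvergent modular symbols, followed by the Coleman--Iovita / Breuil / Iovita--Spiess comparison theorems --- is precisely the strategy the introduction attributes to the forthcoming work \cite{Se}. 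You have also correctly identified the genuine obstruction, namely the absence (at the time of writing) of the required $\mathrm{JL}_p$ for division $\mathcal{B}$ and $k>2$, and you explicitly concede that the argument halts there. So this is a sound, well-aligned roadmap rather than a proof, and nothing more could have been expected since the statement is genuinely open. One small caution: admissibility of $\mathbf{D}$ is not ``by the construction in \eqref{monmod}''; by Proposition \ref{UL} it is equivalent to the slope being $m=(k-2)/2$, which holds because $U_p$ acts as $a_p(f)=\pm p^m$ on each $f$-eigencomponent --- an arithmetic fact about the Hecke eigenvalues, not a formal consequence of the definition of $D_{U,\mathcal{L}}$.
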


In the above conjecture, note that the definition of both monodromy modules
depends on the choice of a branch of the $p$-adic logarithm. We assume that
the same choice has been done for both $\mathbf{D}$ and $\mathbf{D}^{FM}$.

In view of Proposition \ref{UL}, Conjecture \ref{Conjecture equality of the
L-invariants 1} is equivalent to saying that there is an isomorphism $%
\mathbf{D}\simeq \mathbf{D}^{FM}$ of two-dimensional monodromy $\mathbb{T}
_p $-modules over $\mathbb{Q}_p$ (as $U_{\mathbf{D}} = U_{\mathbf{D}^{FM}}$).

Let $\mathbf{D}_{K_p} = (\mathbf{H} (k_p)^{c,\vee ,w_{\infty}}\oplus \mathbf{%
\ H}(k_p)^{c,\vee ,w_{\infty }},\varphi \otimes \sigma_{k_p/\mathbb{Q}
_p},N\otimes k_p,F^{\cdot }\otimes K_p )$ denote the base change to $K_p$ of 
$\mathbf{D}$ in the sense of Remark \ref{basechange}.

Let 
\begin{equation}
\Psi :=-\Psi ^{\log }\oplus \Psi ^{\func{ord}}:H_1(\Gamma ,\func{Div}^0( 
\mathcal{H}_p)(k_p)\otimes \mathbf{P}_n(K_p)) \longrightarrow \mathbf{H}
(K_p)^{\vee}\oplus \mathbf{H}(K_p)^{\vee}
\end{equation}
and set 
\begin{equation}  \label{PhiOnto}
\Phi :=-\Phi ^{\log }\oplus \Phi ^{\limfunc{ord}}:H_1(\Gamma ,\limfunc{Div}
\nolimits^0( \mathcal{H}_p)(k_p) \otimes \mathbf{P}_n(K_p)) \rightarrow 
\mathbf{D}\otimes K_p
\end{equation}
for the natural composition of the above map(s) onto $\mathbf{H}
(K_p)^{c,\vee,w_{\infty }}$.

By definition of $\Phi$, the free $\mathbb{T}_{K_p}$-submodule of rank one 
\begin{equation*}
F^1=F^{m}=F^{k-1}:=\left\{ ( -\mathcal{L}x,x) :x\in \mathbf{H}(K_p)^{c,\vee
,w_{\infty }}\right\}
\end{equation*}
of $\mathbf{D}\otimes K_p$ is $F^m=\func{Im}( \Phi \circ \partial_2)$.

As it will be useful for our purposes later in the construction of Darmon
cycles, let us recall at this point that, thanks to Lemma \ref{Lemma
H^1(gamma,V_n)=0}, there is a natural isomorphism 
\begin{equation}  \label{id}
H_1( \Gamma ,\func{Div}(\mathcal{H}_p)\otimes \mathbf{P}_n(K_p))\simeq\frac{
H_1( \Gamma ,\func{Div}^0( \mathcal{H}_p) \otimes \mathbf{P}_n(K_p)) }{\func{
Im}\partial _{2}}
\end{equation}
which allows us to identify both spaces.

\begin{definition}
\label{AJ1} The \emph{$p$-adic Abel-Jacobi maps} are the morphisms 
\begin{equation*}
\Psi ^{AJ}:H_1( \Gamma ,\func{Div}(\mathcal{H}_p)\otimes \mathbf{P}_n(K_p))
\longrightarrow \frac{\mathbf{H}(K_p)^{\vee}\oplus \mathbf{H}(K_p)^{\vee}}{ 
\func{Im}\Psi \circ \partial _{2}}
\end{equation*}
and 
\begin{equation}  \label{AJ2}
\Phi ^{AJ}=\mathrm{pr}_c\circ \Psi ^{AJ}:H_1( \Gamma ,\func{Div}(\mathcal{H}
_p)\otimes \mathbf{P}_n(K_p)) \longrightarrow \mathbf{D}\otimes K_p/F^{m} 
\text{.}
\end{equation}
induced respectively by $\Psi$ and $\Phi$, respectively, together with the
isomorphism \eqref{id}.
\end{definition}

\subsection{An Eichler-Shimura construction}

\label{Subsubsection ES-construction}

Let $\mathbb{T}$ be a finite dimensional semisimple commutative algebra over 
$\mathbb{Q}$. For any algebraic extension $L/\mathbb{Q}$, set $\mathcal{X}_{ 
\mathbb{T}}(L):=\Hom_{\mathbb{Q}\text{-}\func{alg}}(\mathbb{T},L)$. By an $L$
-valued system of eigenvalues we shall mean an element $\lambda \in \mathcal{%
\ X}_{\mathbb{T}}(L)$.

Let $H$ be a $\mathbb{Q}$-vector space endowed with a linear action of $%
\mathbb{T}$. Given $\lambda \in \mathcal{X}_{\mathbb{T}}(L)$, a $\lambda $
-eigenvector in $H$ is a non-zero element $f\in H\otimes _{\mathbb{Q}}L$
such that $T\cdot f=\lambda (T)f$ for all $T\in \mathbb{T}$; write $%
H_{\lambda}(L)$ for the subspace of $H\otimes _{\mathbb{Q}}L$ they span.
When $H_{\lambda}(L)\ne 0$, we say that $\lambda $ \emph{occurs} in $%
H\otimes _{ \mathbb{Q}}L$. If $\mathbb{T}\subset \func{End}_{\mathbb{Q}}(H)$
, all $\lambda \in\mathcal{X}_{\mathbb{T}}(L)$ occur.

The Galois group $G_{\mathbb{Q}}$ acts on $\mathcal{X}_{\mathbb{T}}( 
\overline{\mathbb{Q}})$ by composition. Given $\lambda \in \mathcal{X}_{ 
\mathbb{T}}(\overline{\mathbb{Q}})$, we write $[\lambda ]$ for the orbit of $%
\lambda $ under this action. Note that $\ker(\lambda _{1})=\ker(\lambda
_{2}) $ if and only if $\left[ \lambda _{1}\right] =\left[ \lambda _{2}%
\right]$; put $I_{[\lambda]}:=\ker(\lambda)\subset \mathbb{T}$.

Set $L_{\lambda }:=\lambda ( \mathbb{T})$ so that $\lambda\in\mathcal{X}_{ 
\mathbb{T}}(L_{\lambda })$, and $L_{[\lambda
]}=\tprod\nolimits_{\lambda^{\prime}\in \left[ \lambda \right]
}L_{\lambda^{\prime}}\subset \overline{\mathbb{Q}}$; $L_{[\lambda ]}/ 
\mathbb{Q}$ is a Galois extension. Set $H_{[\lambda ]}(L_{[\lambda
]}):=\oplus_{\lambda^{\prime}\in [\lambda]}
H_{\lambda^{\prime}}(L_{[\lambda]})$; an easy descent argument shows that
there exists a $\mathbb{T}$-submodule $H_{\left[ \lambda \right]}\subset H$
over $\mathbb{Q}$ such that $H_{\left[ \lambda \right] }(L_{\left[ \lambda %
\right] })=H_{\left[ \lambda \right]} \otimes_{\mathbb{Q}} L_{\left[ \lambda %
\right]}$.

Given $\lambda \in \mathcal{X}_{\mathbb{T}}(\qbar)$, let $\iota :H_{[\lambda
]}\subset H$ be the natural inclusion and let $(H^{\vee })^{\lambda
}=H^{\vee }/I_{[\lambda ]}\cdot H^{\vee }$ denote the maximal quotient of $%
H^{\vee }$ on which $\mathbb{T}$ acts through $\lambda $. Then there is a
canonical commutative diagram of $\mathbb{T}$ -modules with exact rows 
\begin{equation}
\begin{array}{ccccccccc}
0 & \rightarrow & I_{[\lambda ]}(H^{\vee }) & \rightarrow & H^{\vee } & 
\rightarrow & (H^{\vee })^{\lambda } & \rightarrow & 0 \\ 
&  &  &  & \parallel &  & \downarrow \simeq &  &  \\ 
&  &  &  & H^{\vee } & \overset{\iota ^{\vee }}{\rightarrow } & H_{[\lambda
]}^{\vee } & \rightarrow & 0\text{.}%
\end{array}
\label{diagram}
\end{equation}

Let now $\mathbb{T}=\mathbb{T}_{\Gamma _{0}(pN^{+})}^{p-new}\otimes \mathbb{%
Q }$ and let $\mathbf{H}^{c,w_{\infty }}=H^{1}(\Gamma ,\mathcal{C}_{har}( 
\mathbb{V}_{n}(\mathbb{Q})))^{c,w_{\infty }}$ be the module introduced in %
\eqref{defH}; note that $\func{End}(\mathbf{H}^{c,w_{\infty }})=\mathbb{T}$
by Remark \ref{rankone}. By Remark \ref{Remark on the rank 3}, Lemma \ref%
{Lemma harmonic cocycles/cusp forms}, the Jacquet-Langlands correspondence
and the $q$-expansion principle, $\dim _{L_{\lambda }}(\mathbf{H}_{\lambda
}^{c,w_{\infty }}\left( L_{\lambda }\right) )=1$ for all $\lambda \in 
\mathcal{X}_{\mathbb{T}}(\qbar)$.

Given a non-zero eigenvector $f$, write $\lambda _{f}$ for the corresponding
system of eigenvalues and put $L_{f}:=L_{\lambda _{f}}$, $I_{[f]}=I_{\left[
\lambda_f\right] }$ and $\mathbf{H}_{\left[ f\right] }^{c,w_{\infty }}= 
\mathbf{H}_{\left[ \lambda_f\right] }^{c,w_{\infty}}$.

Since the category of admissible filtered Frobenius modules over $\mathbb{Q}
_p$ is an abelian category and the elements of $I_{[f]}$ act on $\mathbf{D}$
, we can introduce the module $\mathbf{D}_{[f]}\in \mathrm{MF}_{\mathbb{Q}
_p}(\varphi ,N)$ as the one sitting in the exact sequence 
\begin{equation*}
0\rightarrow I_{[f]}\mathbf{D}\rightarrow \mathbf{D}\overset{\lambda _{f}}{
\rightarrow }\mathbf{D}_{[f]}\rightarrow 0\text{.}
\end{equation*}

Tensoring with $\mathbb{Q}_p$ over $\mathbb{Q}$ yields an exact sequence 
\begin{equation*}
0\rightarrow I_{[\lambda],p}\rightarrow \mathbb{T}_p\overset{\lambda_p}{
\rightarrow }L_{\lambda,p}\rightarrow 0.
\end{equation*}

Since $\mathbb{T}_{p}\subset \func{End}_{\func{MF}_{\mathbb{Q}
_{p}}^{ad}(\varphi ,N)}(\mathbf{D})$, we have $\mathbf{D}_{[f]}=\mathbf{D}
/I_{[\lambda ],p}\mathbf{D}$ and it follows that $\mathbf{D}_{[f]}$ is
canonically a two-dimensional monodromy $L_{f,p}$-module over $K_{p}$. Its ${%
\ \mathcal{L}}$-invariant is 
\begin{equation}
\mathcal{L}_{\left[ f\right] }:=\lambda _{f,p}(\mathcal{L})\in L_{f,p}
\end{equation}
and its $U$-invariant is $\lambda _{f,p}(U_{p})=a_{p}(f)=\pm p^{m}$. In the
notation of Proposition \ref{UL}, 
\begin{equation}
\mathbf{D}_{\left[ f\right] }=\mathbf{D}_{a_{p}(f),{\mathcal{L}}_{\left[ f %
\right] }}\text{.}
\end{equation}

Explicitly, $\mathbf{D}_{[f]}$ can be described as the filtered Frobenius
monodromy module over $\mathbb{Q}_p$ whose underlying vector space is 
\begin{equation}
\mathbf{D}_{[f]}=(\mathbf{H}^{c,w_{\infty },\vee }(\mathbb{Q}
_p))^{\lambda_f} \oplus (\mathbf{H}^{c,w_{\infty },\vee }(\mathbb{Q}
_p))^{\lambda_f} \simeq \mathbf{H}_{\left[ f\right] }^{c,w_{\infty },\vee }( 
\mathbb{Q}_p) \oplus \mathbf{H}_{\left[ f\right] }^{c,w_{\infty },\vee }( 
\mathbb{Q}_p),
\end{equation}
where the latter isomorphism arises from \eqref{diagram}. The filtration $%
F_{[f]}^{\cdot}$ is given as in Definition \ref{2dimMon}, where 
\begin{equation}
F_{[f]}^{m}=\left\{ (-\mathcal{L}_{\left[ f\right] }x,x):x\in \mathbf{H}_{ %
\left[ f\right] }^{c,w_{\infty },\vee }(\mathbb{Q}_p)\right\}.
\end{equation}

Fix a complete field extension $K_p/\mathbb{Q}_p$ as in the previous
sections and let $\mathbf{D}_{[f],K_p}$ denote the base change to $K_p$ of $%
\mathbf{D}_{[f]}$ in the sense of Remark \ref{basechange}. As in %
\eqref{PhiOnto} and \eqref{AJ2}, we can introduce the map 
\begin{equation}
\Phi _{\left[ f\right] }:H_{1}(\Gamma ,\func{Div}^{0}(\mathcal{H}_p)\otimes 
\mathbf{P}_{n})\overset{\Phi }{\longrightarrow }\mathbf{D}\otimes K_p\overset%
{\lambda _{f}}{\twoheadrightarrow }\mathbf{D}_{\left[ f\right] }\otimes
K_p\quad
\end{equation}
and the Abel-Jacobi map 
\begin{equation}
\Phi _{\left[ f\right] }^{AJ}:H_{1}(\Gamma ,\func{Div}(\mathcal{H}_p)\otimes 
\mathbf{P}_{n})\overset{\Phi ^{AJ}}{\rightarrow }\mathbf{D}\otimes K_p/F^{m} 
\overset{\lambda _{f}}{\twoheadrightarrow }\mathbf{D}_{\left[ f\right]
,K_p}/F_{\left[ f\right] }^{m}.
\end{equation}

Of course the monodromy module $\mathbf{D}_{\left[ f\right] }$ canonically
decomposes according to $L_{f,p}=\tbigoplus\nolimits_{\mathfrak{p}\mid
p}L_{f,\mathfrak{p}}$, where $L_{f,\mathfrak{p}}$ denotes the completion of $%
L_{f}$ at the prime $\mathfrak{p}$ above $p$: 
\begin{equation*}
\mathbf{D}_{\left[ f\right] }=\tbigoplus\nolimits_{\mathfrak{p}\mid p} 
\mathbf{D}_{\left[ f\right] ,\mathfrak{p}}\text{.}
\end{equation*}
In the notation of Proposition \ref{UL}, $\mathbf{D}_{\left[ f\right] , 
\mathfrak{p}}=\mathbf{D}_{a_{p}(f),{\mathcal{L}}_{\left[ f\right] ,\mathfrak{%
\ p}}}$, where ${\mathcal{L}}_{\left[ f\right] ,\mathfrak{p}}$ denotes the $%
\mathfrak{p}$-component of $\mathcal{L}_{\left[ f\right] }$. We can further
consider $\Phi _{\left[ f\right] ,\mathfrak{p}}$ as well as $\Phi _{\left[ f %
\right] ,\mathfrak{p}}^{AJ}$.

\section{Darmon cycles}

\label{S3}

\subsection{Construction of Darmon homology classes}

\label{S31}

The aim of this section is to introduce what we call \textit{Darmon cycles}, which should
be regarded as analogues of the classical Heegner cycles attached to
imaginary quadratic fields and weight $k\geq 4$ modular forms by Nekov\'a\v{r} (c.f.\,\cite{Ne}, \cite{IS}) and of Stark-Heegner points (also called Darmon points in \cite{LRV2}) attached to real quadratic
fields and weight $2$ modular forms (cf.\,\cite{Dar}, \cite{Gr}, \cite{LRV}, 
\cite{LRV2}).

Let $p$ be a prime and let $N$ be a positive integer such that $p\mid N$, $%
p^2\nmid N$. Let $K/\mathbb{Q} $ be a real quadratic field in which $p$
remains inert. Assume for simplicity that the discriminant $D_{K}$ of $K$ is
prime to $N$. This induces a factorization of $N$ as $N=pN^+N^-$, where $%
(N^+,N^-)=1$ and all prime factors of $N^+$ (respectively $N^-$) split
(resp. remain inert) in $K$.

Crucial for our construction is the following \emph{Heegner hypothesis} (see
also our general discussion in the introduction), which we assume for the
rest of this section.

\vspace{0.3cm}

\textbf{Assumption.} \emph{$N^-$ is the square-free product of an even
number of primes.}

\bigskip

In consonance with the notations introduced in section \ref{S1}, let $K_p$
denote the completion of $K$ at $p$, a quadratic unramified extension of $%
\mathbb{Q}_p$. Since this field shall be fixed during the whole section and
the maximal unramified subextension of $K_p$ is $k_p=K_p$ itself, we shall
simply write $\mathcal{H}_p$, $\func{Div}(\mathcal{H}_p)$ and $\mathbf{P}_n$
instead of $\mathcal{H}_p(K_p)$, $\func{Div}(\mathcal{H}_p)(K_p)$ and $%
\mathbf{P}_n(K_p)$, respectively.

Let $\mathcal{B}$ be the indefinite quaternion algebra of discriminant $N^-$
over $\mathbb{Q}$, $\mathcal{R}$ be a $\mathbb{Z}[1/p]$-Eichler order of
level $N^+$ in $\mathcal{B}$ and $\Gamma$ be the subgroup of $\mathcal{R}
^{\times }$ of elements of reduced norm $1$.

As in §\ref{S0}, fix an embedding $\mathcal{B}^{\times }\hookrightarrow 
\mathrm{GL}_{2}(\mathbb{Q}_{p})$, which allows us to regard $\Gamma $ as a
subgroup of ${\mathrm{SL}}_{2}(\mathbb{Q}_{p})$. Choose also embeddings 
\begin{equation*}
\sigma _{\infty }:K\rightarrow \mathbb{R}\text{ and }\sigma
_{p}:K\rightarrow K_{p}
\end{equation*}%
that we use to regard $K$ as a subfield of both $\mathbb{R}$ and of $K_{p}$.
In particular we have $D_{K}^{-\frac{k-2}{4}}\in K_{p}$ via $\sigma _{p}$.

Let us denote by $\mathrm{Emb}( K,\mathcal{B}) $ the set of all the $\mathbb{%
Q} $-algebra embeddings of $K$ into $\mathcal{B}$. Let $\mathcal{O}\subset K$
be a $\mathbb{Z}[1/p] $-order of conductor $c\geq 1$, $(c, N)=1$, and let $%
\mathrm{Emb}(\mathcal{O}, \mathcal{R})$ be the set of $\mathbb{Z} [ 1/p] $%
-optimal embeddings of $\mathcal{O}$ into $\mathcal{R}$. Attached to an
embedding $\Psi \in \mathrm{Emb}( \mathcal{O}, \mathcal{R})$ there is the
following data:

\begin{itemize}
\item the two fixed points $\tau _{\Psi },\overline{\tau }_{\Psi }\in 
\mathcal{H}_p\cap K$ for the action of $\Psi(K_p^{\times }) $ on $\mathcal{H}
_p\cap K$, labelled in such a way that the action of $K^{\times }$ on the
tangent space at $\tau _{\Psi }$ is given by the character $z\mapsto z/ 
\overline{z}$;

\item the unique vertex $v_{\Psi }\in \mathcal{V}$ which is fixed for the
action of $\Psi(K_p^{\times }) $ on $\mathcal{V}$; we have $v_{\Psi}= 
\mathrm{red}(\tau _{\Psi }) = \mathrm{red}(\overline{\tau }_{\Psi})$;

\item the unique polynomial up to sign $P_{\Psi }$\ in $\mathbf{P}_{2}$
which is fixed by the action of $\Psi \left( K_{p}^{\times }\right) $ on $%
\mathbf{P}_{2}$ and satisfies $\left\langle P_{\Psi },P_{\Psi }\right\rangle
_{\mathbf{P}_{2}}=-D_{K}/4$. We single out one by%
\begin{equation*}
P_{\Psi }:=\limfunc{Tr}(\Psi (\sqrt{D_{K}}/2)\cdot 
\begin{pmatrix}
X & -X^{2} \\ 
1 & -X%
\end{pmatrix}%
)\in \mathbf{P}_{2}\text{;}
\end{equation*}

\item the stabilizer $\Gamma _{\Psi }$ of $\Psi $ in $\Gamma $, that is, 
\begin{equation*}
\Gamma _{\Psi }=\Psi (K^{\times })\cap \Gamma =\Psi (\mathcal{O}_{1}^{\times
})
\end{equation*}%
where $\mathcal{O}_{1}^{\times }:=\{\gamma \in {\mathcal{O}}^{\times },%
\mathrm{n}(\gamma )=1\}$;

\item the generator $\gamma _{\Psi }:=\Psi (u)$ of $\Gamma _{\Psi }/\{\pm
1\}\simeq \mathbb{Z}$, where $u\in \mathcal{O}_{1}^{\times }$ is the unique
generator of $\mathcal{O}_{1}^{\times }/\{\pm 1\}$ such that $\sigma (u)>1$
if $\sigma (\tau _{\Psi })>\sigma (\bar{\tau}_{\Psi })$ and $\sigma (u)<1$
if $\sigma (\tau _{\Psi })<\sigma (\bar{\tau}_{\Psi }).$
\end{itemize}

For each $\tau \in \mathcal{H}_p$, we say that $\tau $ has positive
orientation at $p$ if $\mathrm{red}( \tau ) \in \mathcal{V}^+$. We write $%
\mathcal{H} _p^+$ to denote the set of positive oriented elements in $%
\mathcal{H} _p$. We say that $\Psi \in \mathrm{Emb}({\mathcal{O}}, \mathcal{%
R })$ has positive orientation whenever $v_{\Psi }\in \mathcal{V}^+$, i.e. $%
\tau _{\Psi },\overline{\tau }_{\Psi }\in \mathcal{H}_p^+\cap K$. Put 
\begin{equation*}
\mathrm{Emb}({\mathcal{O}}, \mathcal{R})=\mathrm{Emb}_+({\mathcal{O}}, 
\mathcal{R})\sqcup \mathrm{Emb}_-({\mathcal{O}}, \mathcal{R})
\end{equation*}
with the obvious meaning. The group $\Gamma $ acts on $\mathrm{Emb}({\ 
\mathcal{O}}, \mathcal{R})$ by conjugation, preserving orientations.

As in the previous sections, fix an even integer $k\geq 2$ and let $n=k-2$, $%
m=n/2$. The $\Gamma _{\Psi }$-module $K_{p}\cdot \tau _{\Psi }\otimes
D_{K}^{-\frac{k-2}{4}}P_{\Psi }^{m}\subset \func{Div}(\mathcal{H}%
_{p})\otimes \mathbf{P}_{n}$ is endowed with the trivial $\Gamma _{\Psi }$%
-action (see the computation \eqref{Formula
action of gamma on datas} below). Hence, the choice of the generator $\gamma
_{\Psi }$ for the cyclic group $\Gamma _{\Psi }$ allow us to fix an
identification $K_{p}=H_{1}(\Gamma _{\Psi },K_{p}\cdot \tau _{\Psi }\otimes
D_{K}^{-\frac{k-2}{4}}P_{\Psi }^{m})$. The inclusion $K_{p}\cdot \tau _{\Psi
}\otimes D_{K}^{-\frac{k-2}{4}}P_{\Psi }^{m}\subset \func{Div}(\mathcal{H}%
_{p})\otimes \mathbf{P}_{n}$ then induces the \emph{cycle class map} 
\begin{equation*}
cl_{\Psi }:K_{p}=H_{1}(\Gamma _{\Psi },K_{p}\cdot \tau _{\Psi }\otimes
D_{K}^{-\frac{k-2}{4}}P_{\Psi }^{m})\rightarrow H_{1}(\Gamma ,\func{Div}(%
\mathcal{H}_{p})\otimes \mathbf{P}_{n})\text{.}
\end{equation*}

The group $H_1( \Gamma ,\func{Div}(\mathcal{H}_p)\otimes \mathbf{P}_n) $
should be regarded as a substitute of the local Chow group in our real
quadratic setting. See §\ref{S0} for more on this analogy. With this in mind
we make the following definition.

\begin{definition}
The \emph{Darmon cycle} attached to an embedding $\Psi \in \mathrm{Emb}({\ 
\mathcal{O}}, \mathcal{R})$ is 
\begin{equation*}
y_{\Psi }:=cl_{\Psi }(1) \in H_1( \Gamma ,\func{Div}(\mathcal{H}_p)\otimes 
\mathbf{P}_n) \text{.}
\end{equation*}
\end{definition}

\begin{lemma}
\label{Lemma Darmon cycles} The homology class $y_{\Psi }\in H_1( \Gamma , 
\func{Div}(\mathcal{H}_p)\otimes \mathbf{P}_n)$ does not depend on the
choice of $\Psi$ in its conjugacy class of optimal embeddings for the action
of $\Gamma$.
\end{lemma}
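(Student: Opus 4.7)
The plan is to exploit the standard fact that inner conjugation by any element of $\Gamma $ induces the identity on $H_{*}(\Gamma ,-)$. Fix $\gamma \in \Gamma $ and set $\Psi ':=\gamma \Psi \gamma ^{-1}$. First I will establish the transformation rules for the data associated to $\Psi $: namely $\tau _{\Psi '}=\gamma \tau _{\Psi }$ and $\bar{\tau}_{\Psi '}=\gamma \bar{\tau}_{\Psi }$ (the $K_{p}^{\times }$-character on the tangent space at $\tau _{\Psi }$ is intrinsic to $\Psi $ and preserved by inner conjugation); $P_{\Psi '}=P_{\Psi }\cdot \gamma ^{-1}$, equivalently $\gamma \cdot P_{\Psi }=P_{\Psi '}$ under the left action on $\mathbf{P}_{n}$ (via the $\mathcal{B}^{\times }$-equivariance of the map $\mathcal{B}_{0}\to \mathbf{P}_{2}$ defining $P_{\Psi }$, applied to $\Psi '(\sqrt{D_{K}}/2)=\gamma \Psi (\sqrt{D_{K}}/2)\gamma ^{-1}$); $\Gamma _{\Psi '}=\gamma \Gamma _{\Psi }\gamma ^{-1}$, trivially; and $\gamma _{\Psi '}=\gamma \gamma _{\Psi }\gamma ^{-1}$, which requires checking that the distinguished generator $u\in \mathcal{O}_{1}^{\times }/\{\pm 1\}$ singled out by the archimedean sign condition is the same for $\Psi $ and $\Psi '$.

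A direct consequence of the first two rules is that the $\Gamma _{\Psi }$-submodule
\begin{equation*}
M_{\Psi }:=K_{p}\cdot (\tau _{\Psi }\otimes D_{K}^{-(k-2)/4}P_{\Psi }^{m})\subset \func{Div}(\mathcal{H}_{p})\otimes \mathbf{P}_{n}
\end{equation*}
is carried by $\gamma $ onto $M_{\Psi '}$. Next I will apply the functoriality of $H_{1}$ to the morphism of pairs $(\Gamma _{\Psi },M_{\Psi })\to (\Gamma _{\Psi '},M_{\Psi '})$ induced by conjugation and the module action of $\gamma $; combined with the natural maps into $H_{1}(\Gamma ,\func{Div}(\mathcal{H}_{p})\otimes \mathbf{P}_{n})$, this yields a commutative square whose right vertical arrow is inner conjugation by $\gamma \in \Gamma $ and is hence the identity on $H_{*}(\Gamma ,-)$. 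Under the identifications $K_{p}\cong H_{1}(\Gamma _{\Psi },M_{\Psi })$ and $K_{p}\cong H_{1}(\Gamma _{\Psi '},M_{\Psi '})$ provided by the generators $\gamma _{\Psi }$ and $\gamma _{\Psi '}=\gamma \gamma _{\Psi }\gamma ^{-1}$, the left vertical arrow is the identity on $K_{p}$. Chasing $1\in K_{p}$ around the diagram then gives $y_{\Psi }=cl_{\Psi }(1)=cl_{\Psi '}(1)=y_{\Psi '}$.

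The main obstacle will be the sign verification underlying $\gamma _{\Psi '}=\gamma \gamma _{\Psi }\gamma ^{-1}$: one must show that the archimedean inequality $\sigma _{\infty }(\tau _{\Psi })\gtrless \sigma _{\infty }(\bar{\tau}_{\Psi })$ has the same sign as $\sigma _{\infty }(\tau _{\Psi '})\gtrless \sigma _{\infty }(\bar{\tau}_{\Psi '})$. This is delicate because a generic element of $\mathrm{SL}_{2}(\mathbb{R})$ can reverse the linear order of two real points under its M\"obius action, so the compatibility must be deduced from the interplay between the archimedean and $p$-adic orientations implicit in the definition of the pair $(\tau _{\Psi },\gamma _{\Psi })$. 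Once this is granted, orbit-independence follows immediately from the diagram chase above.
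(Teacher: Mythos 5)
Your plan follows exactly the route the paper takes: one records how the triple $(\tau_{\Psi},P_{\Psi},\gamma_{\Psi})$ transforms under conjugation and then exploits the triviality of inner automorphisms on $H_{*}(\Gamma,-)$; the paper compresses this into the single displayed formula \eqref{Formula action of gamma on datas} and an immediate rewriting of the chain $\tau_{\Psi}\otimes D_K^{-(k-2)/4}P_{\Psi}^{m}\otimes[\gamma_{\Psi}]$.

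Where you differ from the paper is in honesty about the identity $\gamma_{\gamma\Psi\gamma^{-1}}=\gamma\gamma_{\Psi}\gamma^{-1}$. The paper simply asserts it inside \eqref{Formula action of gamma on datas}, while you correctly single it out as the only non-formal step. Your worry is well founded: the labelling of $\tau_{\Psi}$ versus $\overline{\tau}_{\Psi}$ is fixed by the $p$-adic tangent character, whereas the generator $u$ is normalised by the sign of $\sigma_{\infty}(\tau_{\Psi})-\sigma_{\infty}(\overline{\tau}_{\Psi})$, and an arbitrary $\gamma\in\Gamma$ acting by M\"obius on $\mathbb{P}^{1}(\mathbb{R})$ via $\iota_{\infty}$ need not preserve the \emph{linear} order of two real numbers (it only preserves the cyclic order of $\mathbb{P}^{1}(\mathbb{R})$). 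Thus the equality does require an argument; a cleaner characterisation of the normalisation that makes conjugation-invariance transparent is to re-express the archimedean sign condition in terms of the direction in which $\Psi(u)$ translates along the hyperbolic geodesic joining $\sigma_{\infty}(\tau_{\Psi})$ and $\sigma_{\infty}(\overline{\tau}_{\Psi})$ (a conformally invariant datum), and then verify once that this agrees with the paper's linear-order convention together with the $p$-adic labelling. You leave this verification as granted, so your proposal has a gap at precisely this point — but it is the same gap that the paper's own proof leaves implicit, and you have at least located it accurately.
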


\begin{proof}
Let $\gamma \in \Gamma $. The assignation $\Psi \mapsto (\tau _{\Psi
},P_{\Psi },\gamma _{\Psi })$ behaves under conjugation by $\gamma $ as 
\begin{equation}
(\tau _{\gamma \Psi \gamma ^{-1}},P_{\gamma \Psi \gamma ^{-1}},\gamma
_{\gamma \Psi \gamma ^{-1}})=(\gamma \tau _{\Psi },\gamma P_{\Psi }:=P_{\Psi
}\gamma ^{-1},\gamma \gamma _{\Psi }\gamma ^{-1})\text{.}
\label{Formula action of gamma on datas}
\end{equation}%
from what it follows that 
\begin{equation*}
cl_{\Psi }(1)=\tau _{\Psi }\otimes D_{K}^{-\frac{k-2}{4}}P_{\Psi
}^{m}\otimes \lbrack \gamma _{\Psi }]=\gamma \cdot \tau _{\Psi }\otimes
\gamma \cdot D_{K}^{-\frac{k-2}{4}}P_{\Psi }^{m}\otimes \lbrack \gamma
\gamma _{\Psi }\gamma ^{-1}]=cl_{\gamma \Psi \gamma ^{-1}}(1)\in
H_{1}(\Gamma ,\func{Div}(\mathcal{H}_{p})\otimes \mathbf{\ P}_{n}).
\end{equation*}
\end{proof}

As a consequence of Lemma \ref{Lemma Darmon cycles}, there is a well-defined
morphism 
\begin{equation*}
y: \Gamma \backslash \mathrm{Emb}({\mathcal{O}},\mathcal{R})\rightarrow
H_1(\Gamma ,\func{Div}(\mathcal{H}_p)\otimes \mathbf{P}_n)
\end{equation*}
attaching a Darmon cycle $y_{ [\Psi] }:=y_{\Psi }$ to any conjugacy class $[
\Psi ]$ of optimal embeddings. Invoke now the Abel-Jacobi map 
\begin{equation*}
\begin{array}{lll}
H_1( \Gamma ,\func{Div}( \mathcal{H}_p)\otimes \mathbf{P}_n) & \overset{\Phi
^{AJ}}{\rightarrow } & \mathbf{D}\otimes K_p/F^{m}%
\end{array}%
\end{equation*}
introduced in \eqref{AJ2}.

\begin{definition}
\label{defSH} The \emph{Darmon cohomology class} attached to $[\Psi]\in
\Gamma \backslash \mathrm{Emb}({\mathcal{O}},\mathcal{R})$ is $s_{[\Psi]}:=
\Phi ^{AJ}(y_{[\Psi]})\in \mathbf{D}\otimes K_p/F^{m}$.
\end{definition}

\begin{remark}
Sometimes, when one is only interested in the conductor $c=c({\mathcal{O}})$
of the quadratic order ${\mathcal{O}}$ and not on the specific optimal
embedding that is being used in the construction of the cycle, one may
simply write $y_{c}$ and $s_{c}$ instead of $y_{[\Psi ]}$ or $s_{[\Psi ]}$.
\end{remark}

\begin{remark}
This construction can also be formulated from a different (but equivalent)
point of view, which reinforces the analogy with the classical case of
imaginary quadratic fields. Namely, let $\mathcal{H}_p^{{\mathcal{O}}}=\{
\tau \in \mathcal{H}_p: \tau=\tau_{\Psi} \text{ for some } \Psi\in \mathrm{\
Emb}({\mathcal{O}},\mathcal{R})\}$. Note that there is a well-defined action
of $\Gamma$ on $\mathcal{H}_p^{{\mathcal{O}}}$. With this notation, the
above formalism yields a map 
\begin{equation}  \label{y}
d: \Gamma\backslash \mathcal{H}_p^{{\mathcal{O}} } \overset{y}{{\
\longrightarrow }} H_1( \Gamma ,\func{Div}(\mathcal{H}_p)\otimes \mathbf{P}
_n) \overset{\Phi^{AJ}}{{\longrightarrow }} \mathbf{D}\otimes K_p/F^{m}.
\end{equation}
\end{remark}

\begin{remark}
For every prime $\ell\mid pN^+N^-$, let $\omega _{\ell}\in \mathcal{R}%
_0(N^+) $ be an element of reduced norm $\ell$ lying in the normalizator of $%
\Gamma $ . Conjugation by $\omega_{\ell }$ induces an involution $W_{\ell }$
on $\Gamma \backslash \mathrm{Emb}({\mathcal{O}},\mathcal{R})$ given by $%
W_{\ell}(\Psi) = \omega_{\ell }\Psi \omega_{\ell }^{-1}$.

Besides, conjugation by $\omega_{\ell }$ also induces an involution $%
W_{\ell} $ both on $H_1( \Gamma ,\func{Div}(\mathcal{H}_p)\otimes \mathbf{P}%
_n)$ and on $\mathbf{D}\otimes K_p/F^{m}$, as already mentioned in §\ref{S11}%
. It follows as in the proof of Lemma \ref{Lemma Darmon cycles} and the
Hecke equivariance of $\Phi ^{AJ}$ that there are commutative diagrams 
\begin{equation*}
\begin{array}{ccccccc}
d: \Gamma \backslash \mathrm{Emb}({\mathcal{O}},\mathcal{R}) & {\rightarrow}
& H_1( \Gamma ,\func{Div}(\mathcal{H}_p)\otimes \mathbf{P}_n) & \overset{
\Phi^{AJ}}{\rightarrow} & \mathbf{D}\otimes K_p/F^{m} &  &  \\ 
\text{ \ }\downarrow W_{\ell} &  & \text{ \ \ \ \ \ \ \ \ \ \ \ \ }
\downarrow W_{\ell} &  & \text{ \ \ } \downarrow W_{\ell} &  &  \\ 
d: \Gamma \backslash \mathrm{Emb}({\mathcal{O}},\mathcal{R}) & {\rightarrow}
& H_1( \Gamma ,\func{Div}(\mathcal{H}_p)\otimes \mathbf{P}_n) & \overset{
\Phi^{AJ}}{\rightarrow} & \mathbf{D}\otimes K_p/F^{m}\text{.} &  & 
\end{array}%
\end{equation*}
\end{remark}

Recall that an orientation on the Eichler order $\mathcal{R}$ (resp. on the
quadratic order ${\mathcal{O}}$) is the choice, for each $\ell \mid N^+N^-$,
of a ring homomorphism $\mathcal{R}\,{\rightarrow }\,k_{\ell }$ (resp. $%
\mathcal{O}\,{\rightarrow }\,k_{\ell }$), where $k_{\ell }= \mathbb{F}_{\ell
^{2}}$ (resp. $k_{\ell}=\mathbb{F}_{\ell}$) for $\ell \mid N^-$ (resp. $\ell
\mid N^+$).

Fix orientations both on ${\mathcal{O}}$ and on $\mathcal{R}$. An optimal
embedding $\Psi: {\mathcal{O}} {\rightarrow } \mathcal{R}$ is \emph{oriented}
if, for all $\ell \mid N^+ N^-$, $\Psi\otimes k_{\ell}$ commutes with the
chosen local orientations on ${\mathcal{O}}\otimes k_{\ell}$ and $\mathcal{R}
\otimes k_{\ell}$, respectively. Write $\overrightarrow{\mathrm{Emb}}_{+}({\ 
\mathcal{O}},\mathcal{R}) \subset \mathrm{Emb}_+({\mathcal{O}},\mathcal{R})$
for the set of oriented positive optimal embeddings. The action of $\Gamma$
on $\mathrm{Emb}_+({\mathcal{O}},\mathcal{R})$ leaves $\overrightarrow{ 
\mathrm{Emb}}_{+}({\mathcal{O}},\mathcal{R})$ stable and thus induces a
well-defined action on it.

By Eichler's theory of optimal embeddings, $\overrightarrow{\mathrm{Emb}}_+({%
\ \mathcal{O}},\mathcal{R})$ is not empty and the quotient $\Gamma
\backslash \overrightarrow{\mathrm{Emb}}_+({\mathcal{O}},\mathcal{R})$ is
endowed with a free transitive action of the narrow class group ${\mathrm{Pic%
}}({\mathcal{\ O}})$ of the $\mathbb{Z}[\frac{1}p]$-order ${\mathcal{O}}$
(see for example \cite[Lemma 2.5]{BD} for related result in the setting of
imaginary quadratic fields). Denote this action by 
\begin{equation*}
([ \mathfrak{a}] ,[ \Psi ] )\,\mapsto \,[ \mathfrak{a} \star \Psi ], \quad 
\text{ for } [\mathfrak{a}]\in {\mathrm{Pic}}({\mathcal{O}}), \Psi\in 
\overrightarrow{\mathrm{Emb}}_+({\mathcal{O}}, \mathcal{R}).
\end{equation*}

Artin's reciprocity map of global class field theory provides an isomorphism 
\begin{equation*}
\mathrm{rec}:\mathrm{Pic}( \mathcal{O}) \overset{\simeq }{\rightarrow } 
\func{Gal\,}(H_{\mathcal{O}}/K),
\end{equation*}
where $H_{ \mathcal{O}}$ stands for the narrow ring class field attached to
the $\mathbb{Z}$-order ${\mathcal{O}}_0$ of $K$ which is locally maximal at $%
p$ and ${\mathcal{O}}_0[\frac{1}p]=\mathcal{O}$.

In order to state our conjectures it is convenient to introduce the
following linear combinations of Darmon cycles.

\begin{definition}
\label{Definition twisted Abel-Jacobi image of Darmon cycles} Let $\chi: 
\func{Gal\,}(H_{\mathcal{O}}/K)\rightarrow \mathbb{C} ^{\times }$ be a
character. The \emph{Darmon cycle} attached to the character $\chi $ is 
\begin{equation*}
y_{\chi }:=\tsum\nolimits_{\sigma \in \func{Gal\,}(H_{\mathcal{O}}/K)}\chi
^{-1}( \sigma ) y_{[ \mathrm{rec}^{-1}(\sigma) \star \Psi ] }\in H_1( \Gamma
, \func{Div}(\mathcal{H}_p)\otimes \mathbf{P}_n) \otimes K_p(\chi),
\end{equation*}
where $[ \Psi ]$ is any choice of a class of optimal embeddings in $\Gamma
\backslash \overrightarrow{\mathrm{Emb}}_+({\mathcal{O}},\mathcal{R})$ and $%
K_p(\chi)$ is the field generated by the (algebraic) values of $\chi$ over $%
K_p$.

Write 
\begin{equation*}
s_{\chi }:=\Phi ^{AJ}(s_{\chi })\in \mathbf{D}\otimes K_{p}(\chi
)/F^{m}\otimes K_{p}(\chi ).
\end{equation*}
\end{definition}

\subsection{A conjecture on the global rationality of Darmon cycles}

\label{S34}

Keep the notations and hypothesis of §\ref{S31}. As in §\ref{S0} and §\ref%
{S41}, let $V_p:=H_p( \mathcal{M}_n)^{p-new}$, which we regard this time as
a semistable continuous representation of $G_{K_p}$, by restricting the
action of $G_K\subset G_{\mathbb{Q}}$ to the decomposition subgroup of a
prime $\bar \wp$ of $\qbar$ over $p$.

In this section we show how Conjecture \ref{Conjecture equality of the
L-invariants 1} allows us to attach to each Darmon cycle $y_{\Psi}$ a class $%
s_{\Psi}$ in the group $H_{st}^1(K_p,V_p)$ of local semistable cohomology
classes. Cf.\,\eqref{Hst}, or rather \cite{Ne2}, for the definition of this
group.

In \cite{BK}, Bloch and Kato introduced an exponential map which, in the
case which concerns us here, induces an isomorphism 
\begin{equation}  \label{Conjectures Lemma 2}
\exp :\frac{\mathbf{D}^{FM}\otimes K_p}{\limfunc{Fil}^{m}(\mathbf{D}
^{FM}\otimes K_p)}\overset{\simeq }{\rightarrow }H_{st}^1(K_p,V_p),
\end{equation}
as it follows from \cite[Lemma 2.1]{IS}.

Keeping the notation of §\ref{SMonodromy}, assume Conjecture \ref{Conjecture
equality of the L-invariants 1} and fix an isomorphism $\mathbf{D}
^{FM}\simeq \mathbf{D}$ of two-dimensional monodromy $\mathbb{T}_p$-modules
over $\mathbb{Q}_p$. The choice of this isomorphism induces an
identification 
\begin{equation}  \label{ident}
\frac{\mathbf{D}\otimes K_p}{\limfunc{Fil}^{m}(\mathbf{D}^{FM}\otimes K_p)}
= \frac{\mathbf{D}^{FM}\otimes K_p}{\limfunc{Fil}^{m}(\mathbf{D}\otimes K_p)}
\text{.}
\end{equation}

In view of \eqref{Conjectures Lemma 2} and \eqref{ident}, we may regard the
Darmon cohomology classes introduced above as cocycles 
\begin{equation}  \label{lcc}
s_{\Psi}\in H_{st}^1(K_p,V_p), \quad s_{\chi}\in H_{st}^1(K_p(\chi),V_p),
\end{equation}
for any optimal embedding $\Psi \in \mathrm{Emb}({\mathcal{O}},\mathcal{R})$
and any character $\chi: \func{Gal\,}(H_{\mathcal{O}}/K)\rightarrow \mathbb{%
C }^{\times }$, respectively.

Since $p$ does not divide the conductor of the $\mathbb{Z}$-order ${\mathcal{%
\ O}}_0$ and the ideal $p {\mathcal{O}}_K$ is principal, it splits
completely in the narrow ring class field $H_{{\mathcal{O}}}$. Choose and
fix once for all an embedding 
\begin{equation*}
\iota_p:H_{{\mathcal{O}}}\hookrightarrow K_p.
\end{equation*}

The choice of $\iota_p$ induces a a restriction morphism 
\begin{equation*}
\func{res}_p: \mathrm{MW}_{st}(H_{{\mathcal{O}}},V_p) {\longrightarrow }
H^1_{st}(K_p,V_p) \simeq \frac{\mathbf{D}^{FM}\otimes K_p}{\limfunc{Fil}%
^{m}( \mathbf{D}^{FM}\otimes K_p)} \simeq \frac{\mathbf{D}\otimes K_p}{%
\limfunc{Fil }^{m}(\mathbf{D}\otimes K_p)}
\end{equation*}
as in \eqref{Diagram Abel-Jacobi maps}.

The image of the global Mordell-Weil group $\mathrm{MW}_{\ast }(H_{{\mathcal{%
\ O}}},V_p)$ is a $\mathbb{T}_p$-submodule of $\frac{\mathbf{D}\otimes K_p}{ 
\limfunc{Fil}^{m}(\mathbf{D}\otimes K_p)}$. By Lemma \ref{Lemma
endomorphisms of a monodromy module} every automorphism of $\mathbf{D}$ acts
on $\mathbf{D/ }\limfunc{Fil}^{m}\mathbf{D}$ by multiplication by an element
in $\mathbb{T}_p$. It follows that the image of $\mathrm{MW}_{\ast }(H_{{\ 
\mathcal{O}}},V_p)$ in $\frac{\mathbf{D}\otimes K_p}{\limfunc{Fil}^{m}( 
\mathbf{D}\otimes K_p)}$ does not depend on the choice of the isomorphism $%
\mathbf{D}\simeq \mathbf{D}^{FM}$.

\begin{conjecture}
\label{Conjecture rationality}

\begin{enumerate}
\item For any optimal embedding $\Psi\in \mathrm{Emb}({\mathcal{O}},\mathcal{%
\ R})$ there is a global cohomology class $\underline{s}_{\Psi}\in \mathrm{MW%
} _{\ast }(H_{{\mathcal{O}}},V_p) $ such that 
\begin{equation*}
s_{\Psi}=\func{res}_p(\underline{s}_{\Psi})
\end{equation*}

\item For any $\Psi\in\overrightarrow{\mathrm{Emb}}_+({\mathcal{O}},\mathcal{%
\ R})$ and any ideal class $\mathfrak{a}\in {\mathrm{Pic}}({\mathcal{O}})$, 
\begin{equation*}
\func{res}_p({}^{\sigma}\underline{s}_{\Psi }) = s_{\mathfrak{a}\star \psi},
\end{equation*}
where $\sigma = \mathrm{rec}(\mathfrak{a})^{-1}\in \func{Gal\,}(H_{{\mathcal{%
\ O}}}/K)$.

\vspace{0.2cm}

\item For any character $\chi: \func{Gal\,}(H_{\mathcal{O}}/K)\rightarrow 
\mathbb{C} ^{\times }$, $s_{\chi}=\func{res}_p(\underline{s}_{\chi})$ for
some $\underline{s}_{\chi}\in \mathrm{MW}^{\chi}(H_{\chi },V_p)$, where $%
H_{\chi }/K$ is the abelian sub-extension of $H_{\mathcal{O}}/K$ cut out by $%
\chi$, and $\mathrm{MW}^{\chi}(H_{\chi },V_p)$ stands for its $\chi$
-isotypical subspace.
\end{enumerate}
\end{conjecture}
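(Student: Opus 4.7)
The plan is to attack Conjecture \ref{Conjecture rationality} by reducing from the real quadratic setting to the classical imaginary quadratic setting via factorization of $L$-series through genus characters, since an unconditional proof in full generality appears out of reach with current technology.

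First I would concentrate on part (3) for a genus character $\chi$ of $\mathrm{Gal}(H_{\mathcal{O}}/K)$. Such a character factors through a biquadratic extension $K\cdot F/\mathbb{Q}$, with $F$ an auxiliary imaginary quadratic field, and the Rankin $L$-series factors as $L(f\otimes K,\chi,s) = L(f\otimes\chi_1,s)\cdot L(f\otimes\chi_2,s)$ for characters $\chi_i$ attached to imaginary quadratic fields whose Heegner hypothesis with respect to $f$ can be analyzed directly. The goal would then be to prove a $p$-adic Gross--Zagier formula expressing $s_\chi$ in terms of the first derivative of a two-variable $p$-adic $L$-function, and to match it with the analogous formula satisfied by classical Heegner cycles on the Kuga--Sato variety attached to the imaginary quadratic field, which by Nekov\'a\v{r}'s construction already lie in $\mathrm{MW}_{st}$. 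This is the approach announced in \cite{Se} and mirrors, in higher weight, the weight $2$ strategy of Bertolini--Darmon for Stark--Heegner points in the genus character case.

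Second, for parts (1) and (2), which are not accessible to this reduction, a plausible route is to lift along $p$-adic families. Placing $f$ in a Hida or Coleman family and passing to a weight $2$ specialization reduces the statement to Darmon's original rationality conjecture, from which partial results (proved under additional hypotheses in \cite{DG}, \cite{LRV}, \cite{LRV2}) could be interpolated back to higher weight through the $p$-adic Abel--Jacobi map $\Phi^{AJ}$ of Section \ref{S22}, using that both the local monodromy module $\mathbf{D}$ and the Darmon cycles $y_\Psi$ deform analytically in $k$. The Galois compatibility in (2) should then follow from matching, via Artin's reciprocity map $\mathrm{rec}$, the $\mathrm{Pic}(\mathcal{O})$-action on $\overrightarrow{\mathrm{Emb}}_{+}(\mathcal{O},\mathcal{R})$ given by $\mathfrak{a}\star\Psi$ with the arithmetic Galois action on the hypothetical global class; this identity is built into the construction of $s_{\mathfrak{a}\star\Psi}$ as a cap product with $y_{\mathfrak{a}\star\Psi}$, so it becomes tautological once rationality is granted.

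The hard part will be step (1): exhibiting a global Selmer class $\underline{s}_\Psi \in \mathrm{MW}_{st}(H_{\mathcal{O}},V_p)$ lifting the purely local class $s_\Psi$. Because $s_\Psi$ is manufactured by $p$-adic integration on $\mathcal{H}_p$, producing no cycle on a Kuga--Sato variety, there is no Abel--Jacobi source to invoke; one must either transport the question through a $p$-adic Jacquet--Langlands correspondence to a split quaternion algebra, where modular symbols supply the missing global input (the direction pursued in \cite{Se}), or devise a genuinely new global construction of cohomology classes over $H_{\mathcal{O}}$. Even granting Conjecture \ref{Conjecture equality of the L-invariants 1} and the identification \eqref{ident}, passing from a local semistable class to the image of the global Bloch--Kato Selmer map is a non-formal rationality statement of a depth comparable to the Birch--Swinnerton-Dyer and Bloch--Beilinson conjectures themselves, and so the conjecture is expected to remain open in general even after the first part of this paper is taken for granted.
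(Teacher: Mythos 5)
This statement is stated as a \emph{conjecture} in the paper, and the paper offers no proof of it.  The only logical content the paper supplies is the remark immediately following the statement, namely that item~(3) is a formal consequence of items~(1) and~(2): given $\underline{s}_{\Psi}$ and the reciprocity law of~(2), one simply forms $\underline{s}_{\chi}=\sum_{\sigma}\chi^{-1}(\sigma)\,{}^{\sigma}\underline{s}_{\Psi}$ and checks it lands in the $\chi$-isotypical piece and restricts to $s_{\chi}$.  Beyond that, the paper merely points to the forthcoming work~\cite{Se} for partial results on item~(3) (and on Conjecture~\ref{Conjecture non-trivial cases}) in the case of genus characters.  So there is no ``paper proof'' against which to compare your argument; your text is, correctly, a research roadmap rather than a proof, and you are right that the conjecture is not established in the present article even granting Conjecture~\ref{Conjecture equality of the L-invariants 1}.

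Your commentary is substantively in line with what the authors themselves say.  The genus-character strategy you describe for item~(3) --- factor $L(f\otimes K,\chi,s)$ as a product of two Rankin $L$-series, transport the problem to the imaginary quadratic setting where Heegner cycles and Nekov\'a\v{r}'s work apply, and match via a $p$-adic Gross--Zagier / exceptional-zero computation --- is precisely the approach announced for~\cite{Se} in \S\ref{A2}, and your emphasis on a $p$-adic Jacquet--Langlands transfer to the split case $N^-=1$ (where modular symbols are available) also matches the role that \S\ref{A2} assigns to~\cite{Se}.  Two small points are worth flagging.  First, you do not mention the one genuine logical reduction the paper records, namely that~(3) follows formally from~(1) and~(2); you instead attack~(3) directly, which is exactly the right move for the genus-character case but should be stated as a deliberate inversion of the paper's trivial implication.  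Second, your appeal to Hida/Coleman families and interpolation from weight~$2$ for items~(1) and~(2) goes beyond anything proposed in the paper; it is a reasonable heuristic, but the claim that ``both the local monodromy module $\mathbf{D}$ and the Darmon cycles $y_{\Psi}$ deform analytically in $k$'' is not something established here, so this part of the roadmap is more speculative than the genus-character part and should be labelled as such.
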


Notice that (3) is an immediate consequence of (1) and (2) above. Let $f\in
S_{k}(\Gamma _{0}(pN^{+}))^{p-new}$ be a $p$-new eigenform. By means of the $%
p$-adic Abel-Jacobi map $\Phi _{\left[ f\right] }^{AJ}$ introduced in 
\eqref{AJ on the
f-component}, we can specialize the above constructions to the $f$
-eigencomponent $V_{p}(f)$ of $V_{p}$. Conjecture \ref{Conjecture
rationality} then predicts the existence of global cohomology classes 
\begin{equation}
\underline{s}_{\Psi ,f}\in \mathrm{MW}(H_{{\mathcal{O}}},V_{p}(f))\text{ and 
}\underline{s}_{f}^{\chi }\in \mathrm{MW}(H_{\chi },V_{p}(f))^{\chi },
\label{sf}
\end{equation}
such that $\func{res}_{p}(\underline{s}_{\Psi ,f})=s_{\Psi ,f}$, $\func{res}
_{p}(\underline{s}_{f}^{\chi })=s_{\Psi ,f}^{\chi }$ and satisfying an
explicit reciprocity law as in Conjecture \ref{Conjecture rationality} (2).
The $p$-adic representation $V_{p}(f)$ canonically decomposes according to $%
L_{f,p}=\tbigoplus\nolimits_{\mathfrak{p}\mid p}L_{f,\mathfrak{p}}$, where $%
L_{f,\mathfrak{p}}$ denotes the completion of $L_{f}$ at the prime $%
\mathfrak{p}$ above $p$: 
\begin{equation*}
V_{p}(f)=\tbigoplus\nolimits_{\mathfrak{p}\mid p}V_{\mathfrak{p}}(f)\text{.}
\end{equation*}
Write $\underline{s}_{f,\mathfrak{p}}^{\chi }$ for the corresponding
conjectural cohomology class and write $s_{f,\mathfrak{p}}^{\chi }$ for the
one obtained from the Darmon cohomology classe $s_{f}^{\chi }$ (that can be
directly defined by means of $\Phi _{\left[ f\right] ,\mathfrak{p}}^{AJ}$).
In light of the results achieved in \cite{Ne} for classical Heegner cycles
in the imaginary quadratic setting, it seems reasonable to formulate the
following conjecture.

\begin{conjecture}
\label{Conjecture non-trivial cases} Assume $s_{f,\mathfrak{p}}^{\chi }\neq
0 $. Then 
\begin{equation*}
\mathrm{MW}(H_{\chi },V_{\mathfrak{p}}(f))^{\chi }=L_{f,\mathfrak{p}} 
\underline{s}_{f,\mathfrak{p}}^{\chi }\text{.}
\end{equation*}
\end{conjecture}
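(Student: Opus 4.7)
The conjecture is the exact analogue, in the real quadratic setting, of Nekov\'a\v{r}'s theorem for generalized Heegner cycles over imaginary quadratic fields \cite{Ne}, and the natural plan is to transport his Kolyvagin-style Euler system argument to Darmon cycles. Granting Conjecture \ref{Conjecture rationality}, the inclusion $L_{f,\mathfrak{p}}\underline{s}_{f,\mathfrak{p}}^{\chi}\subseteq \mathrm{MW}(H_{\chi},V_{\mathfrak{p}}(f))^{\chi}$ is automatic, so the entire content is the reverse bound: under the hypothesis $s_{f,\mathfrak{p}}^{\chi}\neq 0$, the $\chi$-isotypical part of the Bloch--Kato Selmer group containing $\mathrm{MW}(H_{\chi},V_{\mathfrak{p}}(f))^{\chi}$ should have dimension at most one over $L_{f,\mathfrak{p}}$.

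The first step is to promote the single Darmon cycle $y_{\Psi}$ of \S\ref{S31} to a whole Euler system. For each squarefree product $c$ of rational primes $\ell\nmid N$ that are inert in $K$, the construction applied to the $\mathbb{Z}[1/p]$-order of conductor $c$ yields a cycle $y_{\Psi_{c}}\in H_{1}(\Gamma ,\func{Div}(\mathcal{H}_{p})\otimes \mathbf{P}_{n})$ and, assuming the natural extension of Conjecture \ref{Conjecture rationality}, a global class $\underline{s}_{f,\mathfrak{p},\chi}(c)\in \mathrm{MW}(H_{\chi ,c},V_{\mathfrak{p}}(f))^{\chi}$ over the compositum $H_{\chi ,c}$ of $H_{\chi}$ with the ring class field of conductor $c$. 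One must then verify the usual Euler system identities: a norm relation
\begin{equation*}
\func{cor}_{H_{\chi ,c\ell }/H_{\chi ,c}}\bigl(\underline{s}_{f,\mathfrak{p},\chi }(c\ell )\bigr)=P_{\ell }(\Frob_{\ell })\cdot \underline{s}_{f,\mathfrak{p},\chi }(c),
\end{equation*}
with $P_{\ell }(X)$ the Euler factor of $L(f/K,s)$ at $\ell$, and a congruence expressing $\underline{s}_{f,\mathfrak{p},\chi }(c\ell )$ modulo $\ell$ in terms of a Frobenius lift of $\underline{s}_{f,\mathfrak{p},\chi }(c)$. Since these classes are the $f$-isotypic images of the $y_{\Psi_{c}}$ under the Hecke-equivariant Abel--Jacobi map $\Phi _{[f],\mathfrak{p}}^{AJ}$ of \S\ref{Subsubsection ES-construction}, these relations should be reducible to explicit combinatorial identities at the level of $H_{1}(\Gamma ,\func{Div}(\mathcal{H}_{p})\otimes \mathbf{P}_{n})$ arising from the double-coset description of the Hecke operators $T_{\ell}$ on the Bruhat--Tits tree, following the blueprint of \cite{BD,Gr,LRV} in weight $2$.

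With this Euler system in hand, the next step is to feed it into Kolyvagin's derivative construction with a Galois-stable $\mathcal{O}_{L_{f,\mathfrak{p}}}$-lattice $T_{\mathfrak{p}}(f)\subset V_{\mathfrak{p}}(f)$, producing cohomology classes in $H^{1}(H_{\chi },T_{\mathfrak{p}}(f)/\varpi ^{n})^{\chi}$ whose local ramification is prescribed at primes dividing $c$ and trivial elsewhere; global duality, applied via the self-duality $V_{\mathfrak{p}}(f)^{\vee }\simeq V_{\mathfrak{p}}(f)(k-1)$, then bounds the Selmer group by the $\varpi$-adic valuation of $\underline{s}_{f,\mathfrak{p}}^{\chi }$, with the requisite nonvanishing modulo $\varpi$ for infinitely many Kolyvagin primes deduced from $s_{f,\mathfrak{p}}^{\chi }\neq 0$ by Chebotarev. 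The main obstacle, and the reason the statement is phrased as a conjecture, is that the very existence of the global classes $\underline{s}_{f,\mathfrak{p},\chi }(c)$ hinges on Conjecture \ref{Conjecture rationality}, itself depending on Conjecture \ref{Conjecture equality of the L-invariants 1} and ultimately on a higher-weight $p$-adic Jacquet--Langlands correspondence \cite{Se}; more technically, the Euler-system norm compatibilities must be established purely at the level of the homology groups $H_{1}(\Gamma ,\func{Div}(\mathcal{H}_{p})\otimes \mathbf{P}_{n})$, lacking the moduli-theoretic shortcuts of the imaginary quadratic case, and one must additionally impose a big-image hypothesis on $\bar{\rho}_{f,\mathfrak{p}}$ to exclude CM and exceptional configurations.
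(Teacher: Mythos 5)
This statement is a \emph{conjecture} in the paper, not a theorem: the authors offer no proof, only the remark that it is motivated by analogy with Nekov\'a\v{r}'s theorem for Heegner cycles over imaginary quadratic fields. So there is no paper proof to compare your attempt against, and your write-up is correctly calibrated in tone --- it is a strategy outline, not a proof.

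That said, since you present it under the heading of a proof, it is worth being precise about where the genuine gaps lie, beyond the conditionality that you already flag. The argument you sketch assumes as input not only the global classes of Conjecture \ref{Conjecture rationality} (itself resting on Conjecture \ref{Conjecture equality of the L-invariants 1}), but also an entire Euler system of such classes indexed by auxiliary conductors $c$, together with the vertical norm relations
\[
\func{cor}_{H_{\chi ,c\ell }/H_{\chi ,c}}\bigl(\underline{s}_{f,\mathfrak{p},\chi }(c\ell )\bigr)=P_{\ell }(\Frob_{\ell })\cdot \underline{s}_{f,\mathfrak{p},\chi }(c)
\]
and the congruence relation at $\ell$. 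None of this is available: the paper constructs $y_{\Psi}$ purely locally, as a class in $H_{1}(\Gamma ,\func{Div}(\mathcal{H}_{p})\otimes \mathbf{P}_{n})$, and even the weight-two literature (\cite{Gr}, \cite{LRV}, \cite{LRV2}) does not supply a full system of norm-compatible \emph{global} classes together with the Kolyvagin-type local conditions at the auxiliary primes. Deriving the norm relation from the action of $T_{\ell}$ on the tree, as you propose, would at best give a relation among the \emph{local} Abel--Jacobi images; lifting that to the posited global classes is a further, independent leap. In addition, your Kolyvagin step silently invokes the local condition structure at primes $\ell\mid c$ (ramified exactly along the predicted lines), a self-duality pairing, and a big-image hypothesis on $\bar{\rho}_{f,\mathfrak{p}}$, none of which appear in the paper's formulation, and whose absence would change the shape of the conjecture (e.g.\ the equality would have to be weakened to an inequality of ranks, or stated after inverting a controlled set of primes). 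In short, your proposal is a sensible road map, but at the level of rigor expected of a proof, essentially every step --- global rationality, norm relations, local conditions, image hypotheses, Chebotarev nonvanishing --- remains to be established, which is precisely why the authors leave the statement as a conjecture.
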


Note that Conjecture \ref{Conjecture non-trivial cases} predicts that,
although the map $\func{res}_{p}$ above may not be injective, the global
cohomology class $\underline{s}_{f}^{\chi }$ is well-determined whenever $%
s_{f}^{\chi }\neq 0$. In particular, $\func{res}_{p}$ would induce an
isomorphism 
\begin{equation*}
\mathrm{MW}(H_{\chi },V_{\mathfrak{p}}(f))^{\chi }=L_{f,\mathfrak{p}} 
\underline{s}_{f,\mathfrak{p}}^{\chi }\overset{\simeq }{\rightarrow }L_{f, 
\mathfrak{p}}s_{f,\mathfrak{p}}^{\chi }\text{.}
\end{equation*}

It is also possible to formulate Gross-Zagier type conjectures for these
cycles, although a proof of them seems to be a long way off, as even their
classical counterparts for classical Heegner cycles remain completely open.

\begin{conjecture}
\label{Conjecture Gross-Zagier type} 
\begin{equation*}
\underline{s}_{f,\mathfrak{p}}^{\chi }\neq 0\text{ }\Longleftrightarrow 
\text{ }L^{\prime }(f/K,\chi ,k/2)\neq 0
\end{equation*}
and in particular 
\begin{equation*}
s_{f,\mathfrak{p}}^{\chi }\neq 0\text{ }\Rightarrow \text{ }L^{\prime
}(f/K,\chi ,k/2)\neq 0\text{.}
\end{equation*}
\end{conjecture}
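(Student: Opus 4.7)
The plan is to reduce this Gross-Zagier type statement to a $p$-adic analogue by constructing an auxiliary $p$-adic $L$-function and establishing a $p$-adic Gross-Zagier formula that expresses its first derivative at $s=k/2$ in terms of the Darmon cohomology class $s^{\chi}_{f,\mathfrak{p}}$. Concretely, the first step would be to construct an anticyclotomic $p$-adic $L$-function $L_{p}(f/K,\chi,s)$ over the tower of narrow ring class fields $H_{c}/K$, generalizing the Bertolini--Darmon construction from weight $2$ to the quaternionic weight $k\geq 4$ setting. Since we are in an exceptional zero situation (the prime $p$ exactly divides the level and the sign of the functional equation is $-1$), one expects $L_{p}(f/K,\chi,k/2)=0$ and a derivative formula of the form
\begin{equation*}
L_{p}^{\prime}(f/K,\chi,k/2) \;=\; C \cdot \langle s^{\chi}_{f,\mathfrak{p}},\, s^{\bar\chi}_{f,\mathfrak{p}}\rangle
\end{equation*}
for a pairing on $\mathbf{D}_{[f],\mathfrak{p}}\otimes K_p/F^{m}$ induced by the monodromy operator $N$, with $C$ an explicit nonzero constant involving the $\mathcal{L}$-invariant $\mathcal{L}_{[f],\mathfrak{p}}$ and local periods.

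Second, one would combine this with a complex interpolation/factorization result relating $L_{p}^{\prime}(f/K,\chi,k/2)$ to $L^{\prime}(f/K,\chi,k/2)$: at the central critical point, standard Iwasawa-theoretic considerations together with the exceptional zero philosophy of Mazur--Tate--Teitelbaum (in the form proved by Greenberg--Stevens for $k=2$ and its higher weight analogues) should yield a clean formula
\begin{equation*}
L_{p}^{\prime}(f/K,\chi,k/2) \;=\; \mathcal{L}_{[f],\mathfrak{p}}\cdot (\text{classical factor})\cdot L^{\prime}(f/K,\chi,k/2)
\end{equation*}
up to explicit Euler factors at primes dividing $N$. Granting these two ingredients, the nonvanishing of $s^{\chi}_{f,\mathfrak{p}}$ translates into nonvanishing of $L_{p}^{\prime}$ and hence of $L^{\prime}$, giving the forward implication in the conjecture; the reverse implication would then follow from Conjecture \ref{Conjecture non-trivial cases} and the expected faithfulness of the global-to-local restriction on the $\chi$-isotypical Mordell--Weil group.

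The main obstacle, and the reason this is only stated conjecturally, is the $p$-adic Gross--Zagier formula itself in this generality. In the imaginary quadratic weight $2$ situation the formula rests ultimately on either the modularity of the generating series of Heegner divisors or, for $p$-adic versions, on an explicit reciprocity law via Coleman integration on a Shimura curve; in the present real quadratic higher weight quaternionic setting neither tool is currently available, since the Darmon cycles $y_{\Psi}$ are not the image of algebraic cycles and there is no complex multiplication theory producing them. One would need as a prerequisite a proof of Conjecture \ref{Conjecture rationality}, which itself requires a $p$-adic Jacquet--Langlands correspondence compatible with the $p$-adic integration of \S\ref{S2} (cf.\,the reference to \cite{Se}) and an understanding of the Galois action on $\underline{s}_{\Psi}$. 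A realistic partial result, in the spirit of \cite{Se}, would be to prove the implication $s^{\chi}_{f,\mathfrak{p}}\neq 0\Rightarrow L^{\prime}(f/K,\chi,k/2)\neq 0$ for genus characters $\chi$, where $L(f/K,\chi,s)$ factors as a product of $L$-series of modular forms over $\mathbb{Q}$ and one may hope to replace the missing Gross--Zagier formula by a comparison with Bertolini--Darmon--Prasanna style formulas on quaternionic Shimura varieties attached to the genus character decomposition.
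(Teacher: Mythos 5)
This statement is a \emph{conjecture} in the paper, not a theorem, and the paper offers no proof of it. Indeed the paragraph immediately preceding the conjecture states that ``a proof of them seems to be a long way off, as even their classical counterparts for classical Heegner cycles remain completely open.'' Your proposal is therefore correctly calibrated: you do not claim to prove the statement but rather outline a program toward it, and you identify essentially the same obstacles that the authors have in mind (absence of a $p$-adic Gross--Zagier formula in the real quadratic, higher-weight, quaternionic setting; absence of CM theory producing the cycles $y_{\Psi}$ as images of algebraic cycles; dependence on Conjecture \ref{Conjecture rationality}, which in turn hinges on a $p$-adic Jacquet--Langlands correspondence compatible with the integration theory of \S\ref{S2}). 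The remark you make about genus characters matches the paper's own pointer to \cite{Se}, where Conjecture \ref{Conjecture rationality}(3) and Conjecture \ref{Conjecture non-trivial cases} are established for suitable genus characters.

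Two small corrections to the logic of your sketch are worth making. First, the reverse implication in the conjecture (that $L^{\prime}(f/K,\chi,k/2)\neq 0$ forces $\underline{s}_{f,\mathfrak{p}}^{\chi}\neq 0$) does not follow from Conjecture \ref{Conjecture non-trivial cases} together with ``faithfulness of the global-to-local restriction.'' Conjecture \ref{Conjecture non-trivial cases} gives an upper bound on the $\chi$-isotypical Mordell--Weil group when $s_{f,\mathfrak{p}}^{\chi}\neq 0$, not a lower bound when $L^{\prime}\neq 0$; both directions of the biconditional are genuinely part of the conjectural Gross--Zagier package and neither follows formally from the rationality statements. Moreover, the paper is careful to state only the forward implication $s_{f,\mathfrak{p}}^{\chi}\neq 0 \Rightarrow L^{\prime}\neq 0$ for the \emph{local} class, precisely because restriction to $K_p$ could in principle kill a nonzero global class, so ``faithfulness of $\func{res}_p$'' is itself a nontrivial conjectural input and not available. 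Second, the paper's own suggested evidence takes a slightly different shape from your Bertolini--Darmon--Prasanna comparison: for $k=2$ the authors cite \cite{LRV2}, where $s_{f}^{\chi}$ is replaced by its image in a group of connected components and $L^{\prime}(f/K,\chi,1)$ is replaced by the special value $L(f_0/K,\chi,1)$ of a form $f_0\in S_2(\Gamma_0(N^{+}))$; the mechanism there is closer to a Cerednik--Drinfeld/rigid-analytic specialization at $p$ than to a BDP-type $p$-adic interpolation in the imaginary quadratic direction.
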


Note that the second statement in the above conjecture makes sense even when
it is not known that there exists a global cohomology class $\underline{s}
_{f}^{\chi }$ inducing $s_{f}^{\chi }$ as predicted by Conjecture \ref%
{Conjecture rationality}. See \cite{LRV2} for a proof of an avatar of this
formula for Darmon points, where $k=2$, $s_{f}^{\chi }$ is replaced by its
image on a suitable group of connected components and $L^{\prime }(f/K,\chi
,1)$ is replaced by the (comparatively much simpler) special value $%
L(f_{0}/K,\chi ,1)$ of the $L$-function of an eigenform $f_{0}\in
S_{2}(\Gamma _{0}(N^{+}))$.

\section{Particular cases}

\label{A}

The circle ideas of this manuscript specialize, in the particular cases of $%
k=2$ or $N^-=1$, to scenarios which can be tackled by means of finer,
simpler methods, as we now describe.

For $k=2$ and any $N^-\geq 1$, the $p$-adic integration theory of §\ref{S2}
admits a much finer \emph{multiplicative} version, which allows to introduce 
$p$-adic Darmon points on Jacobians of Shimura curves, as envisaged by first
time by Darmon in \cite{Dar}, and completed later in \cite{Das}, \cite{Gr}, \cite{DG},
\cite{LRV} and \cite{LRV2}. We briefly recall these results in §\ref{A1}
below.

For $N^-=1$ and any $k\geq 2$, the presence of cuspidal points on the
classical modular curve $X_0(pN^+)$ allows for a re-interpretation of the
whole theory in terms of modular symbols. This was again first visioned by
Darmon in \cite{Dar}, for $k=2$.

In §\ref{A2} we develop the theory for $k>2$ in the language of modular symbols, in a way that shall be employed in the forthcoming work \cite{Se}. From this point of view, the $p$
-adic integration theory may be viewed as a lift of Orton's integration
theory \cite{Or}. By means of this approach, the work
\cite{Se} of the second author offers a nontrivial partial result towards the conjectures
posed in §\ref{S34} for $N^{-}=1$.

Below, we treat separately the cases $k=2$ and $N^-=1$. For the sake of
simplicity in the exposition, we leave aside the overlapping case $N^-=1$, $%
k=2$: this is the setting considered in the original paper \cite{Dar} of
Darmon, where both methods converge.

\subsection{The case $N^->1$ and $k=2$}

\label{A1}

Assume, only for this section, that $N^->1$ and $k=2$. Thus $m=n=0$. We
proved in Theorem \ref{Theorem L-invariant} that there is a surjective
homomorphism 
\begin{equation*}
\mathrm{pr}_c\cdot\Psi^{\func{ord}}\cdot\partial_2:H_{2}( \Gamma ,K_p)
\rightarrow (\mathbf{H}^c)^{\vee}
\end{equation*}
which yields an isomorphism when restricted to $H_{2}( \Gamma ,K_p)^c$.
Notice that, since $k=2$, $\mathbf{H}^{Eis}$ is not trivial. However, since $%
N^->1$, it follows from \eqref{Diagram Eichler-Shimura isomorphism} and
Lemma \ref{Lemma harmonic cocycles/cusp forms} that $\mathbf{H}^c\simeq H^1(
\Gamma _0(pN^+),K_p)^{p-new}$.

The theory developed in \cite{Gr}, \cite{DG} and \cite{LRV} shows that there is a
multiplicative refinement of the above, as we now briefly recall. Setting 
\begin{equation*}
T^{\star}(K_p):=\Hom( H_1( \Gamma _0(pN^+),\mathbb{Z} ) ^{p-new},K_p^{\times
}),
\end{equation*}
it is shown in \cite[§5]{LRV} that there is a Hecke-equivariant
multiplicative integration map 
\begin{equation*}
\Phi^0 :H_1( \Gamma ,\func{Div}^0( \mathcal{H}_p) ) \rightarrow
T^{\star}(K_p)
\end{equation*}
such that $\Phi^{\func{ord}}=\func{ord}\cdot \Phi^0$ and $\Phi^{\log}=\log
\cdot \Phi^0$, up to extending scalars from $\mathbb{Z}$ to $K_p $.
Similarly as in \eqref{partialp}, let 
\begin{equation*}
\partial_2^0 :H_{2}( \Gamma,\mathbb{Z}) \rightarrow H_1( \Gamma ,\func{Div}
^0( \mathcal{H}_p) )
\end{equation*}
denote the boundary morphism such that $\partial_2=\partial_2^0\otimes_{ 
\mathbb{Z} }K_p$ and one may define 
\begin{equation*}
L_0:=\func{Im}( \Phi^0 \cdot \partial_2^0 ) \subset T^{\star}(K_p)
\end{equation*}

It is shown in \cite[§6]{LRV} that $L_0$ is a lattice in $T^{\star}(\mathbb{%
\ \ Q }_p)$; hence, one may define the rigid analytic torus $J:=\frac{
T^{\star}( \mathbb{Q}_p)}{L_0}$ over $\mathbb{Q}_p$. There is a natural
action of the involution $W_{\infty}$ on $J$ which allows to split the torus 
$J\sim J^+\times J^-$ up to an isogeny of $2$-power degree. Both factors $%
J^+ $ and $J^-$ are in fact isogenous and the main results of \cite{DG} and \cite{LRV}
show that $J^+$ admits a Hecke-equivariant isogeny with $\mathrm{Jac}
(X^{N^-}_0(pN^+))^{p-new}$ over the quadratic unramified extension $K_p$ of $%
\mathbb{Q}_p$. This is achieved by proving that the ${\mathcal{L}}$
-invariants of $J^+$ and of $\mathrm{Jac}(X^{N^-}_0(pN^+))^{p-new}$, in the
sense of Tate-Morikawa's uniformization theory, are equal. This is a
particular instance of Conjecture \ref{Conjecture equality of the
L-invariants 1} above.

Let now $K$ be a real quadratic field in which $p$ is inert, so that $K_p$
is isomorphic to the completion of $K$ at $p$. Using the above results, it
is possible to attach a Darmon point $y_{\Psi }\in \mathrm{Jac}
(X^{N^-}_0(pN^+))^{p-new}(K_p)$ to each optimal embedding $\Psi:{\mathcal{O}}
\hookrightarrow \mathcal{R}$ as in §\ref{S31}; see \cite[§10]{Gr}, \cite{DG} and \cite[§
3]{LRV2} for full details and for the precise statement of the conjecture
which is the analogue of Conjecture \ref{Conjecture rationality}.

\subsection{The case $N^-=1$ and $k>2$.}

\label{A2}

Let $K_p/\mathbb{Q}_p$ be a complete field extension. Let $\Delta :=\limfunc{
Div}\mathbb{P}^1(\mathbb{Q}) $ and $\Delta ^0:= \limfunc{Div}^0\mathbb{P}^1( 
\mathbb{Q})$ be respectively the space of divisors and degree zero divisors
supported ot the cusps with coefficients in our working field $K_p$, so that 
\begin{equation}
0\rightarrow \Delta ^0\rightarrow \Delta \rightarrow K_p\rightarrow 0\text{.}
\label{Exact sequence degree zero divisors cusps}
\end{equation}

For any $K_p$-vector space $A$ endowed with an action by $G\subset \mathrm{\
GL }_{2}(\mathbb{Q})$ set $\mathcal{BS}(A):=\Hom(\Delta,A)$ and $\mathcal{MS}
(A):=\Hom( \Delta^0,A)$, endowed with the natural induced actions. Then
there is a canonical exact sequence 
\begin{equation}
0\rightarrow A\rightarrow \mathcal{BS}( A) \rightarrow \mathcal{MS}( A)
\rightarrow 0\text{.}  \label{Exact sequence boundary/modular symbolds}
\end{equation}
We also write $\mathcal{BS}_{G}(A):=\mathcal{BS}(A)^{G}$ and $\mathcal{MS}
_{G}(A):=\mathcal{MS}(A)^{G} $ to denote the $G$-invariants.

When $A=\mathcal{D}_{n}^{0}(\mathbb{P}^{1}(\mathbb{Q}_{p}),K_{p})^{b}$ and $%
A=\mathcal{C}_{har}(\mathbf{V}_{n}(K_{p}))$, the corresponding exact
sequences are connected by the morphisms induced by the morphism $r$
introduced in §\ref{S21}. Taking the long exact sequences induced in $\Gamma 
$-cohomology we find:

\begin{proposition}
\label{prop} There is a commutative diagram 
\begin{equation}
\begin{array}{ccc}
\mathcal{MS}_{\Gamma }( \mathcal{D}_n^0(\mathbb{P}^1(\mathbb{Q}_p),K_p)^{b})
& \overset{\delta }{\rightarrow } & H^1( \Gamma ,\mathcal{D}_n^0(\mathbb{P}
^1( \mathbb{Q}_p))^{b},K_p) \\ 
r\downarrow \text{ \ } &  & \text{ \ }\downarrow r \\ 
\mathcal{MS}_{\Gamma }(\mathcal{C}_{har}(\mathbf{V}_n(K_p))) & \overset{
\delta }{ \rightarrow } & H^1( \Gamma ,\mathcal{C}_{har}(\mathbf{V}_n(K_p)))%
\end{array}
\label{Diagram modular symbols/cocycles}
\end{equation}
where both vertical maps $r$ and the cuspidal part $\delta^c$ of the lower
horizontal map are isomorphisms.
\end{proposition}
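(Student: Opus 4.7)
The plan is to split the proposition into three independent assertions: (i) commutativity of the square, (ii) the two vertical arrows marked $r$ are isomorphisms, and (iii) the cuspidal part $\delta^c$ of the lower horizontal map is an isomorphism.

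For (i) the argument is pure functoriality. The morphism $r\colon \mathcal{D}_n^0(\mathbb{P}^1(\mathbb{Q}_p),K_p)^b \to \mathcal{C}_{har}(\mathbf{V}_n(K_p))$ coming from Theorem \ref{2parts}/\eqref{Morita} is $\Gamma$-equivariant; applying the covariant additive functors $\mathcal{BS}(-)=\Hom(\Delta,-)$ and $\mathcal{MS}(-)=\Hom(\Delta^0,-)$ produces a morphism between the two instances of the short exact sequence \eqref{Exact sequence boundary/modular symbolds}. The commutative square is then obtained from the naturality of the connecting homomorphism in the long exact sequence of $\Gamma$-cohomology. Assertion (ii) is immediate afterwards: the right-hand vertical $r$ is precisely the isomorphism of Theorem \ref{2parts}, while the left-hand $r$, being obtained by applying the additive functor $\Hom(\Delta^0,-)^\Gamma$ to an isomorphism of $\Gamma$-modules, is an isomorphism as well.

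The substance of the proposition lies in (iii). I would start from the long exact sequence in $\Gamma$-cohomology attached to the lower sequence \eqref{Exact sequence boundary/modular symbolds} with $A=\mathcal{C}_{har}(\mathbf{V}_n(K_p))$, which around $\delta$ reads
\begin{equation*}
\mathcal{BS}_\Gamma(A)\to \mathcal{MS}_\Gamma(A)\overset{\delta}{\to} H^1(\Gamma,A)\to H^1(\Gamma,\mathcal{BS}(A)).
\end{equation*}
Taking cuspidal parts --- an exact operation by Remark \ref{RmkEC}, assuming the decompositions exist on all terms, which they do by transport along $\mathscr{S}_{\mathcal{E}}$ of \eqref{shp} and Lemma \ref{Lemma harmonic cocycles/cusp forms} --- the claim reduces to the two vanishings $\mathcal{BS}_\Gamma(A)^c=0$ and $H^1(\Gamma,\mathcal{BS}(A))^c=0$. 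Decomposing $\Delta=\bigoplus_{[s]\in \Gamma\backslash \mathbb{P}^1(\mathbb{Q})} K_p[\Gamma/\Gamma_s]$ into cusp orbits, Shapiro's lemma yields $H^i(\Gamma,\mathcal{BS}(A))\simeq \bigoplus_{[s]} H^i(\Gamma_s, A)$ for $i=0,1$, each $\Gamma_s$ being the infinite cyclic group generated by a parabolic element stabilizing $s$.

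The main obstacle is precisely this vanishing of the cuspidal part of the cyclic-group cohomology of $A$ at parabolic stabilizers. My approach would be to transport the computation via $\mathscr{S}_{\mathcal{E}}$ once more so that it takes place in $H^i(\Gamma_0(pN^+),\mathbb{V}_n)$, where cusps are honest cusps (crucially because $N^-=1$), and to invoke the classical fact that for every $\ell\nmid pN^+$ the Hecke operator $T_\ell$ acts on the cohomology at a parabolic stabilizer through the Eisenstein eigenvalue $\ell^{k-1}+1$. Consequently $t_\ell=T_\ell-\ell^{k-1}-1$ annihilates both $H^0(\Gamma_s,A)$ and $H^1(\Gamma_s,A)$, so their cuspidal parts vanish by the very definition of the Eisenstein/Cuspidal decomposition; this produces at once the injectivity and the surjectivity of $\delta^c$.
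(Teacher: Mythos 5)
Your treatment of commutativity via naturality of the connecting homomorphism is correct and in fact cleaner than the paper's appeal to "tedious but elementary diagram-chasing." Your vanishing strategy for $\delta^c$ is also a genuinely different route from the paper's: the authors factor the Eichler--Shimura isomorphism through $\mathcal{MS}_{\Gamma_0(pN^+)}(\mathbf{V}_n(\mathbb{C}))^c$ and deduce that the relevant $\delta^c$ is an isomorphism by Hecke equivariance and multiplicity of the $p$-new part, while you reduce to vanishing of boundary cohomology at parabolic stabilizers. Your route is plausible and more purely cohomological, but it compresses several details: the nested coinductions (first $\mathcal{BS}$, then $\mathcal{C}_{har}\hookrightarrow\mathcal{C}_0$) need to be untangled compatibly with the Hecke action, and the passage from $\mathcal{C}_{har}$ to the $p$-new part of $\Gamma_0(pN^+)$-cohomology as in \eqref{ShapiroMS} must be done before (or in a way compatible with) splitting over cusp orbits. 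These are not fatal, but they are not "immediate" either.

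There is, however, a real gap in your argument for the left-hand vertical arrow. You claim it is "obtained by applying the additive functor $\Hom(\Delta^0,-)^\Gamma$ to an isomorphism of $\Gamma$-modules," but the map $r\colon \mathcal{D}_n^0(\mathbb{P}^1(\mathbb{Q}_p),K_p)^b\to\mathcal{C}_{har}(\mathbf{V}_n(K_p))$ is \emph{not} an isomorphism of $\Gamma$-modules: by \eqref{Morita} it is an isomorphism onto the proper submodule $\mathcal{C}_{har}^b(\mathbf{V}_n(K_p))$ of bounded harmonic cocycles, followed by the inclusion $\mathcal{C}_{har}^b\subset\mathcal{C}_{har}$. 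Theorem \ref{2parts} establishes that this inclusion becomes an isomorphism on $H^1(\Gamma,-)$, but the analogous fact for $\mathcal{MS}_\Gamma(-)=H^0(\Gamma,\Hom(\Delta^0,-))$ requires a separate boundedness argument in the spirit of Proposition \ref{Proposition boundness}. The paper flags this explicitly ("a similar but simpler argument ... see \cite[Proposition 2.8]{Se}"). Concretely one must show that every $\Gamma$-invariant $\Delta^0$-indexed family of harmonic cocycles automatically lands in the bounded subspace; this follows from the $H^0$ case of Proposition \ref{Proposition boundness} after identifying $\mathcal{MS}_\Gamma(\mathcal{C}_{har})$ with a piece of $\mathcal{MS}_{\Gamma_0(pN^+)}(\mathbf{V}_n(K_p))$ via \eqref{ShapiroMS}, but it is not a purely formal consequence of functoriality.
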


\begin{proof}
By Theorem \ref{2parts}, the right vertical arrow is an isomorphism.
Exploiting the isomorphisms 
\begin{equation}  \label{ShapiroMS}
\begin{matrix}
\mathcal{MS}_{\Gamma }( \mathcal{C}_{har}(\mathbf{V}_n(K_p))) & 
\hookrightarrow & \mathcal{MS}_{\Gamma }( \mathcal{C}_0(\mathcal{E},\mathbf{V%
}_n(K_p))) \\ 
\downarrow \simeq &  & \simeq \downarrow \\ 
\mathcal{MS}_{\Gamma_0(pN^+)}(\mathbf{V}_n(K_p))^{p-new} & \hookrightarrow & 
\mathcal{MS}_{\Gamma_0(pN^+)}(\mathbf{V}_n(K_p))%
\end{matrix}%
\end{equation}
provided by Shapiro's lemma as in \eqref{shp}, a similar but simpler
argument shows that the left vertical arrow is also an isomorphism (see \cite%
[Proposition 2.8]{Se} for details). As for the lower horizontal arrow, the
Eichler-Shimura isomorphism factors as the composition 
\begin{equation}
ES:S_{k}( \Gamma _0( pN^+) ) \otimes _{\mathbb{R} }\mathbb{C} \overset{
\simeq }{\rightarrow }\mathcal{MS}_{\Gamma_0(pN^+)}(\mathbf{V}_n(\mathbb{C}
))^{c}\overset{\delta ^{c}}{\rightarrow }H^1( \Gamma _0(pN^+),\mathbf{V}
_n)^{c}\text{.}  \label{Diagram factorization of ES}
\end{equation}
Here the morphism $\delta $ appearing in \eqref{Diagram factorization
of ES} is obtained from \eqref{Exact sequence boundary/modular
symbolds} with $A=\mathbf{V}_n(\mathbb{C})$ and $G=\Gamma _0( pN^+)$. It
follows from this description that the morphism $\delta ^{c}$ obtained from $%
\delta $ in \eqref{Diagram modular symbols/cocycles} is identified with the
morphism obtained from $\delta ^{c}$ in $( \text{\ref{Diagram factorization
of ES}}) $ by taking the $p$-new parts. Since $ES$ is an isomorphism, $%
\delta ^{c}$ in $( \text{\ref{Diagram factorization of ES}} ) $ is an
isomorphism and the $p$-new parts of the source and the target are
identified by the Hecke equivariance of $\delta ^{c}$; for this reason $%
\delta $ in \eqref{Diagram modular symbols/cocycles} induces an isomorphism
between the cuspidal parts.

Finally, the commutativity of the diagram follows from a rather tedious but
elementary diagram-chasing computation.
\end{proof}

Set $\mathbf{MS}(K_{p}):=\mathcal{MS}_{\Gamma }(\mathcal{C}_{har}(\mathbf{V}
_{n}(K_{p})))$. Since the boundary morphisms $\delta $ are Hecke equivariant
the induce an isomorphism $\delta ^{c}:\mathbf{MS}(K_{p})^{c}\overset{\simeq 
}{\rightarrow }\mathbf{H}(K_{p})^{c}$ between the cuspidal parts. There is a
commutative diagram 
\begin{equation}
\begin{array}{ccccc}
H_{2}(\Gamma ,\mathbf{P}_{n}(K_{p})) & \overset{\partial _{2}}{\rightarrow }
& H_{1}(\Gamma ,\func{Div}^{0}(\mathcal{H}_{p})(k_{p})\otimes \mathbf{P}
_{n}(K_{p})) & \overset{\Psi ^{\log },\Psi ^{\func{ord}}}{\longrightarrow }
& \mathbf{H}(K_{p})^{\vee } \\ 
\downarrow &  & \downarrow \text{ } &  & \text{ \ }\downarrow \delta ^{\vee }
\\ 
H_{1}(\Gamma ,\Delta ^{0}\otimes \mathbf{P}_{n}(K_{p})) & \overset{\partial
_{1}}{\rightarrow } & (\Delta ^{0}\otimes \limfunc{Div}\nolimits^{0}( 
\mathcal{H}_{p})(k_{p})\otimes \mathbf{P}_{n}(K_{p}))_{\Gamma } & \overset{
\Psi _{\mathcal{MS}}^{\log },\Psi _{\mathcal{MS}}^{\func{ord}}}{
\longrightarrow } & \mathbf{MS}(K_{p})^{\vee }\text{.}%
\end{array}
\label{Diagram comparing theories 1}
\end{equation}

Here, the morphisms $\Psi ^{\log },\Psi ^{\func{ord}}$ in the top row are
the ones already introduced in Section \ref{S22}. Similarly, the
corresponding maps $\Psi _{\mathcal{MS}}^{\log },\Psi _{\mathcal{MS}}^{\func{
}}$ in the lower row are obtained by performing the obvious formal
modifications in the definition of the pairings in Definition \ref%
{defpairings} and in \eqref{Psi}.

The connecting map $\partial_1$ arises from the long exact sequence in
homology associated to 
\eqref{Exact sequence degree zero divisors p-adic
upper halfplane}, tensored with $\Delta ^0\otimes \mathbf{P}_n(K_p)$. Quite
similarly, the first (second) vertical arrow is the connecting map arising
in the long exact sequence induced by the short exact sequence 
\eqref{Exact sequence degree zero divisors
cusps} tensored with $\mathbf{P}_n(K_p)$ (respectively, tensored with $%
\limfunc{ Div}\nolimits^0( \mathcal{H}_p)(k_p) \otimes \mathbf{P}_n(K_p)$).

By Theorem \ref{Theorem L-invariant}, $(\Psi ^{\func{ord}}\circ \partial
_{2})^{c}:H_{2}(\Gamma ,\mathbf{P}_{n}(K_{p}))^{c}\overset{\simeq }{{\
\longrightarrow }}(\mathbf{H}(K_{p})^{c})^{\vee }$ is an isomorphism. The
same circle of ideas appearing in the proof of this theorem, paying care to
the Eisenstein subspaces, shows that 
\begin{equation}
(\Psi _{\mathcal{MS}}^{\func{ord}}\circ \partial _{1})^{c}:H_{1}(\Gamma
,\Delta ^{0}\otimes \mathbf{P}_{n}(K_{p}))^{c}\overset{\simeq }{{\
\longrightarrow }}(\mathbf{MS}(K_{p})^{c})^{\vee }  \label{Analogue}
\end{equation}
is also an isomorphism.

It then follows from the isomorphism $\delta ^{c}:\mathbf{MS}(K_{p})^{c} 
\overset{\simeq }{\rightarrow }\mathbf{H}(K_{p})^{c}$ that the left vertical
arrow also induces an isomorphism $H_{2}(\Gamma ,\mathbf{P}
_{n}(K_{p}))^{c}\simeq H_{1}(\Gamma ,\Delta ^{0}\otimes \mathbf{P}
_{n}(K_{p}))^{c}$. This is helpful, because it allows to construct an $L$
-invariant ${\mathcal{L}}$, as the one already introduced in Definition \ref%
{Linv}, purely in terms of modular symbols, as we now explain.

Let as before $\mathrm{pr}_c:\mathbf{MS}(K_p)^{\vee} {\longrightarrow } ( 
\mathbf{MS}(K_p)^c)^{\vee}$ denote the natural projection and write $\Phi_{ 
\mathcal{MS} }^{\ast} = \mathrm{pr}_c\circ \Psi_{\mathcal{MS}}^{\ast}$ for
either $\ast = \log$ or ${\func{ord}}$. In light of \eqref{Analogue} and
reasoning exactly as in the proof of Corollary \ref{Corollary existence and
uniqueness of the L-invariant}, there exists a unique endomorphism ${\ 
\mathcal{L}}_{\mathcal{MS} } \in \func{End}_{\mathbb{T}_p}((\mathbf{MS}( 
\mathbb{Q}_p)^{c})^{\vee})$ such that 
\begin{equation}  \label{LMS}
\Phi_{\mathcal{MS}}^{\log }\circ \partial_1 ={\mathcal{L}}_{\mathcal{MS}}
\circ \Phi_{\mathcal{MS}}^{\func{ord}}\circ \partial_1 : H_1(\Gamma ,\Delta
^0\otimes \mathbf{P}_n(K_p)) \rightarrow (\mathbf{MS}(K_p)^{c})^{\vee}\text{%
. }
\end{equation}

The invariants ${\mathcal{L}}$ and ${\mathcal{L}}_{\mathcal{MS}}$ are
actually equal, as it follows from \eqref{Diagram comparing theories 1}, %
\eqref{Analogue} and the definition of the $L$-invariants. On the $f$
-isotypic component ${\mathcal{L}}_{\mathcal{MS}}$ specializes to the Orton $%
L$-invariant (see \cite{Or}). Hence they induce isomorphic monodromy
modules. Indeed, let $w_{\infty }\in \{\pm 1\}$ be a choice of a sign and
define a monodromy module 
\begin{equation}
\mathbf{D}_{\mathcal{MS}}=\mathbf{D}_{\mathcal{MS}}^{w_{\infty }}:=\mathbf{%
MS }(\mathbb{Q}_{p})^{c,\vee ,w_{\infty }}\oplus \mathbf{MS}(\mathbb{Q}
_{p})^{c,\vee ,w_{\infty }}  \label{DMS}
\end{equation}
over $\mathbb{Q}_{p}$ as in \eqref{monmod}, providing it with a structure of
filtered Frobenius module by formally replacing $\mathbf{H}$ by $\mathbf{MS}$
, and $\mathcal{L}$ by ${\mathcal{L}}_{\mathcal{MS}}$. It follows from the
discussion above and the explicit description of both monodromy modules that
the isomorphism $\delta ^{c}:\mathbf{MS}^{c}\overset{\simeq }{\rightarrow } 
\mathbf{H}^{c}$ induces an isomorphism 
\begin{equation}
\mathbf{D}\overset{\simeq }{{\longrightarrow }}\mathbf{D}_{\mathcal{MS}}.
\label{DD}
\end{equation}

\vspace{0.3cm}

Finally, we conclude this section by showing how the Darmon cycles that were
introduced in §\ref{S31} can also be recovered by means of the theory of
modular symbols when $N^{-}=1$; this point of view will be of fundamental
importance in \cite{Se}. As in \eqref{PhiOnto}, set 
\begin{equation*}
\Phi _{\mathcal{MS}}:=-\Phi _{\mathcal{MS}}^{\log }\oplus \Phi _{\mathcal{MS}
}^{{\func{ord}}}:(\Delta ^{0}\otimes \limfunc{Div}\nolimits^{0}(\mathcal{H}
_{p})\otimes \mathbf{P}_{n}(K_{p}))_{\Gamma }{\longrightarrow }\mathbf{D}
(\Delta ^{0}\otimes \limfunc{Div}\nolimits^{0}(\mathcal{H}
_{p})(k_{p})\otimes \mathbf{P}_{n}(K_{p}))_{\Gamma }.
\end{equation*}

As in Definition \ref{AJ1} and in \eqref{AJ2}, we would like to be able to
define a canonical morphism $\Phi^{AJ}_{\mathcal{MS}}: (\Delta^0\otimes 
\limfunc{Div}(\mathcal{H}_p)(k_p) \otimes \mathbf{P}_n(K_p)) _{\Gamma } {\
\longrightarrow } \mathbf{D}_{\mathcal{MS}}/F^{m}$ such that $\Phi^{AJ}_{ 
\mathcal{MS}}=\Phi \circ \eta$. There is however a slight complication here,
as $(\Delta^0\otimes \mathbf{P}_n(K_p)) _{\Gamma }$ is not trivial. The
reader may like to compare this situation with the one encountered in §\ref%
{S42}, where the counterpart of $(\Delta^0\otimes \mathbf{P}_n(K_p))
_{\Gamma }$ was $H^1(\Gamma,\mathbf{V}_n)$, which is trivial by Lemma \ref%
{Lemma H^1(gamma,V_n)=0}. This motivates the following

\begin{definition}
A $p$-adic Abel-Jacobi map with respect to $\Phi _{\mathcal{MS}}$ is a
morphism 
\begin{equation}
\Phi _{\mathcal{MS}}^{AJ}:(\Delta ^{0}\otimes \limfunc{Div}(\mathcal{H}
_{p})(k_{p})\otimes \mathbf{P}_{n}(K_{p}))_{\Gamma }{\longrightarrow } 
\mathbf{D}_{\mathcal{MS}}\otimes K_{p}/F^{m}
\end{equation}
such that $\Phi _{\mathcal{MS}}^{AJ}=\Phi \circ \eta $.
\end{definition}

Such morphisms exist, but they are not unique. Using now the notation
introduced in §\ref{S31}, there is a diagram 
\begin{equation}
\begin{array}{ccccc}
\Gamma \backslash \mathrm{Emb}({\mathcal{O}},\mathcal{R}) & \overset{y}{
\rightarrow } & H_{1}(\Gamma ,\limfunc{Div}(\mathcal{H}_{p})(k_{p})\otimes 
\mathbf{P}_{n}(K_{p})) & \overset{\Phi ^{AJ}}{\rightarrow } & \mathbf{D}
\otimes K_{p}/F^{m} \\ 
& y_{\mathcal{MS}}\searrow & \downarrow \text{ } &  & \downarrow \\ 
&  & (\Delta ^{0}\otimes \limfunc{Div}(\mathcal{H}_{p})\otimes \mathbf{P}
_{n}(K_{p}))_{\Gamma }. & \overset{\Phi_{\mathcal{MS}}^{AJ}}{\rightarrow }
& \mathbf{D}_{\mathcal{MS}}\otimes K_{p}/F^{m}%
\end{array}
\label{Diagram comparing theories 2}
\end{equation}

Here $y_{\mathcal{MS}}\left( \Psi \right)$ is defined to be the class of $\gamma _{\Psi
}x-x\otimes \tau _{\Psi }\otimes D_{K}^{-\frac{k-2}{4}}P_{\Psi }^{m}$, where
an arbitrary choice of $x\in \mathbb{P}^{1}\left( \mathbb{Q}\right) $ has
beeen fixed. The map $y_{\mathcal{MS}}$ is indeed well defined, as it easily
follows by arguing as in Lemma \ref{Lemma Darmon cycles}. Thus, along with the \emph{Darmon cohomology classes} $s_{[\Psi]}$ attached to $[\Psi]\in
\Gamma \backslash \mathrm{Emb}({\mathcal{O}},\mathcal{R})$ introduced in Definition \ref{defSH}, we can also define $s_{\mathcal{MS}}([\Psi]):=\Phi_{\mathcal{MS}}^{AJ}(y_{\mathcal{MS}}\left( \Psi \right))\in \mathbf{D}_{\mathcal{MS}}\otimes K_p/F^{m}$.

Although the triangle of in \eqref{Diagram comparing theories 2} is
commutative, we warn the reader that the square in 
\eqref{Diagram comparing
theories 2} may not be. This is due to the fact that an arbitrary choice of
a $p$-adic Abel-Jacobi map $\Phi _{\mathcal{MS}}^{AJ}$ has been made.
Fortunately, it can be shown that the image of $\Psi $ in $\mathbf{D}_{%
\mathcal{MS}}\otimes K_{p}/F^{m}$ does not depend on the choice of $\Phi _{%
\mathcal{MS}}^{AJ}$; see \cite[Proposition 2.22]{Se} for more details, where
it is proved that although the square in \eqref{Diagram comparing theories 2}
may not be commutative, the entire diagram is, i.e. 
\begin{equation}
\Phi ^{AJ}(y_{\Psi })=\Phi _{\mathcal{MS}}^{AJ}(y_{\mathcal{MS}}(\Psi )).
\label{eqSH}
\end{equation}

\end{document}